\documentclass{article}

\usepackage[english]{babel}

\usepackage[letterpaper,top=2cm,bottom=2cm,left=3cm,right=3cm,marginparwidth=1.75cm]{geometry}

\usepackage{amsmath}
\usepackage{graphicx}
\usepackage{authblk}
\usepackage[colorlinks=true, allcolors=blue]{hyperref}
\usepackage{comment}

\usepackage{amsmath, amsthm, amssymb, graphicx}
\usepackage{float}

\newtheorem{theorem}{Theorem}[section]
\newtheorem{lemma}[theorem]{Lemma}

\newtheorem{proposition}[theorem]{Proposition}
\theoremstyle{definition}
\newtheorem{definition}[theorem]{Definition}
\newtheorem{remark}[theorem]{Remark}
\newtheorem{corollary}[theorem]{Corollary}

\usepackage{tikz}
\usetikzlibrary{arrows.meta}
\usetikzlibrary{positioning,calc,shapes.geometric}

\title{$\partial$-invariant path generators for digraphs}
\author[1]{Zhenzhi Li}
\author[2]{Wujie Shen}
\affil[1]{School of Mathematical Sciences, East China Normal University,  \texttt{zzli@stu.ecnu.edu.cn}}
\affil[2]{Yau Mathematical Sciences Center and Department of Mathematics, Tsinghua University, \texttt{shenwj22@mails.tsinghua.edu.cn}}

\begin{document}
\maketitle

\begin{abstract}
We study the structure of the space $\Omega_3(G)$ of $\partial$-invariant 3-paths in a directed graph $G$. We prove that $\Omega_3(G)$ admits a basis consisting of trapezohedral paths $\tau_m$ ($m \ge 2$) and their merging images. Moreover, we provide an explicit construction of such a basis and, as a consequence, obtain an algorithm with time complexity $O(|V(G)|^5)$ for computing the dimension and a basis of $\Omega_3(G)$ for any finite digraph. %The proof is based on a structural analysis of minimal $\partial$-invariant clusters and a combinatorial construction using colored graphs.

\end{abstract}

\section{Introduction}

%GLMY理论介绍，这里应该多加几个文献
The concept of \emph{path homology} for digraphs, now commonly called GLMY homology, was introduced by Grigor’yan, Lin, Muranov, and Yau in \cite{GrigorYan2013homologies}. It provides an algebraic and topological framework for graph theory. The theory is closely analogous to classical homology theory in topology, incorporating key concepts such as chain complexes, homology, and Künneth formulas. For a more extensive bibliography, we refer the reader to \cite{grigoryan_hodge,GrigorYan2019homology,GrigorYan2014homotopy,GrigorYan2020path,GrigorYan2017kunneth}.

%GLMY理论在应用数学里的作用
The GLMY homology finds application across diverse fields within applied mathematics, including artificial intelligence (AI), material science, chemistry and biology (e.g. \cite{chen2023path,liu2023neighborhood,liu2023persistent}). A central problem and a primary computational challenge is to design fast algorithms for computing homology generators for large, finite digraphs. A mathematical study of the structure and generators of GLMY homology would be instrumental in addressing this problem.

%理论具体介绍以及研究进展。这里直接抄的别的文献还没改

The GLMY theory has advanced significantly in recent years. Recent developments include the application of discrete Morse theory to digraphs, a theory of covering digraphs linking path homology to group homology via Cayley digraphs, and structural analyses of path and cellular complexes under strongly regular conditions \cite{di2024path,lin2021discrete,tang2025minimal,tang2024cellular}. Ivanov–Pavutnitskiy \cite{ivanov2024simplicial} recast path homology into a simplicial framework, while Fu–Ivanov \cite{fu2024path} constructed explicit bases for path chain spaces without multisquares. Li–Muranov–Wu–Yau \cite{li2025singular} developed cubical and simplicial homology theories for digraphs and quivers. The theory also interfaces with analysis on graphs through path Laplacians and Hodge-type decompositions \cite{grigoryan_hodge,grigoryan2025eigenvalues}, making it relevant to the study of directed network dynamics. 

For a digraph $G$, it is known that $\dim \Omega_0(G)$ coincides with the number of vertices, $\dim \Omega_1(G)$ with the number of arrows, and $\dim \Omega_2(G)$ with the total number of triangles, squares, and twice the number of double arrows. Recall that $G$ is said to have \emph{no double-arrows} if $a\rightarrow b$ and $b\rightarrow a$ cannot hold simultaneously for any two vertices $a\neq b$, and \emph{no multisquares} if between any two vertices at directed distance two, there exist at most two distinct shortest directed paths. Under these conditions, Grigoryan established in \cite[Theorem 2.10]{grigoryan2022advances} an explicit basis for $\Omega_3(G)$ consisting of trapezohedrons and their images under digraph morphisms. In contrast, for digraphs that containing multisquares, the dimension of $\Omega_n(G)$ remains undetermined in general.

%引入我们的结论

In the present paper, we prove this without imposing the no double-arrow and no multisquare conditions, thereby extending the result. Moreover, we have an explicit algorithm for computing the dimension and constructing a basis of $\Omega_3(G)$, which answers Problem 2.11 and 2.12 in \cite{grigoryan2022advances} (see Section \ref{sec:prelim} for the relevant definitions):

\begin{theorem}\label{thm:main}
Let $G$ be a digraph. There exists a basis of $\Omega_3(G)$ consisting of trapezohedral paths $\tau_m$ with $m \ge 2$ and their merging images. Moreover, there is an algorithm with time complexity $O(|V|^5)$ for determining the dimension and a basis of $\Omega_3(G)$, where $|V|$ is the number of vertices of $G$.
\end{theorem}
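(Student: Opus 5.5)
The plan is to decompose $\Omega_3(G)$ into pieces indexed by pairs of endpoints, and then to describe each piece through a combinatorial model built from 2-paths. Every $\partial$-invariant 3-path is a combination of allowed elementary paths $e_{abcd}$. The boundary $\partial$ deletes interior vertices but keeps the endpoints $a,d$. Hence the condition $\partial\omega\in\mathcal A_2$ splits according to the pair $(a,d)$, and we get
\[
\Omega_3(G)=\bigoplus_{(a,d)}\Omega_3^{a,d}(G),
\]
with one caveat. The terms of $\partial e_{abcd}$ obtained by deleting an endpoint, namely $e_{bcd}$ and $e_{abc}$, are always allowed, so they impose no condition. The only constraints come from the terms $e_{acd}$ and $e_{abd}$, and these keep both endpoints. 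The caveat is the case $a=d$, i.e.\ a 3-path $a\to b\to c\to a$, which must be allowed for separately (double arrows give extra degenerate terms). So it suffices to fix $a,d$ and compute $\Omega_3^{a,d}$.

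For fixed $(a,d)$, I encode the data as a bipartite-type structure. Let $B=\{b: a\to b\}$ and $C=\{c: c\to d\}$, and let $H$ be the graph on $B\sqcup C$ with an edge $b\to c$ whenever $b\to c$ in $G$. A 3-path $\omega=\sum x_{bc}e_{abcd}$ is then a weighting of the edges of $H$. The condition that $\omega$ is $\partial$-invariant splits into two families of linear constraints. First, for each $c\in C$ with $a\not\to c$, we need $\sum_b x_{bc}=0$. Second, for each $b\in B$ with $b\not\to d$, we need $\sum_c x_{bc}=0$. (When $a\to c$ or $b\to d$, the corresponding term is allowed and gives no constraint.) I would colour the vertices of $H$ accordingly: vertices whose sum must vanish are ``constrained'', the others ``free''. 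Then $\Omega_3^{a,d}$ is the space of edge weightings on this coloured graph satisfying the vanishing conditions at constrained vertices. This space is exactly a cycle space relative to the free vertices. Working component by component, each connected component $K$ of $H$ contributes
\[
|E(K)|-|V_{\mathrm{con}}(K)|+\epsilon_K,
\]
where $\epsilon_K=1$ if all vertices of $K$ are constrained (the sum of all constraints is then dependent) and $\epsilon_K=0$ otherwise. A basis is given by even cycles in $H$ with alternating signs, together with paths between two free vertices, or from a free vertex back to itself.

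Next, I would identify these basis elements geometrically. An alternating cycle $b_1c_1b_2c_2\cdots b_mc_m$ in $H$ gives the 3-path $\sum_i (e_{ab_ic_id}-e_{ab_{i+1}c_id})$, which is precisely the image of the trapezohedron $\tau_m$ under a digraph map. When the vertices $b_i,c_i$ are not distinct, or when some of them collapse onto $a$ or $d$, the image is a merging image. A path ending at a free vertex $c$ with $a\to c$ corresponds to a degenerate trapezohedron in which some square is replaced by a triangle $a\to c\to d$ (or by a double arrow). I would have to check that these are also merging images of some $\tau_m$ with $m\ge2$, for instance with $\tau_2$ collapsing to the triangle- and square-type 3-paths. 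Combining over all components and all $(a,d)$ yields the basis.

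The main obstacle is this last identification. I must show that every element of the chosen cycle/path basis, including the short degenerate ones and those in the $a=d$ and double-arrow cases, is really the image of some $\tau_m$ under a merging map as defined in Section~\ref{sec:prelim}. This requires careful case analysis of which vertex identifications are permitted. For complexity, there are $O(|V|^2)$ pairs $(a,d)$. For each pair, $H$ has $O(|V|)$ vertices and $O(|V|^2)$ edges, and building a spanning forest and extracting fundamental cycles and paths costs $O(|V|^3)$, each basis element having length $O(|V|)$. This gives $O(|V|^5)$ overall.
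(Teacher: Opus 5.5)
Your reduction of $\Omega_3^{a,d}$ to the space of edge weightings of $H$ that vanish at constrained vertices is sound. It is a genuinely different route from the paper's, which classifies minimal clusters through the coloured graph $\Gamma$ and longest alternating paths. Your count $\sum_K(|E(K)|-|V_{\mathrm{con}}(K)|+\epsilon_K)$ also agrees with Theorem~\ref{thm:main2}.

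The gap is the step you flag yourself, and it is exactly the first assertion of the theorem. Cycles are handled by Lemma~\ref{lem:image0}. For the basis elements that are paths between free vertices, however, you only suggest ``$\tau_2$ collapsing''. That covers a single free--free edge (Lemma~\ref{lem:image1}) and nothing longer. The missing idea is to close such a path into the middle $2m'$-cycle of some $T_{m'}$, using auxiliary vertices whose images make every extra term irregular. Take the vertices $i'_k,j'_k$ of $T_{m'}$ as in Section~\ref{subsec1:structureomega3}, with source $\mapsto a$ and sink $\mapsto d$. Then:
\begin{itemize}
\item for a path $b_0c_0b_1\cdots c_mb_{m+1}$ between free vertices of $B$, use $T_{m+2}$ with $i'_k\mapsto b_k$, $j'_k\mapsto c_k$ and $j'_{m+1}\mapsto d$;
\item for a path $c_0b_1c_1\cdots b_mc_m$ between free vertices of $C$, use $T_{m+1}$ with $i'_0\mapsto a$;
\item for a path $b_0c_0b_1\cdots b_mc_m$ from a free vertex of $B$ to a free vertex of $C$, use $T_{m+2}$ with $i'_{m+1}\mapsto a$ and $j'_{m+1}\mapsto b_0$. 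The naive choice $j'_{m+1}\mapsto d$ fails unless $a\to d$ or $a=d$.
\end{itemize}
Freeness of the endpoints is precisely what makes each of these a digraph map. The extra terms, such as $e_{ab_0dd}$, $e_{aac_md}$ and $e_{ab_0b_0d}$, vanish in $\mathcal{R}_3$. These constructions are Lemmas~\ref{lem:image2} and~\ref{lem:image3}, with the $C$-to-$C$ case being the mirror of Lemma~\ref{lem:image2}. With them your argument is complete, and it even bypasses the classification of minimal clusters. The paper's longer route buys the stronger statement that every minimal $\partial$-invariant cluster has one of these forms (Theorem~\ref{thm:thm}).

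A smaller correction to your constraint rule: the copy of $c$ in $C$ is also free when $c=a$, and the copy of $b$ in $B$ is free when $b=d$. In those cases $e_{acd}$, resp.\ $e_{abd}$, is irregular and vanishes. This is where double arrows enter, not the case $a=d$, which needs no special treatment. With your rule as literally stated, the digraph $a\rightleftarrows d$ would give $\dim\Omega_3^{a,d}=0$, although $e_{adad}\in\Omega_3$.
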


Note that in this paper, we provide an \emph{explicit} construction of a basis for $\Omega_3$, in contrast to the recursive algorithms found in previous research (e.g. \cite{tang2025minimal}). This result is stated in Theorem \ref{thm:main2} in the beginning of Section \ref{sec:dimandbasis}.

The paper is structured as follows. In Section \ref{sec:prelim}, we recall the basic theory of GLMY homology and introduce the notations used throughout the paper, including trapezohedrons and merging images. Section \ref{sec:structureomega3} is devoted to the proof of the first statement in Theorem \ref{thm:main}. We begin in Section \ref{subsec1:structureomega3} with essential examples of merging images of trapezohedrons. Following the approach in \cite[Theorem 2.10]{grigoryan2022advances} together with some additional arguments, we complete the proof in Section \ref{subsec2:structureomega3}. In Section \ref{sec:dimandbasis}, we study the dimension and basis of $\Omega_3$. More specifically, Sections \ref{sec:basis1} through \ref{sec:basis4} provide a detailed analysis, and in Section \ref{sec:algorithm} we present an algorithm for constructing a basis and analyze its time complexity.

\subsection*{Acknowledgments}

We express our deep gratitude to Professors Yong Lin and Shing-Tung Yau for introducing the problem and unwavering support. We also thank Jiarui Feng, Jianhui Li and Haohang Zhang for their helpful comments and discussions, which improved the presentation.

\section{Preliminaries}\label{sec:prelim}

In this section, we give a brief introduction of GLMY theory and present some concepts and preliminary mathematical results that will be used in the subsequent sections. We refer the readers to \cite{grigoryan2022advances}.

\subsection{Notations about digraphs}

\begin{definition}[Digraphs]
A \textit{digraph} (directed graph) is a pair $G = (V, E)$ of a set $V=V(G)$ of vertices and $E=E(G) \subset \{V \times V \setminus \mathrm{diag}\}$ is a set of arrows (directed edges). If $(i, j) \in E$ then we write $i \to j$.
\end{definition}

\begin{definition}[Neighborhood]
For a digraph $G$ and a vertex $v \in V(G)$, define the \emph{out-neighborhood} $N^+(v)$ to be the set of vertices $w \in V(G)$ such that $v \rightarrow w$. Similarly, define the \emph{in-neighborhood} $N^-(v)$ to be the set of vertices $w \in V(G)$ such that $w \rightarrow v$.
\end{definition}

\begin{definition}[Induced subgraphs]
For a digraph $G$ and a subset $A \subseteq V(G)$, define the \emph{induced subgraph} $G[A]$ of $A$ to be the subgraph of $G$ with vertex set $A$ and edge set consisting of all edges in $E(G)$ that start and end in $A$. 
\end{definition}

Next, for disjoint subsets $A, B \subseteq V(G)$, we define the induced digraph from $A$ to $B$ as follows, this definition will be used in Section \ref{sec:dimandbasis}.

\begin{definition}[Induced digraph from $A$ to $B$]\label{def:ind}
For disjoint subsets $A, B \subseteq V(G)$, we define the induced digraph $\operatorname{Ind}(A,B)$ from $A$ to $B$ as follows: its vertex set is $A\cup B$ and its edge set consists of all edges of $G$ that start in $A$ and end in $B$.
\end{definition}

\subsection{Paths and the boundary operator $\partial$}

\begin{definition}[Elementary $p$-paths]
For any $p \geq 0$, an elementary $p$-path is any sequence $i_0, i_1, \dots, i_p$ of $p + 1$ vertices of $V$ (allowing repetitions). Fix a field $\mathbb{K}$ and denote by $\Lambda_p = \Lambda_p(V, \mathbb{K})$ the $\mathbb{K}-$linear space that consists of all formal $\mathbb{K}-$linear combinations of elementary $p$-paths in $V$. Any element of $\Lambda_p$ is called a $p$-path.
\end{definition}

An elementary $p$-path $i_0, \dots, i_p$ as an element of $\Lambda_p$ will be denoted by $e_{i_0 \dots i_p}$. For example, we have
$$\Lambda_0 = \langle e_i : i \in V \rangle, \quad \Lambda_1 = \langle e_{ij} : i, j \in V \rangle, \quad \Lambda_2 = \langle e_{ijk} : i, j, k \in V \rangle.$$

Here the angle brackets $\langle \cdot \rangle$ denote the linear span of the enclosed elements. Any $p$-path $u\in\Lambda_p$ can be written in a form 
$$u = \sum_{i_0, i_1, \dots, i_p \in V} u^{i_0 i_1 \dots i_p} e_{i_0 i_1 \dots i_p},$$
where $u^{i_0 i_1 \dots i_p} \in \mathbb{K}$.

\begin{definition}[Boundary operator]
For any $p \geq 1$ define a linear boundary operator $\partial : \Lambda_p \to \Lambda_{p-1}$ by
$$\partial e_{i_0 \dots i_p} = \sum_{q=0}^p (-1)^q e_{i_0 \dots \hat{i_q} \dots i_p},$$
where $\hat{i_q}$ indicates that the index $i_q$ is omitted. Set $\Lambda_{-1} = {0}$ and define $\partial : \Lambda_0 \to \Lambda_{-1}$ by $\partial = 0$.
\end{definition} 

\begin{definition}[Regular $p$-paths]
An elementary $p$-path $i_{0}\dots i_{p}$ is called \textit{regular} if $i_{k} \neq i_{k+1}$ for all $k = 0,\dots,p-1$. Otherwise, it is called \emph{irregular}. Denote by $\mathcal{I}_p$ the subspace of $\Lambda_p$ generated by all irregular $p$-paths $e_{i_0\dots i_p}$.
\end{definition}

If $e_{i_0 \dots i_p}$ is irregular, then $i_k = i_{k+1}$ for some $k$. In this case, $\partial \mathcal{I}_p \subset \mathcal{I}_{p-1}$. Hence, $\partial$ is well-defined on the quotient spaces $\mathcal{R}_p := \Lambda_p / \mathcal{I}_p$.
\begin{comment}
We have
$$\begin{aligned}
\partial e_{i_0 \dots i_p} &= \sum_{q=0}^p (-1)^q e_{i_0 \dots \hat{i_q} \dots i_p} \\
&=\sum_{q\ne k,k+1} (-1)^q e_{i_0 \dots \hat{i_q} \dots i_p} + (-1)^k e_{i_0 \dots i_{k-1} i_{k+1} i_{k+2} \dots i_p} + (-1)^{k+1} e_{i_0 \dots i_{k-1} i_k i_{k+2} \dots i_p} \\
&=\sum_{q\ne k,k+1} (-1)^q e_{i_0 \dots \hat{i_q} \dots i_p} \in \mathcal{I}_{p-1}.
\end{aligned}$$

Therefore, $\partial \mathcal{I}_p \subset \mathcal{I}_{p-1}$. 
\end{comment}
%Hence, $\partial$ is well-defined on the quotient spaces $\mathcal{R}_p := \Lambda_p / \mathcal{I}_p$.

\subsection{$\mathcal{A}_p$ and $\Omega_p$}

\begin{definition}[Allowed elementary $p$-paths]
Let $G = (V, E)$ be a digraph. An elementary $p$-path $i_0 \dots i_p$ on $V$ is called \textit{allowed} if $i_k \to i_{k+1}$ for any $k = 0, \dots, p-1$, and \textit{non-allowed} otherwise.
\end{definition}

Let $\mathcal{A}_p = \mathcal{A}_p(G)$ be $\mathbb{K}$-linear subspace of $\Lambda_p$ spanned by allowed elementary $p$-paths:
$$\mathcal{A}_p = \langle e_{i_0 \dots i_p} : i_0 \dots i_p \text{ is allowed} \rangle.$$
The elements of $\mathcal{A}_p$ are called \textit{allowed $p$-paths}. Since any allowed path is regular, we have $\mathcal{A}_p \subset \mathcal{R}_p$.

\begin{definition}[$\partial$-invariant $p$-paths]
Consider the following subspace $\Omega_p$ of $\mathcal{A}_p$:
$$\Omega_p \equiv \Omega_p(G) := \{ u \in \mathcal{A}_p : \partial u \in \mathcal{A}_{p-1} \}.$$
The elements of $\Omega_p$ are called $\partial$-invariant $p$-paths.
\end{definition}

\begin{remark}
    Recall that $\mathcal{R}_p := \Lambda_p / \mathcal{I}_p$. Consequently, an elementary path $e_{i_0\ldots i_p}$ vanishes in $\mathcal{R}_p$ whenever there exist two consecutive vertices $i_k$ and $i_{k+1}$ with $i_k = i_{k+1}$. For example, $\partial e_{abab} = e_{bab} - e_{aab} + e_{abb} - e_{aba} = e_{bab} - e_{aba}$, so $e_{abab} \in \Omega_3$.
\end{remark}

\subsection{Cluster basis in $\Omega_p$}

\begin{definition}[Cluster]
A $p$-path $\omega = \sum \omega^{i_0 \dots i_p} e_{i_0 \dots i_p}$ is called an $(a,b)$-\textit{cluster} if all the elementary paths $e_{i_0 \dots i_p}$ with non-zero values of $\omega^{i_0 \dots i_p}$ have $i_0 = a$ and $i_p = b$. A path $\omega$ is called a \textit{cluster} if it is an $(a,b)$-cluster for some $a,b\in V(G)$.
\end{definition}

\begin{lemma}\label{lem:cluster}
Any $\partial$-invariant $p$-path is a sum of $\partial$-invariant clusters.
\end{lemma}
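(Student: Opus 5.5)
The plan is to decompose a given $\omega\in\Omega_p$ according to the endpoints of its elementary paths, and then check that each piece is still $\partial$-invariant. Write
$$\omega=\sum_{a,b\in V}\omega_{ab},\qquad \omega_{ab}:=\sum_{i_1,\dots,i_{p-1}}\omega^{a i_1\dots i_{p-1} b}\,e_{a i_1\dots i_{p-1} b}.$$
Each $\omega_{ab}$ is an $(a,b)$-cluster, and it lies in $\mathcal{A}_p$ because it only uses elementary paths already occurring in $\omega$. It therefore remains to show $\partial\omega_{ab}\in\mathcal{A}_{p-1}$ for every pair $(a,b)$. The case $p=0$ is trivial, so I assume $p\ge 1$.

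The key step is to sort the terms of $\partial\omega_{ab}$ by type. Deleting an interior vertex $i_q$ with $0<q<p$ gives an elementary path that still starts at $a$ and ends at $b$. Deleting $i_0=a$ gives a path ending at $b$ whose first vertex is $i_1$, which is an out-neighbour of $a$. Deleting $i_p=b$ gives a path starting at $a$ whose last vertex is $i_{p-1}$. Every such face of a term of $\omega_{ab}$ is allowed, except possibly the interior ones, since removing an endpoint of an allowed path leaves an allowed path. Hence $\partial\omega_{ab}\equiv \partial^{\mathrm{int}}\omega_{ab}\pmod{\mathcal{A}_{p-1}}$, where $\partial^{\mathrm{int}}$ is the alternating sum of the interior face deletions.

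Now I compare with $\partial\omega$, which lies in $\mathcal{A}_{p-1}$. Take the component of $\partial\omega$ supported on elementary $(p-1)$-paths starting at $a$ and ending at $b$. Interior deletions from $\omega_{a'b'}$ produce only $(a',b')$-paths, so among interior faces only those of $\omega_{ab}$ contribute. Endpoint deletions, however, can also land on $(a,b)$-paths. For example, deleting $i_0$ from a path $a' a\,\dots b$ gives an $(a,b)$-path, and such faces come from other clusters. These boundary contributions are the obstacle.

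I would handle the obstacle by observing that such endpoint faces are automatically allowed. Let $P_{ab}$ be the projection of $\Lambda_{p-1}$ (or $\mathcal{R}_{p-1}$) onto the span of $(a,b)$-elementary paths. This projection maps $\mathcal{A}_{p-1}$ into $\mathcal{A}_{p-1}$, since it just drops some basis vectors. We then have
$$P_{ab}\partial\omega=\partial^{\mathrm{int}}\omega_{ab}+(\text{endpoint faces of allowed paths}).$$
The left side is in $\mathcal{A}_{p-1}$, and so is the second term on the right. Hence $\partial^{\mathrm{int}}\omega_{ab}\in\mathcal{A}_{p-1}$, and therefore $\partial\omega_{ab}\in\mathcal{A}_{p-1}$.

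Some care is needed with irregular paths in $\mathcal{R}_{p-1}$. When $p=1$, the only faces are $e_b$ and $e_a$, both allowed. For $p\ge 1$, an irregular face is zero in $\mathcal{R}_{p-1}$, and the elementary-path basis of $\mathcal{R}_{p-1}$ (the regular paths) is still compatible with the $(a,b)$-grading. So the projection argument goes through verbatim in $\mathcal{R}_{p-1}$.
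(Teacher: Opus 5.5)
Your proof is correct and follows the same route as the paper. You split $\omega$ by endpoints into clusters $\omega_{ab}$ and observe that only the interior faces of $\omega_{ab}$ can be non-allowed, so the non-allowed terms must already cancel within $\partial\omega_{ab}$. Your projection $P_{ab}$ makes one point more rigorous than the paper's wording: endpoint faces from other clusters $\omega_{a'b'}$ can land on $(a,b)$-paths, but these are allowed, a fact the paper uses only implicitly.
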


\begin{proof}
This is \cite[Lemma 2.2]{grigoryan2022advances}. For the sake of completeness, we present the proof here. Let $\omega \in \Omega_p$. For any vertices $a, b \in V$, denote by $\omega_{a,b}$ the sum of all terms $\omega^{i_0 \dots i_p} e_{i_0 \dots i_p}$ with $i_0 = a$ and $i_p = b$.
Then $\omega_{a,b}$ is a cluster and $\omega = \sum_{a,b \in V} \omega_{a,b}$, that is, $\omega$ is a sum of clusters. 

Since $\omega$ is allowed, also all non-zero terms $\omega^{i_0 \dots i_p} e_{i_0 \dots i_p}$ are allowed, whence $\omega_{a,b}$ is also allowed. Let us prove that $\partial \omega_{a,b}$ is allowed, which will yield the $\partial$-invariance of $\omega_{a,b}$. The path $\omega_{a,b}$ is a linear combination of allowed paths of the form $e_{a i_1 \dots i_{p-1} b}$. We have
$$\partial e_{a i_1 \dots i_{p-1} b} = e_{i_1 \dots i_{p-1} b} + (-1)^p e_{a i_1 \dots i_{p-1}} + \sum_{k=1}^{p-1} (-1)^k e_{a i_1 \dots \hat{i_k} \dots i_{p-1} b}.$$

The terms $e_{i_1 \dots i_{p-1} b}$ and $e_{a i_1 \dots i_{p-1}}$ are clearly allowed, while among the terms $e_{a i_1 \dots \hat{i_k} \dots i_{p-1} b}$ there may be non-allowed. In the full expansion of
$$\partial \omega = \sum_{a,b \in V} \partial \omega_{a,b}$$
all non-allowed terms must be cancelled out by some terms in $\omega_{a,b}$. Indeed, since all the terms $e_{a i_1 \dots \hat{i_k} \dots i_{p-1} b}$ form a $(a,b)$-cluster, they cannot cancel with terms containing different values of $a$ or $b$. Therefore, they have to cancel already within $\partial \omega_{a,b}$, which implies that $\partial \omega_{a,b}$ is allowed.
\end{proof}

\begin{definition}
For any $p$-path $\omega = \sum \omega^{i_0 \dots i_p} e_{i_0 \dots i_p}$, define $e(\omega)$ to be the set of $p$-paths with nonzero coefficient $\omega^{i_0 \dots i_p}$, and define its \textit{width} $\|\omega\|$ to be the cardinality of $e(\omega)$.
\end{definition}

\begin{definition}
A $\partial$-invariant path $\omega$ is called \textit{minimal} if there is no nonempty proper subset of $e(\omega)$ whose nonzero linear combination is also $\partial$-invariant.
\end{definition}

\begin{lemma}\cite[Lemma 2.4]{grigoryan2022advances}\label{lem:minimalcluster}
Every $\partial$-invariant cluster is a sum of minimal $\partial$-invariant clusters.
\end{lemma}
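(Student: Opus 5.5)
We argue by strong induction on the width $\|\omega\|$ of a $\partial$-invariant cluster $\omega$. Fix an $(a,b)$-cluster $\omega\in\Omega_p$. If $\omega$ is minimal, there is nothing to prove; this covers in particular the base case $\|\omega\|=1$, since the only nonempty proper subset of $e(\omega)$ is then empty. So assume $\omega$ is not minimal.

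By definition there is then a nonempty proper subset $S\subsetneq e(\omega)$ and a nonzero linear combination $\eta=\sum_{s\in S}\eta^s e_s$ with $\eta\in\Omega_p$. We may assume every coefficient $\eta^s$ is nonzero, by shrinking $S$ if necessary, so that $e(\eta)\subseteq S$. Every elementary path in $S$ starts at $a$ and ends at $b$, so $\eta$ is again an $(a,b)$-cluster. Since $e(\eta)$ is a proper subset of $e(\omega)$, we have $\|\eta\|<\|\omega\|$.

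The key step is to subtract a suitable multiple of $\eta$ from $\omega$ so as to kill at least one term. Choose some $s_0\in e(\eta)$ and put
$$\lambda=\frac{\omega^{s_0}}{\eta^{s_0}}, \qquad \omega'=\omega-\lambda\eta.$$
The space $\Omega_p$ is a linear subspace, so $\omega'\in\Omega_p$, and $\omega'$ is still an $(a,b)$-cluster. Its support satisfies
$$e(\omega')\subseteq e(\omega)\setminus\{s_0\},$$
so $\|\omega'\|<\|\omega\|$. Note that $\omega'$ may be zero, in which case it is the empty sum.

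By the inductive hypothesis, both $\eta$ and $\omega'$ are sums of minimal $\partial$-invariant clusters. A scalar multiple of a minimal cluster is again minimal, because minimality depends only on the support $e(\cdot)$. Hence
$$\omega=\omega'+\lambda\eta$$
is a sum of minimal $\partial$-invariant clusters, which completes the induction.

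The only point needing care is to arrange that the width strictly decreases for both pieces. For $\eta$ this holds because $S$ is a proper subset of $e(\omega)$; for $\omega'$ it holds because $\lambda$ is chosen to cancel the coefficient at $s_0$. Everything else is linear algebra, and no real obstacle is expected.
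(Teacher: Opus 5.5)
Your proof is correct. It follows the same width-descent scheme as the paper: the paper's ``continuing this procedure'' is your strong induction on $\|\omega\|$. The difference is in how you obtain the splitting step.

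The paper opens by taking a non-minimal cluster to be a sum $\omega=\sum_k\omega^{(k)}$ of $\partial$-invariant paths of strictly smaller width. That is Grigor'yan's formulation of non-minimality, not literally the support-based definition of minimality given in this paper. Because the $\omega^{(k)}$ may have terms outside $e(\omega)$, the paper then needs Lemma~\ref{lem:cluster} to replace them by clusters before recursing.

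Your cancellation step $\omega=(\omega-\lambda\eta)+\lambda\eta$ derives such a decomposition directly from the support-based definition. It is the same trick the paper uses later in Lemma~\ref{lem:minimal}. Since both pieces are supported in $e(\omega)$, they are automatically $(a,b)$-clusters, so Lemma~\ref{lem:cluster} is not needed at all. In this sense your version supplies the implication the paper's argument tacitly relies on: from the support-based notion of non-minimality to a decomposition into pieces of smaller width.

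One small point to state explicitly: $\lambda\neq 0$, because $s_0\in e(\omega)$. This is what justifies your claim that $\lambda$ times a minimal cluster is again minimal.
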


\begin{proof}
Let $\omega$ be a $\partial$-invariant cluster that is not minimal. Then we have $\omega = \sum_{k=1}^n \omega^{(k)},$ where each $\omega^{(k)}$ is a $\partial$-invariant path with $\|\omega^{(k)}\| < \|\omega\|$. By Lemma \ref{lem:cluster}, each $\omega^{(k)}$ is a sum of clusters $\omega^{(k)}_{a,b}$, and it is clear from the definition of $\omega^{(k)}_{a,b}$ that$\|\omega^{(k)}_{a,b}\| \leq \|\omega^{(k)}\|.$ Hence, we can replace each $\omega^{(k)}$ by $\sum_{a,b} \omega^{(k)}_{a,b}$ and, hence, assume without loss of generality that all terms $\omega^{(k)}$ are $\partial$-invariant clusters. If some $\omega^{(k)}$ in this sum is not minimal then we replace it further with a sum of $\partial$-invariant clusters with smaller widths. Continuing this procedure we obtain in the end a representation $\omega$ as a sum of minimal $\partial$-invariant clusters.
\end{proof}

\begin{proposition}\cite[Proposition 2.5]{grigoryan2022advances}\label{pro:cluster}
The space $\Omega_p$ has a basis that consists of minimal $\partial$-invariant clusters.
\end{proposition}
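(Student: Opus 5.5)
The argument reduces to a statement about a finite spanning set. By Lemma \ref{lem:cluster} together with Lemma \ref{lem:minimalcluster}, every element of $\Omega_p$ is a finite sum of minimal $\partial$-invariant clusters. So the set $\mathcal{M}$ of all minimal $\partial$-invariant clusters spans $\Omega_p$, and it remains to extract a basis from $\mathcal{M}$. Since $\Omega_p$ is a subspace of the finite-dimensional space $\mathcal{A}_p$ (for finite $G$), any spanning set contains a basis. Hence $\mathcal{M}$ contains a basis of $\Omega_p$, and that basis consists of minimal $\partial$-invariant clusters.

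The plan is as follows. First, take an arbitrary $\omega\in\Omega_p$. Second, apply Lemma \ref{lem:cluster} to write $\omega=\sum_{a,b}\omega_{a,b}$ with each $\omega_{a,b}$ a $\partial$-invariant cluster. Third, apply Lemma \ref{lem:minimalcluster} to each $\omega_{a,b}$ to write it as a sum of minimal $\partial$-invariant clusters. This shows that $\operatorname{span}\mathcal{M}=\Omega_p$. Finally, choose a maximal linearly independent subset of $\mathcal{M}$, greedily or by Zorn's lemma if one wants to allow infinite digraphs. Since a maximal linearly independent subset of a spanning set spans the same space, it is a basis.

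No real obstacle is expected. The only point to check is that the decomposition in Lemma \ref{lem:minimalcluster} terminates. This holds because widths are positive integers that strictly decrease at each step, and the proof of that lemma already handles this. One might also want the basis elements normalized, for instance with some coefficient equal to $1$. This is harmless, since nonzero scalar multiples of minimal clusters are again minimal clusters.
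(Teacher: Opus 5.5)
Your proposal is correct and is essentially the paper's proof: use Lemma~\ref{lem:cluster} and Lemma~\ref{lem:minimalcluster} to show that the set $\mathcal{M}$ of minimal $\partial$-invariant clusters spans $\Omega_p$, then take a maximal linearly independent subset of $\mathcal{M}$. You invoke Lemma~\ref{lem:cluster} explicitly before Lemma~\ref{lem:minimalcluster}, a step the paper's one-line proof leaves implicit, and your Zorn's lemma remark for infinite digraphs is a harmless refinement.
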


\begin{proof}
Let $\mathcal{M}$ denote the set of all minimal $\partial$-invariant clusters in $\Omega_p$. By Lemma \ref{lem:minimalcluster}, every element of $\Omega_p$ is a sum of elements of $\mathcal{M}$. Selecting a maximal linearly independent subset of $\mathcal{M}$, we obtain a basis in $\Omega_p$.
\end{proof}

\subsection{Merging map}

\begin{definition}[Digraph morphisms]
A mapping $f : X \to Y$ between the sets of vertices of $X$ and $Y$ called a \textit{digraph map (or morphism)} if for every directed edge $a \to b$ on $X$, we have either $f(a) \to f(b)$ or $f(a) = f(b)$ on $Y$. In other words, any arrow of $X$ under the mapping $f$ either goes to an arrow of $Y$ or collapses to a vertex of $Y$.
\end{definition}

Let $X$ be a directed graph whose vertex set is partitioned into disjoint subsets $A_1, A_2, \dots, A_n$. Define a directed graph $Y$ with vertices $a_1, a_2, \dots, a_n$. Define a map $f : X \to Y$ by $f(x) = a_i \text{ for all } x \in A_i.$ If
$$a_i \to a_j \text{ in } Y \quad \text{if and only if there exist } x \in A_i \text{ and } y \in A_j \text{ such that } x \to y \text{ in } X,$$ 
then the map $f$ is called a \textit{merging map}.

\medskip
Any digraph morphism $f : X \to Y$ induces a mapping $f_* : \Lambda_n(X) \to \Lambda_n(Y)$ as follows: first set $f_*(e_{i_0 \dots i_n}) = e_{f(i_0) \dots f(i_n)},$ and then extend $f_*$ by linearity to all of $\Lambda_n(X)$.

\begin{proposition}\label{pro:morphism}
Let $f : X \to Y$ be a digraph morphism. Then the induced mapping $f_* : \Lambda_n(X) \to \Lambda_n(Y)$ extends to a chain mapping $f_* : \Omega_n(X) \to \Omega_n(Y)$. 
\end{proposition}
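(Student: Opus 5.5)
The plan is to show that $f_*$ commutes with $\partial$ on $\Lambda_\bullet$, then check that $f_*$ respects the regular quotient and allowedness. Together these show that $f_*$ sends $\Omega_n(X)$ into $\Omega_n(Y)$.

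First, $f_*\partial = \partial f_*$ on $\Lambda_n(X)$. On an elementary path,
$$f_*\partial e_{i_0\dots i_n}=\sum_q(-1)^q e_{f(i_0)\dots\widehat{f(i_q)}\dots f(i_n)}=\partial e_{f(i_0)\dots f(i_n)},$$
and linearity gives the general case. Second, $f_*$ maps $\mathcal{I}_n(X)$ into $\mathcal{I}_n(Y)$: if $i_k=i_{k+1}$ then $f(i_k)=f(i_{k+1})$. Hence $f_*$ descends to a map $\mathcal{R}_n(X)\to\mathcal{R}_n(Y)$ that still commutes with $\partial$.

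Third, $f_*$ maps $\mathcal{A}_n(X)$ into $\mathcal{A}_n(Y)$ modulo irregular paths. Take an allowed path $i_0\to\dots\to i_n$. For each $k$, either $f(i_k)\to f(i_{k+1})$ or $f(i_k)=f(i_{k+1})$. In the first case for every $k$, the image is allowed. If equality occurs for some $k$, the image is irregular and is zero in $\mathcal{R}_n(Y)$. So $f_*(\mathcal{A}_n(X))\subset\mathcal{A}_n(Y)$ in $\mathcal{R}_n(Y)$.

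Now let $u\in\Omega_n(X)$. Then $f_*u\in\mathcal{A}_n(Y)$, and $\partial f_*u=f_*\partial u$. Since $\partial u\in\mathcal{A}_{n-1}(X)$, the third step gives $f_*\partial u\in\mathcal{A}_{n-1}(Y)$. So $f_*u\in\Omega_n(Y)$, and $f_*$ is a chain map because it commutes with $\partial$.

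The main subtlety is bookkeeping. Everything must be done in $\mathcal{R}_n$, not $\Lambda_n$, because collapsed arrows produce irregular terms that only vanish in the quotient. The cleanest route is to define $f_*$ on $\mathcal{R}_\bullet$ from the start, using the second step, and then run the argument above.
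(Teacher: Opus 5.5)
Your proposal is correct and follows the paper's own proof essentially step for step: $f_*$ preserves irregular paths and so descends to $\mathcal{R}_\bullet$; it sends each allowed elementary path to an allowed path or to an irregular one, which is zero in the quotient; and it commutes with $\partial$, so together these give $f_*(\Omega_n(X))\subset\Omega_n(Y)$ as a chain map. Your explicit check of $f_*\partial=\partial f_*$ on elementary paths fills in the step that the paper simply calls clear.
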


\begin{proof}
If $e_{i_0 \dots i_n}$ is irregular then $f_*(e_{i_0 \dots i_n})$ is also irregular. Therefore, $f_*$ maps the space $\mathcal{I}_n(X)$ of irregular paths on $X$ into $\mathcal{I}_n(Y)$. It follows that $f_*$ maps $\mathcal{R}_n(X) = \Lambda_n(X)/\mathcal{I}_n(X)$ into $\mathcal{R}_n(Y)$. If $e_{i_0 \dots i_n}$ is allowed then $i_k \to i_{k+1}$ for all $k$, which implies that either $f(i_k) \to f(i_{k+1})$ for all $k$ and, hence, $f_*(e_{i_0 \dots i_n})$ is also allowed, or $f(i_k) = f(i_{k+1})$ for some $k$ so that $f_*(e_{i_0 \dots i_n})$ is irregular, thus $f_*(e_{i_0 \dots i_n}) = 0$. Hence, $f_*$ maps the space $\mathcal{A}_n(X)$ of allowed paths into $\mathcal{A}_n(Y)$. Clearly, $f_*$ commutes with $\partial$, which implies that $f_*$ maps $\Omega_n(X)$ into $\Omega_n(Y)$ and $f_*$ is a chain mapping. 
\end{proof}

\subsection{Trapezohedron}

\begin{definition}
For any integer $m \geq 2$, define a \textit{trapezohedron} $T_m$ (cf. Fig.\ref{fig:T_m}) of order $m$ as follows: 

$T_m$ is a digraph of $2m + 2$ vertices
$$a, b, i_0, \dots, i_{m-1}, j_0, \dots, j_{m-1}$$
and $4m$ arrows
$$a \to i_k \to j_k \to b, \quad i_{k+1} \to j_k$$
for all $k = 0, \dots, m - 1 \mod m$.
\end{definition}

\begin{figure}[htbp]
    \centering
    \begin{tikzpicture}[>= {Stealth[scale=1.5]}, scale=0.9, font=\small]
    % ========== 参数定义 ==========
    \def\xR{4}                  % 长轴半径
    \def\yR{0.4}               % 短轴半径
    \def\jY{-3.2}             % 下椭圆中心y坐标（随扁度微调）
    
    % ========== 绘制虚线椭圆 ==========
    \draw[dashed] (0,0) ellipse [x radius=\xR, y radius=\yR];      % i椭圆
    \draw[dashed] (0,\jY) ellipse [x radius=\xR, y radius=\yR];   % j椭圆
    
    % ========== 节点坐标（椭圆下半部） ==========
    % i 行：左侧180°,210°,240°；右侧300°,330°,360°
    \coordinate (im1)  at ({\xR*cos(180)}, {\yR*sin(180)});
    \coordinate (i0)   at ({\xR*cos(220)}, {\yR*sin(220)});
    \coordinate (i1)   at ({\xR*cos(240)}, {\yR*sin(240)});
    \coordinate (ikm1) at ({\xR*cos(300)}, {\yR*sin(300)});
    \coordinate (ik)   at ({\xR*cos(320)}, {\yR*sin(320)});
    \coordinate (ikp1) at ({\xR*cos(360)}, {\yR*sin(360)});
    
    % j 行：对应角度，中心下移 \jY
    \coordinate (jm1)  at ({\xR*cos(180)}, {\jY+\yR*sin(180)});
    \coordinate (j0)   at ({\xR*cos(220)}, {\jY+\yR*sin(220)});
    \coordinate (j1)   at ({\xR*cos(240)}, {\jY+\yR*sin(240)});
    \coordinate (jkm1) at ({\xR*cos(300)}, {\jY+\yR*sin(300)});
    \coordinate (jk)   at ({\xR*cos(320)}, {\jY+\yR*sin(320)});
    \coordinate (jkp1) at ({\xR*cos(360)}, {\jY+\yR*sin(360)});
    
    % a 与 b
    \coordinate (A) at (0,2);
    \coordinate (B) at (0,\jY-2);   % 随下椭圆位置调整
    
    % ========== 绘制小黑点 ==========
    \foreach \p in {im1,i0,i1,ikm1,ik,ikp1, jm1,j0,j1,jkm1,jk,jkp1, A, B}
        \draw[fill=black] (\p) circle (2pt);
    
    % ========== 省略号 ==========
    \node at ({\xR*cos(270)}, {\yR*sin(270)-0.5}) {$\boldsymbol{\cdots}$};
    \node at ({\xR*cos(270)}, {\jY+\yR*sin(270)+0.4}) {$\boldsymbol{\cdots}$};
    
    % ========== 标签 ==========
    % i 点
    \node[label={[label distance=0.5mm]below left:$i_{m-1}$}] at (im1) {};
    \node[label={[label distance=0.5mm]below right:$i_0$}] at (i0)  {};
    \node[label={[label distance=0.5mm]below right:$i_1$}] at (i1)  {};
    \node[label={[label distance=0.5mm]below left:$i_{k-1}$}] at (ikm1) {};
    \node[label={[label distance=0.5mm]below left:$i_k$}] at (ik)   {};
    \node[label={[label distance=0.5mm]below right:$i_{k+1}$}] at (ikp1) {};
    % j 点
    \node[label={[label distance=0.5mm]above left:$j_{m-1}$}]  at (jm1)  {};
    \node[label={[label distance=0.5mm]above right:$j_0$}]  at (j0)  {};
    \node[label={[label distance=0.5mm]above right:$j_1$}]    at (j1)   {};
    \node[label={[label distance=0.5mm]above left:$j_{k-1}$}] at (jkm1) {};
    \node[label={[label distance=0.5mm]above left:$j_k$}]    at (jk)   {};
    \node[label={[label distance=0.5mm]above right:$j_{k+1}$}] at (jkp1) {};
    % a, b
    \node[label={[label distance=0.5mm]above:$a$}] at (A) {};
    \node[label={[label distance=0.5mm]below:$b$}] at (B) {};
    
    % ========== 箭头（全部使用直线，并加粗） ==========
    % a → 所有 i
    \foreach \i in {im1,i0,i1,ikm1,ik,ikp1}
        \draw[->, thick] (A) -- (\i);
    
    % 所有 j → b
    \foreach \j in {jm1,j0,j1,jkm1,jk,jkp1}
        \draw[->, thick] (\j) -- (B);
    
    % i → 同下标 j
    \draw[->, thick] (im1) -- (jm1);
    \draw[->, thick] (i0)  -- (j0);
    \draw[->, thick] (i1)  -- (j1);
    \draw[->, thick] (ikm1)-- (jkm1);
    \draw[->, thick] (ik)  -- (jk);
    \draw[->, thick] (ikp1)-- (jkp1);
    
    % 交叉边
    \draw[->, thick] (i0)  -- (jm1);
    \draw[->, thick] (i1)  -- (j0);
    \draw[->, thick] (ik)  -- (jkm1);
    \draw[->, thick] (ikp1) -- (jk);
    \end{tikzpicture}
    \caption{Schematic diagram of $T_m$}
    \label{fig:T_m}
\end{figure}
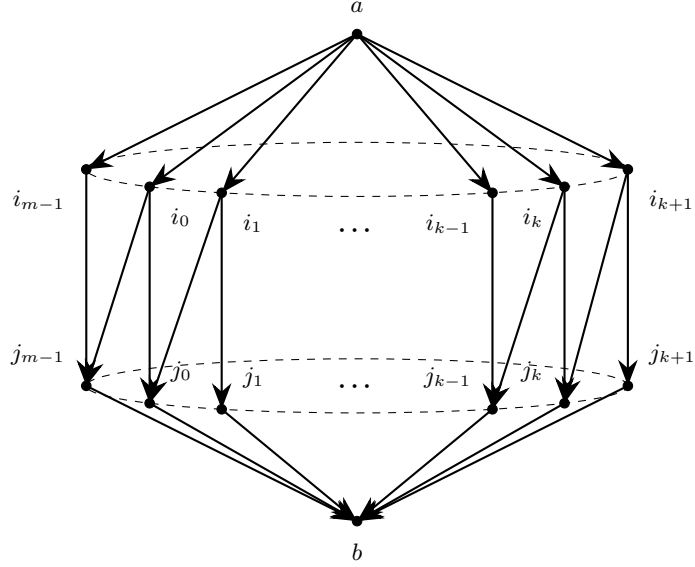

\begin{proposition}\label{pro:trapezohedron}
For the trapezohedron $T_m$, define 
$$\tau_m = \sum_{k=0}^{m-1} \bigl( e_{a i_k j_k b} - e_{a i_{k+1} j_k b} \bigr).$$
Then we have $\Omega_3(T_m) = \langle \tau_m \rangle$.
\end{proposition}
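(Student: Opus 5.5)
The plan is to check directly that $\tau_m\in\Omega_3(T_m)$, and then to show that every $\partial$-invariant 3-path on $T_m$ is a scalar multiple of $\tau_m$.

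\emph{Step 1: $\tau_m$ is $\partial$-invariant.} Each term $e_{a i_k j_k b}$ and $e_{a i_{k+1} j_k b}$ is allowed, so $\tau_m \in \mathcal{A}_3$. Expanding $\partial e_{a i j b} = e_{ijb} - e_{ajb} + e_{aib} - e_{aij}$, the terms $e_{ijb}$ and $e_{aij}$ are allowed. The possibly non-allowed terms are $e_{ajb}$ and $e_{aib}$, since $T_m$ has no arrows $a\to j_k$ or $i_k\to b$.
\begin{itemize}
\item The coefficient of $e_{a j_k b}$ in $\partial\tau_m$ is $-1+1=0$, coming from the two terms that pass through $j_k$.
\item The coefficient of $e_{a i_k b}$ is $1-1=0$, coming from $e_{a i_k j_k b}$ and $-e_{a i_k j_{k-1} b}$, with indices mod $m$.
\end{itemize}
Hence $\partial\tau_m\in\mathcal{A}_2$. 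It is nonzero because $m\ge 2$ makes all $2m$ elementary paths distinct.

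\emph{Step 2: list the allowed 3-paths.} An allowed 3-path in $T_m$ follows arrows, and the vertices are layered as $a$, then the $i$'s, then the $j$'s, then $b$. Since every arrow goes down exactly one layer, the only allowed 3-paths are $e_{a i j b}$ with $i\to j$. These are exactly $e_{a i_k j_k b}$ and $e_{a i_{k+1} j_k b}$, which gives $2m$ paths, all in one $(a,b)$-cluster. Write a general element as
$$u=\sum_k \bigl(x_k e_{a i_k j_k b} + y_k e_{a i_{k+1} j_k b}\bigr).$$

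\emph{Step 3: solve the invariance conditions.} The requirement $\partial u\in\mathcal{A}_2$ means the coefficients of the non-allowed paths $e_{a j_k b}$ and $e_{a i_k b}$ must vanish.
\begin{itemize}
\item From $e_{a j_k b}$ we get $x_k+y_k=0$.
\item From $e_{a i_k b}$ we get $x_k + y_{k-1}=0$.
\end{itemize}
Thus $y_k=-x_k$ and $x_k=x_{k-1}$ for all $k$ mod $m$, so $x_k$ is constant around the cycle and $u=x_0\tau_m$. This gives $\Omega_3(T_m)=\langle\tau_m\rangle$.

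The only delicate point is the completeness of the enumeration in Step 2, together with the cyclic index bookkeeping. In particular, the wrap-around $i_m=i_0$ is needed to close the chain of equalities. Everything else is a routine coefficient comparison.
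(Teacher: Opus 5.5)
Your proof is correct and follows the same route as the paper: you enumerate the $2m$ allowed 3-paths, write a general combination, and force the coefficients of the non-allowed terms $e_{a j_k b}$ and $e_{a i_k b}$ to vanish, which yields $y_k=-x_k$ and $x_k=x_{k-1}$ cyclically. Your explicit Step 1 check that $\tau_m\in\Omega_3$ and your layering argument for completeness of the enumeration are details the paper leaves implicit.
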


\begin{proof}
All allowed 3-paths in $T_m$ are as $e_{a i_k j_k b},  e_{a i_{k+1} j_k b},$ for all $k = 0, \dots, m - 1$(note that $i_m=i_0$). Let us find all linear combinations of these paths that are $\partial$-invariant. Consider such a linear combination
$$\omega = \sum_{k=0}^{m-1} \bigl( \alpha_k e_{a i_k j_k b} + \beta_k e_{a i_{k+1} j_k b} \bigr)$$
with coefficients $\alpha_k, \beta_k$, and assume that $\omega$ is $\partial$-invariant. We have

$$\begin{aligned}
\partial \omega =& \sum_{k=0}^{m-1} \partial \bigl( \alpha_k e_{a i_k j_k b} + \beta_k e_{a i_{k+1} j_k b} \bigr) \\
=& \sum_{k=0}^{m-1} \bigl( \alpha_k e_{i_k j_k b} + \beta_k e_{i_{k+1} j_k b} \bigr) - \sum_{k=0}^{m-1} \bigl( \alpha_k e_{a j_k b} + \beta_k e_{a j_k b} \bigr) \\
&+ \sum_{k=0}^{m-1} \bigl( \alpha_k e_{a i_k b} + \beta_k e_{a i_{k+1} b} \bigr) - \sum_{k=0}^{m-1} \bigl( \alpha_k e_{a i_k j_k} + \beta_k e_{a i_{k+1} j_k} \bigr).
\end{aligned}$$

The path $e_{a j_k b}$ is not allowed and, hence, must cancel out, which yields $\alpha_k = -\beta_k$. Therefore,
$$\sum_{k=0}^{m-1} \bigl( \alpha_k e_{a i_k b} + \beta_k e_{a i_{k+1} b} \bigr) = \sum_{k=0}^{m-1} \bigl( \alpha_k e_{a i_k b} - \alpha_k e_{a i_{k+1} b} \bigr) = \sum_{k=0}^{m-1} (\alpha_k - \alpha_{k-1}) e_{a i_k b},$$
and it must vanish as $e_{a i_k b}$ is not allowed, whence $\alpha_k = \alpha_{k-1}$. Setting $\alpha_k \equiv \alpha$ and, hence, $\beta_k = -\alpha$, we obtain that
$$\omega = \alpha \sum_{k=0}^{m-1} \bigl( e_{a i_k j_k b} - e_{a i_{k+1} j_k b} \bigr) = \alpha \tau_m$$
so that $\Omega_3(T_m) = \langle \tau_m \rangle$.
\end{proof}

\begin{comment}
\subsection{Graph}

\begin{definition}
A \textit{graph} is a pair $G=(V,E)$ of sets such that $E \subset [V]^2$; thus, the elements of $E$ are 2-element subsets of $V$. The elements of $V$ are the \textit{vertices} of the graph $G$, the elements of $E$ are its \textit{edges}.
\end{definition}

\begin{definition}
A \textit{path} is a non-empty graph $P = (V, E)$ of the form
$$V = \{x_0, x_1, \dots, x_k\}, \quad E = \{x_0x_1, x_1x_2, \dots, x_{k-1}x_k\},$$
where the $x_i$ are all distinct.
\end{definition}

We often refer to a path by the natural sequence of its vertices, writing, say, $P = x_0 x_1 \dots x_k$. 

If $P = x_0 \dots x_k$ is a path and $k \geq 3$, then the graph $C := P + x_k x_0$ is called a \textit{cycle}. As with paths, we often denote a cycle by its sequence of vertices; the above cycle $C$ might be written as $x_0 \dots x_k x_0$. 
\end{comment}

\section{Structure of $\Omega_3$}\label{sec:structureomega3}

This section generalizes \cite[Theorem 2.10]{grigoryan2022advances}. By a new argument, we remove the original assumptions that $G$ contains neither multisquares nor double arrows, thereby establishing the conclusion in a more general setting. We first classify the merging images of trapezohedrons in Section \ref{subsec1:structureomega3}, and then complete the proof of the main theorem in Section \ref{subsec2:structureomega3}.%这里还暂时没有写具体的创新点在哪里

\subsection{The merging images of trapezohedron}\label{subsec1:structureomega3}

\begin{lemma}\label{lem:image1}
Let $G$ be a directed graph. 
Suppose that the vertices $a, i, j, b$ (not necessarily pairwise distinct) satisfy
$$a \to i \to j \to b,\quad i \to b \; or \; i=b,\quad a \to j \; or \; a=j. $$
Then $\omega= e_{a i j b}$ satisfies $\omega \in \Omega_3$, and $\omega$ is a merging image of $\tau_2$.
\end{lemma}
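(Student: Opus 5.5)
Two things need to be shown: that $e_{aijb}\in\Omega_3$, and that it arises as $f_*(\tau_2)$ for some merging map $f\colon T_2\to G'$ with $G'\subseteq G$ carrying the appropriate arrows.

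\emph{$\partial$-invariance.} First I would check that $e_{aijb}$ is allowed and regular. This holds because $a\to i\to j\to b$ and arrows join distinct vertices. Next I would compute
$$\partial e_{aijb}=e_{ijb}-e_{ajb}+e_{aib}-e_{aij}.$$
The terms $e_{ijb}$ and $e_{aij}$ are allowed. For $e_{ajb}$: if $a\to j$, it is allowed; if $a=j$, it is irregular and vanishes in $\mathcal R_2$. Similarly, $e_{aib}$ is allowed if $i\to b$ and vanishes if $i=b$. Hence $\partial e_{aijb}\in\mathcal A_2$, so $e_{aijb}\in\Omega_3$.

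\emph{Merging image.} For $T_2$, with vertices $a,b,i_0,i_1,j_0,j_1$, we have
$$\tau_2=e_{ai_0j_0b}-e_{ai_1j_0b}+e_{ai_1j_1b}-e_{ai_0j_1b}.$$
The aim is to collapse three of these four terms while keeping $e_{ai_0j_0b}$ intact. Define $f$ by
$$a\mapsto a,\quad i_0\mapsto i,\quad j_0\mapsto j,\quad b\mapsto b,\quad i_1\mapsto b,\quad j_1\mapsto a.$$
Under this map:
\begin{itemize}
\item $e_{ai_1j_0b}\mapsto e_{abjb}$;
\item $e_{ai_1j_1b}\mapsto e_{abab}$;
\item $e_{ai_0j_1b}\mapsto e_{aiab}$.
\end{itemize}
These three images are not automatically irregular, so this naive choice fails. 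I would instead send $i_1\mapsto a$ and $j_1\mapsto b$. Then $e_{ai_1j_0b}$ and $e_{ai_1j_1b}$ map to paths beginning $aa\ldots$, and $e_{ai_0j_1b}$ maps to a path ending $\ldots bb$. All three are irregular and vanish, so $f_*\tau_2=e_{aijb}$.

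It remains to check that $f$ is a digraph morphism, i.e.\ that each arrow of $T_2$ goes to an arrow or collapses to a vertex:
\begin{itemize}
\item $a\to i_0\mapsto a\to i$, an arrow.
\item $a\to i_1\mapsto a=a$, collapsed.
\item $i_0\to j_0\mapsto i\to j$, an arrow.
\item $i_1\to j_1\mapsto a\to b$. This needs an arrow $a\to b$, which we do not have, so it is a problem.
\end{itemize}

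To repair this, I would use the cross arrows $i_1\to j_0$ and $i_0\to j_1$ together with the hypotheses. Send $i_1\mapsto j$, which is valid since $a\to j$ or $a=j$, and send $j_1\mapsto i$, which is valid since $i\to b$ or $i=b$. Checking the arrows of $T_2$:
\begin{itemize}
\item $a\to i_1\mapsto a\to j$, or collapsed when $a=j$.
\item $i_1\to j_0\mapsto j=j$, collapsed.
\item $i_1\to j_1\mapsto j\to i$. This is again problematic.
\end{itemize}

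So the final step is to resolve the single arrow $i_1\to j_1$. The right choice is to pick images for $i_1,j_1$ in $\{a,i,j,b\}$ so that every arrow of $T_2$ lands on $a\to i$, $i\to j$, $j\to b$, $a\to j$ or $i\to b$, or collapses, and so that the three unwanted terms become irregular.

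Try $i_1\mapsto i$ and $j_1\mapsto j$. The arrows of $T_2$ are then fine:
\begin{itemize}
\item $a\to i_1\mapsto a\to i$.
\item $i_1\to j_0\mapsto i\to j$.
\item $i_1\to j_1\mapsto i\to j$.
\item $i_0\to j_1\mapsto i\to j$.
\item $j_1\to b\mapsto j\to b$.
\end{itemize}
However, the four terms of $\tau_2$ then all map to $e_{aijb}$ with signs $+,-,+,-$, so $f_*\tau_2=0$. This choice fails too.

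I would therefore search systematically over the images of $i_1,j_1$, and possibly also allow $i_0$ or $j_0$ to move. The expected answer is $i_1\mapsto j$ and $j_1\mapsto i$, realized with $G'$ the image graph of this vertex map, which contains the arrow $j\to i$. That arrow need not lie in $G$, and the main obstacle is to justify that the "merging image" notion in this paper allows it. It does: the definition of a merging map places the arrows in $Y$ by fiat, with $a_i\to a_j$ whenever some preimage arrow exists. This gives $f_*\tau_2=e_{aijb}$ up to sign.

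To confirm this, compute the images of the four terms:
\begin{itemize}
\item $e_{ai_0j_0b}\mapsto e_{aijb}$;
\item $e_{ai_1j_0b}\mapsto e_{ajjb}=0$;
\item $e_{ai_1j_1b}\mapsto e_{ajib}$, which survives and is a problem;
\item $e_{ai_0j_1b}\mapsto e_{aiib}=0$.
\end{itemize}
So $e_{ajib}$ must also be killed. Modify the map to send $j_1\mapsto b$ when $i=b$, and $i_1\mapsto a$ when $a=j$, so that the corresponding terms become irregular. In the generic case where $i\neq b$ and $a\neq j$, I would first try $i_1\mapsto j$ with $j_1\mapsto j$; then $e_{ai_1j_1b}\mapsto e_{ajjb}=0$, and the arrow $i_1\to j_1$ collapses to $j=j$. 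Under this map:
\begin{itemize}
\item the arrow $i_0\to j_1$ goes to $i\to j$, an arrow;
\item the term $e_{ai_0j_1b}$ maps to $e_{aijb}$, which survives and would cancel the first term.
\end{itemize}
The cleanest fix is $j_1\mapsto b$ together with $i_1\mapsto j$. Checking the arrows of $T_2$:
\begin{itemize}
\item $i_1\to j_1\mapsto j\to b$;
\item $i_0\to j_1\mapsto i\to b$, or collapsed when $i=b$;
\item $j_1\to b$ collapses to $b=b$;
\item $a\to i_1\mapsto a\to j$, or collapsed when $a=j$;
\item $i_1\to j_0$ collapses to $j=j$.
\end{itemize}
Every arrow is an arrow of $G$ or collapses, so $f$ is a digraph morphism into $G$. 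Checking the terms of $\tau_2$:
\begin{itemize}
\item $e_{ai_0j_0b}\mapsto e_{aijb}$;
\item $e_{ai_1j_0b}\mapsto e_{ajjb}=0$;
\item $e_{ai_1j_1b}\mapsto e_{ajbb}=0$;
\item $e_{ai_0j_1b}\mapsto e_{aibb}=0$.
\end{itemize}
Hence $f_*\tau_2=e_{aijb}$, and this choice $f(i_1)=j$, $f(j_1)=b$ is the one I would present. Proposition~\ref{pro:morphism} then independently confirms that $e_{aijb}\in\Omega_3$.
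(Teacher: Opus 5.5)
Your final map ($a\mapsto a$, $b\mapsto b$, $i_0\mapsto i$, $j_0\mapsto j$, $i_1\mapsto j$, $j_1\mapsto b$) is exactly the paper's, and your checks that it is a digraph morphism and that $f_*\tau_2=e_{aijb}$ match the paper's proof. Your direct boundary computation is a valid extra argument, whereas the paper deduces $\partial$-invariance from Proposition~\ref{pro:morphism}. In a written version, drop the discarded attempts, in particular the claim that an arrow $j\to i$ outside $G$ would be acceptable, since Proposition~\ref{pro:morphism} needs the image to lie in $G$ itself.
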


\begin{figure}[htbp]
    \centering
    \begin{tikzpicture}[>= {Stealth[scale=1.5]}, scale=1.2, font=\small]

    % ------------------ 左图 ------------------
    \begin{scope}[local bounding box=left] % 为左图创建边界框，便于画箭头
    \coordinate (A)  at (0,2);    % a'
    \coordinate (i0) at (-1.2,1); % i'_0
    \coordinate (i1) at (1.2,1);  % i'_1
    \coordinate (j0) at (-1.2,-1);% j'_0
    \coordinate (j1) at (1.2,-1); % j'_1
    \coordinate (B)  at (0,-2);   % b'

    % ========== 绘制等大实心黑点 ==========
    \foreach \p in {A,i0,i1,j0,j1,B}
        \draw[fill=black] (\p) circle (2pt);

    % ========== 顶点标签（位置优化，避免重叠）==========
    \node[label={[label distance=0.5mm]above:{$a'$}}]      at (A)  {};
    \node[label={[label distance=0.5mm]left:{$i'_0$}}] at (i0) {};
    \node[label={[label distance=0.5mm]right:{$i'_1$}}]  at (i1) {};
    \node[label={[label distance=0.5mm]left:{$j'_0$}}] at (j0) {};
    \node[label={[label distance=0.5mm]right:{$j'_1$}}]  at (j1) {};
    \node[label={[label distance=0.5mm]below:{$b'$}}]      at (B)  {};

    % ========== 所有有向边（严格遵循给定关系）==========
    \draw[->, thick] (A)  -- (i0);
    \draw[->, thick] (A)  -- (i1);
    \draw[->, thick] (i0) -- (j0);
    \draw[->, thick] (i1) -- (j1);
    \draw[->, thick] (j0) -- (B);
    \draw[->, thick] (j1) -- (B);
    \draw[->, thick] (i1) -- (j0);   % 交叉边
    \draw[->, thick] (i0) -- (j1);   % 交叉边
    \end{scope}
    
    % ------------------ 右图（整体向右平移）------------------
    \begin{scope}[xshift=5cm, local bounding box=right]
    \coordinate (A)  at (0,2);    % a'
    \coordinate (i0) at (-1.2,1); % i'_0
    \coordinate (i1) at (1.2,1);  % i'_1
    \coordinate (j0) at (-1.2,-1);% j'_0
    \coordinate (j1) at (1.2,-1); % j'_1
    \coordinate (B)  at (0,-2);   % b'

    % ========== 绘制等大实心黑点 ==========
    \foreach \p in {A,i0,j0,B}
        \draw[fill=black] (\p) circle (2pt);

    % ========== 顶点标签（位置优化，避免重叠）==========
    \node[label={[label distance=0.5mm]above:{$a$}}]      at (A)  {};
    \node[label={[label distance=0.5mm]left:{$i$}}] at (i0) {};
    \node[label={[label distance=0.5mm]left:{$j$}}] at (j0) {};
    \node[label={[label distance=0.5mm]below:{$b$}}]      at (B)  {};
    
    % ========== 所有有向边（严格遵循给定关系）==========
    \draw[->, thick] (A)  -- (i0);
    \draw[->, thick] (A)  -- (j0);
    \draw[->, thick] (i0) -- (j0);
    \draw[->, thick] (j0) -- (B);
    \draw[->, thick] (i0) -- (B); 
    \end{scope}
    
    % ========== 中间大箭头：从左图指向右图 ==========
    \draw[->, ultra thick, line width=1.5pt] 
        ([xshift=+0.2cm]left.east)   % 从左图右侧边界稍右一点出发
        -- 
        ([xshift=-0.2cm]right.west) % 指向右图左侧边界稍左一点
        node[midway, above, font=\large] {$f$}; % 可加标签，也可不加
    \end{tikzpicture}
    \caption{Schematic diagram of $f : T_2 \to G$ in Lemma\ref{lem:image1}}
    \label{fig:image1}
\end{figure}
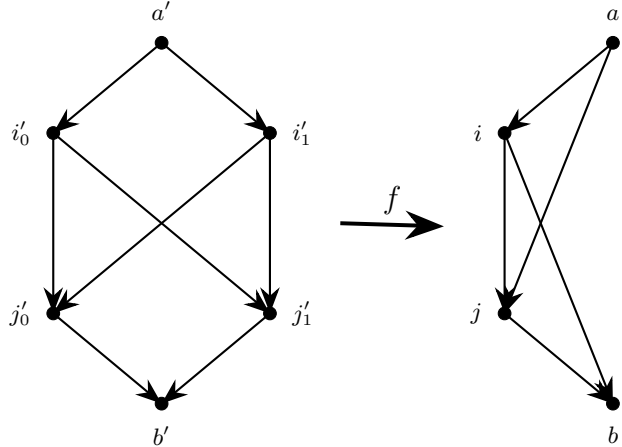

\begin{proof}
Let the vertices of the trapezohedron $T_2$ be $a', b', i'_0, i'_1, j'_0, j'_1$ satisfying 
$$a' \to i'_0 \to j'_0 \to b', \quad a' \to i'_1 \to j'_1 \to b', \quad i'_1 \to j'_0, \quad i'_0 \to j'_1.$$ 
Then 
$$\tau_2 = e_{a' i'_0 j'_0 b'} - e_{a' i'_1 j'_0 b'} +e_{a' i'_1 j'_1 b'} - e_{a' i'_0 j'_1 b'} .$$
Define a morphism $f : T_2 \to G$ by
$$f(a')=a, f(b')=b, f(i'_0)=i, f(j'_0)=j, f(i'_1)=j, f(j'_1)=b.$$
(cf. Fig.\ref{fig:image1}) Let $f_* : \mathcal{A}_3(T_2) \to \mathcal{A}_3(G)$ be the induced mapping, we have
$$\begin{aligned}
f_*(\tau_2) &= e_{a i j b} - e_{a j j b} +e_{a j b b} - e_{a i b b} = e_{a i j b} =\omega.
\end{aligned}$$
We have $\omega$ is a merging image of $\tau_2$ since $f(b') = f(j'_1) = b$. By Proposition \ref{pro:morphism}, it follows that $\omega \in \Omega_3$.
\end{proof}

\iffalse
Since the boundary of $\omega$ is
$$\partial e_{a i j b}
= e_{i j b} - e_{a j b} + e_{a i b} - e_{a i j} \in \mathcal{A}_2.$$
Hence $\omega \in \Omega_3$.
\fi

\begin{lemma}\label{lem:image0}
Let $G$ be a directed graph. 
Suppose that the vertices $a,\ \{i_k\}_{k=0}^{m-1},\ \{j_k\}_{k=0}^{m-1},\ b$ (not necessarily pairwise distinct) satisfy
$$a \to i_k \to j_k \to b,\quad i_{k+1} \to j_k,$$
for all $k = 0,1,\dots,m-1$, where the indices are taken modulo m.
Then
$$\omega= e_{a i_0 j_0 b} - e_{a i_1 j_0 b} + e_{a i_1 j_1 b} - \cdots + e_{a i_{m-1} j_{m-1} b} - e_{a i_0 j_{m-1} b}$$
satisfies $\omega \in \Omega_3$, and $\omega$ is $\tau_{m}$ or a merging image of $\tau_{m}$.
\end{lemma}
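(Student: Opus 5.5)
Let the vertices of $T_m$ be $a', b', i'_0,\dots,i'_{m-1}, j'_0,\dots,j'_{m-1}$, with arrows $a'\to i'_k\to j'_k\to b'$ and $i'_{k+1}\to j'_k$ (indices mod $m$). Define $f: V(T_m)\to V(G)$ by $f(a')=a$, $f(b')=b$, $f(i'_k)=i_k$ and $f(j'_k)=j_k$. The plan has three steps: show $f$ is a digraph morphism, compute $f_*(\tau_m)$, and then conclude with Proposition \ref{pro:morphism}.

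\textbf{Step 1: $f$ is a morphism.} The hypotheses $a\to i_k\to j_k\to b$ and $i_{k+1}\to j_k$ say exactly that every arrow of $T_m$ is sent to an arrow of $G$. In particular no arrow of $T_m$ collapses, so $f$ is a digraph morphism.

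\textbf{Step 2: $f_*(\tau_m)=\omega$.} Since $f$ is a morphism, Proposition \ref{pro:morphism} shows that $f_*$ maps $\Omega_3(T_m)$ into $\Omega_3(G)$. By Proposition \ref{pro:trapezohedron}, $\tau_m\in\Omega_3(T_m)$. Applying $f_*$ termwise,
$$f_*(\tau_m)=\sum_{k=0}^{m-1}\bigl(e_{a i_k j_k b}-e_{a i_{k+1} j_k b}\bigr),$$
which is precisely $\omega$. Hence $\omega\in\Omega_3(G)$. This already settles $\partial$-invariance, whether or not the listed vertices are distinct.

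\textbf{Step 3: $\tau_m$ versus a merging image.} If the $2m+2$ vertices $a,b,i_k,j_k$ are pairwise distinct, then $f$ identifies $T_m$ with a subgraph of $G$, and $\omega$ is $\tau_m$ sitting inside $G$. Otherwise $f$ identifies some vertices. Partition $V(T_m)$ into the fibres of $f$, and let $Y$ be the corresponding quotient digraph on the image vertices, with $f(x)\to f(y)$ in $Y$ exactly when some arrow $x\to y$ exists in $T_m$. Then $f$ factors as a merging map $T_m\to Y$ followed by the inclusion $Y\subseteq G$. Here $Y$ is a subgraph of $G$ because every arrow of $Y$ comes from an image arrow, and these lie in $G$ by Step 1. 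So $\omega$ is the merging image of $\tau_m$ under this merging map, read as a path in $G$, in the same sense as Lemma \ref{lem:image1}.

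\textbf{Main obstacle.} The only delicate point is how the merging is interpreted. Identified vertices joined by an arrow in $T_m$ would produce a loop, but the hypothesis $a\to i_k$ etc.\ rules this out: adjacent vertices in $T_m$ map to distinct vertices of $G$, since $E\subset V\times V\setminus \mathrm{diag}$. So $Y$ has no loops and the merging map is well defined. Some terms of $\omega$ may coincide or cancel after merging, but this does not matter: $\omega$ is by definition the image $f_*(\tau_m)$, and $\partial$-invariance follows from Proposition \ref{pro:morphism} regardless.
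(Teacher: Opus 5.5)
Your proposal is correct and follows the paper's proof: you use the same map $f\colon T_m\to G$ with $f(a')=a$, $f(b')=b$, $f(i'_k)=i_k$, $f(j'_k)=j_k$, compute $f_*(\tau_m)=\omega$, and appeal to Proposition~\ref{pro:morphism}. Your version is slightly more careful than the paper's: running the morphism argument in both cases also justifies $\omega\in\Omega_3$ when the vertices are pairwise distinct (where the paper just says $\omega$ is $\tau_m$), and factoring $f$ as a merging map onto the loop-free quotient $Y$ followed by the inclusion $Y\subseteq G$ makes ``merging image'' precise.
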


\begin{proof}
If $a,\ \{i_k\}_{k=0}^{m-1},\ \{j_k\}_{k=0}^{m-1},\ b$ are pairwise distinct, then $\omega$ is $\tau_m$. If $a,\ \{i_k\}_{k=0}^{m-1},\ \{j_k\}_{k=0}^{m-1},\ b$ are not pairwise distinct, let the vertices of the trapezohedron $T_m$ be $a', b', i'_0, \dots, i'_{m-1}, j'_0,  \dots, j'_{m-1}$
satisfying 
$$a' \to i'_k \to j'_k \to b', \quad i'_{k+1} \to j'_k$$ 
for all $k = 0, \dots, m-1 \pmod m$.
Then 
$$\tau_m = \sum_{k=0}^{m-1} \bigl( e_{a' i'_k j'_k b'} - e_{a' i'_{k+1} j'_k b'} \bigr).$$
Define a morphism $f : T_m \to G$ by
$$f(a')=a, f(b')=b, f(i'_k)=i_k, f(j'_k)=j_k$$
for all $k = 0, \dots, m-1$. Let $f_* : \mathcal{A}_3(T_m) \to \mathcal{A}_3(G)$ be the induced mapping, we have
$$f_*(\tau_m) = \sum_{k=0}^{m-1} \bigl( e_{a i_k j_k b} - e_{a i_{k+1} j_k b} \bigr) =\omega.$$
We have $\omega$ is a merging image of $\tau_m$ since $a,\ \{i_k\}_{k=0}^{m-1},\ \{j_k\}_{k=0}^{m-1},\ b$ are not pairwise distinct. By Proposition \ref{pro:morphism}, it follows that $\omega \in \Omega_3$.
\end{proof}

\iffalse
The boundary of $\omega$ is
$$\begin{aligned}
\partial \omega &= \sum_{k=0}^{m-1} (\partial e_{a i_k j_k b} - \partial e_{a i_{k+1} j_k b}) \\
&= \sum_{k=0}^{m-1} (e_{i_k j_k b} - e_{a j_k b} + e_{a i_k b} - e_{a i_k j_k} - e_{i_{k+1} j_k b} + e_{a j_k b} - e_{a i_{k+1} b} + e_{a i_{k+1} j_k} ) \\
&= \sum_{k=0}^{m-1} ( e_{i_k j_k b} - e_{a i_k j_k} - e_{i_{k+1} j_k b} + e_{a i_{k+1} j_k} ) \in \mathcal{A}_2.
\end{aligned}$$
Therefore, $\omega \in \Omega_3$.
\fi

\begin{lemma}\label{lem:image2}
Let $G$ be a directed graph. 
Suppose that the vertices $a,\ \{i_k\}_{k=0}^{m+1},\ \{j_k\}_{k=0}^m,\ b$ (not necessarily pairwise distinct) satisfy
$$a \to i_k \to j_k \to b, \quad i_{k+1} \to j_k,\quad i_0 \to b \; or \; i_0 = b,\quad i_{m+1} \to b \; or \; i_{m+1} = b,\quad a \to i_{m+1},$$
for all $k = 0,1,\dots,m$.
Then
$$\omega= e_{a i_0 j_0 b} - e_{a i_1 j_0 b} + e_{a i_1 j_1 b} - \cdots + e_{a i_m j_m b} - e_{a i_{m+1} j_m b}$$
satisfies $\omega \in \Omega_3$, and $\omega$ is a merging image of $\tau_{m+2}$.
\end{lemma}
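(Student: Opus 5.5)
The plan is to imitate the proof of Lemma \ref{lem:image1}, which is the case $m=0$ after relabeling: exhibit an explicit digraph morphism $f : T_{m+2} \to G$ with $f_*(\tau_{m+2}) = \omega$. Then Proposition \ref{pro:morphism} gives $\omega \in \Omega_3$. The idea is that the given vertices supply $m+1$ complete "squares" of a trapezohedron, and the missing last $j$-vertex is collapsed onto $b$.

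Label the vertices of $T_{m+2}$ as $a', b', i'_0, \dots, i'_{m+1}, j'_0, \dots, j'_{m+1}$, with arrows
$$a' \to i'_k \to j'_k \to b', \qquad i'_{k+1} \to j'_k,$$
where indices are taken modulo $m+2$. In particular $i'_0 \to j'_{m+1}$. Define $f$ by
$$f(a') = a, \quad f(b') = b, \quad f(i'_k) = i_k \ (0 \le k \le m+1), \quad f(j'_k) = j_k \ (0 \le k \le m), \quad f(j'_{m+1}) = b.$$

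Next I check that $f$ is a digraph morphism, going through the arrows of $T_{m+2}$ in turn:
\begin{itemize}
\item For $0 \le k \le m$, the arrows $a' \to i'_k$, $i'_k \to j'_k$, $j'_k \to b'$ and $i'_{k+1} \to j'_k$ go to arrows of $G$ by hypothesis.
\item The arrow $a' \to i'_{m+1}$ goes to $a \to i_{m+1}$, which is assumed.
\item The arrow $i'_{m+1} \to j'_{m+1}$ goes to $i_{m+1} \to b$ or collapses when $i_{m+1} = b$.
\item The wrap-around arrow $i'_0 \to j'_{m+1}$ goes to $i_0 \to b$ or collapses when $i_0 = b$.
\item The arrow $j'_{m+1} \to b'$ collapses to the single vertex $b$.
\end{itemize}
This is the only place where the extra hypotheses on $i_0$, $i_{m+1}$ are used.

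Then I compute $f_*(\tau_{m+2})$. The terms with $k \le m$ give
$$\sum_{k=0}^{m} \bigl( e_{a i_k j_k b} - e_{a i_{k+1} j_k b} \bigr),$$
which is exactly $\omega$. The remaining two terms, $e_{a i_{m+1} b b}$ and $-e_{a i_0 b b}$, are irregular and therefore vanish in $\mathcal{R}_3$. Hence $f_*(\tau_{m+2}) = \omega$, and Proposition \ref{pro:morphism} yields $\omega \in \Omega_3$.

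Finally, $\omega$ is a merging image rather than $\tau_{m+2}$ itself, since $f(j'_{m+1}) = f(b')$ identifies two distinct vertices. This is treated in the same sense as in Lemmas \ref{lem:image1} and \ref{lem:image0}. There is no real obstacle; the only point requiring care is the modular indexing, namely that the wrap-around arrow of $T_{m+2}$ is $i'_0 \to j'_{m+1}$, and that this is precisely what the hypothesis "$i_0 \to b$ or $i_0 = b$" handles.
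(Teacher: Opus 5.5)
Your proposal is correct and is essentially the paper's proof: it uses the same morphism $f : T_{m+2} \to G$ with $f(j'_{m+1}) = b$ and all other labels preserved, observes that the two wrap-around terms $e_{a i_{m+1} b b}$ and $-e_{a i_0 b b}$ are irregular and vanish, and concludes via Proposition \ref{pro:morphism}. Your arrow-by-arrow check that $f$ is a digraph morphism, including where $a \to i_{m+1}$ and the conditions on $i_0$ and $i_{m+1}$ enter, is a step the paper leaves implicit, so it only strengthens the argument.
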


\begin{figure}[htbp]
    \centering
    \begin{tikzpicture}[>= {Stealth[scale=1.5]}, scale=0.7, font=\small]
    % ========== 全局参数（左右图共用） ==========
    \def\xR{4}
    \def\yR{0.4}
    \def\jY{-3.2}
    
    % ------------------ 左图 ------------------
    \begin{scope}[local bounding box=left] % 为左图创建边界框，便于画箭头
        % 虚线椭圆
        \draw[dashed] (0,0) ellipse [x radius=\xR, y radius=\yR];
        \draw[dashed] (0,\jY) ellipse [x radius=\xR, y radius=\yR];
        
        % 节点坐标（与原来完全相同）
        \coordinate (im1)  at ({\xR*cos(180)}, {\yR*sin(180)});
        \coordinate (i0)   at ({\xR*cos(220)}, {\yR*sin(220)});
        \coordinate (i1)   at ({\xR*cos(240)}, {\yR*sin(240)});
        \coordinate (ikm1) at ({\xR*cos(300)}, {\yR*sin(300)});
        \coordinate (ik)   at ({\xR*cos(320)}, {\yR*sin(320)});
        \coordinate (ikp1) at ({\xR*cos(360)}, {\yR*sin(360)});
        
        \coordinate (jm1)  at ({\xR*cos(180)}, {\jY+\yR*sin(180)});
        \coordinate (j0)   at ({\xR*cos(220)}, {\jY+\yR*sin(220)});
        \coordinate (j1)   at ({\xR*cos(240)}, {\jY+\yR*sin(240)});
        \coordinate (jkm1) at ({\xR*cos(300)}, {\jY+\yR*sin(300)});
        \coordinate (jk)   at ({\xR*cos(320)}, {\jY+\yR*sin(320)});
        \coordinate (jkp1) at ({\xR*cos(360)}, {\jY+\yR*sin(360)});
        
        \coordinate (A) at (0,2);
        \coordinate (B) at (0,\jY-2);
        
        % 小黑点
        \foreach \p in {im1,i0,i1,ikm1,ik,ikp1, jm1,j0,j1,jkm1,jk,jkp1, A, B}
            \draw[fill=black] (\p) circle (2pt);
        
        % 省略号
        \node at ({\xR*cos(270)}, {\yR*sin(270)-0.5}) {$\boldsymbol{\cdots}$};
        \node at ({\xR*cos(270)}, {\jY+\yR*sin(270)+0.4}) {$\boldsymbol{\cdots}$};
        
        % 标签
        \node[label={[label distance=0.5mm]below left:$i'_{m}$}] at (im1) {};
        \node[label={[label distance=0.5mm]below:$i'_{m+1}$}] at (i0)  {};
        \node[label={[label distance=0.5mm]below right:$i'_0$}] at (i1)  {};
        \node[label={[label distance=0.5mm]below left:$i'_{k-1}$}] at (ikm1) {};
        \node[label={[label distance=0.5mm]below left:$i'_k$}] at (ik)   {};
        \node[label={[label distance=0.5mm]below right:$i'_{k+1}$}] at (ikp1) {};
        \node[label={[label distance=0.5mm]above left:$j'_{m}$}]  at (jm1)  {};
        \node[label={[label distance=1.2mm]above:$j'_{m+1}$}]  at (j0)  {};
        \node[label={[label distance=0.5mm]above right:$j'_0$}]    at (j1)   {};
        \node[label={[label distance=0.5mm]above left:$j'_{k-1}$}] at (jkm1) {};
        \node[label={[label distance=0.5mm]above left:$j'_k$}]    at (jk)   {};
        \node[label={[label distance=0.5mm]above right:$j'_{k+1}$}] at (jkp1) {};
        \node[label={[label distance=0.5mm]above:$a'$}] at (A) {};
        \node[label={[label distance=0.5mm]below:$b'$}] at (B) {};
        
        % 箭头（直线）
        \foreach \i in {im1,i0,i1,ikm1,ik,ikp1} \draw[->, thick] (A) -- (\i);
        \foreach \j in {jm1,j0,j1,jkm1,jk,jkp1} \draw[->, thick] (\j) -- (B);
        \draw[->, thick] (im1)--(jm1); \draw[->, thick] (i0)--(j0);
        \draw[->, thick] (i1)--(j1);   \draw[->, thick] (ikm1)--(jkm1);
        \draw[->, thick] (ik)--(jk);   \draw[->, thick] (ikp1)--(jkp1);
        \draw[->, thick] (i0)--(jm1);  \draw[->, thick] (i1)--(j0);
        \draw[->, thick] (ik)--(jkm1); \draw[->, thick] (ikp1)--(jk);
    \end{scope}
    
    % ------------------ 右图（整体向右平移12cm）------------------
    \begin{scope}[xshift=12cm, local bounding box=right] % 右移12cm
        % 内容与左图完全一致（直接复制）
        \draw[dashed] (0,0) ellipse [x radius=\xR, y radius=\yR];
        \draw[dashed] (0,\jY) ellipse [x radius=\xR, y radius=\yR];
        
        \coordinate (im1)  at ({\xR*cos(180)}, {\yR*sin(180)});
        \coordinate (i0)   at ({\xR*cos(220)}, {\yR*sin(220)});
        \coordinate (i1)   at ({\xR*cos(240)}, {\yR*sin(240)});
        \coordinate (ikm1) at ({\xR*cos(300)}, {\yR*sin(300)});
        \coordinate (ik)   at ({\xR*cos(320)}, {\yR*sin(320)});
        \coordinate (ikp1) at ({\xR*cos(360)}, {\yR*sin(360)});
        
        \coordinate (jm1)  at ({\xR*cos(180)}, {\jY+\yR*sin(180)});
        \coordinate (j0)   at ({\xR*cos(220)}, {\jY+\yR*sin(220)});
        \coordinate (j1)   at ({\xR*cos(240)}, {\jY+\yR*sin(240)});
        \coordinate (jkm1) at ({\xR*cos(300)}, {\jY+\yR*sin(300)});
        \coordinate (jk)   at ({\xR*cos(320)}, {\jY+\yR*sin(320)});
        \coordinate (jkp1) at ({\xR*cos(360)}, {\jY+\yR*sin(360)});
        
        \coordinate (A) at (0,2);
        \coordinate (B) at (0,\jY-2);
        
        \foreach \p in {im1,i0,i1,ikm1,ik,ikp1, jm1,j1,jkm1,jk,jkp1, A, B}
            \draw[fill=black] (\p) circle (2pt);
        
        \node at ({\xR*cos(270)}, {\yR*sin(270)-0.5}) {$\boldsymbol{\cdots}$};
        \node at ({\xR*cos(270)}, {\jY+\yR*sin(270)+0.4}) {$\boldsymbol{\cdots}$};
        
        \node[label={[label distance=0.5mm]below left:$i_{m}$}] at (im1) {};
        \node[label={[label distance=1.2mm]below:$i_{m+1}$}] at (i0)  {};
        \node[label={[label distance=0.5mm]below right:$i_0$}] at (i1)  {};
        \node[label={[label distance=0.5mm]below left:$i_{k-1}$}] at (ikm1) {};
        \node[label={[label distance=0.5mm]below left:$i_k$}] at (ik)   {};
        \node[label={[label distance=0.5mm]below right:$i_{k+1}$}] at (ikp1) {};
        \node[label={[label distance=0.5mm]above left:$j_{m}$}]  at (jm1)  {};
        \node[label={[label distance=0.5mm]above left:$j_0$}]    at (j1)   {};
        \node[label={[label distance=0.5mm]above left:$j_{k-1}$}] at (jkm1) {};
        \node[label={[label distance=0.5mm]above left:$j_k$}]    at (jk)   {};
        \node[label={[label distance=0.5mm]above right:$j_{k+1}$}] at (jkp1) {};
        \node[label={[label distance=0.5mm]above:$a$}] at (A) {};
        \node[label={[label distance=0.5mm]below:$b$}] at (B) {};
        
        \foreach \i in {im1,i0,i1,ikm1,ik,ikp1} \draw[->, thick] (A) -- (\i);
        \foreach \j in {jm1,j1,jkm1,jk,jkp1} \draw[->, thick] (\j) -- (B);
        \draw[->, thick] (im1)--(jm1); \draw[->, thick] (i0)--(B);
        \draw[->, thick] (i1)--(j1);   \draw[->, thick] (ikm1)--(jkm1);
        \draw[->, thick] (ik)--(jk);   \draw[->, thick] (ikp1)--(jkp1);
        \draw[->, thick] (i0)--(jm1);  \draw[->, thick] (i1)--(B);
        \draw[->, thick] (ik)--(jkm1); \draw[->, thick] (ikp1)--(jk);
    \end{scope}
    
    % ========== 中间大箭头：从左图指向右图 ==========
    \draw[->, ultra thick, line width=1.5pt] 
        ([xshift=-0.4cm]left.east)   % 从左图右侧边界稍右一点出发
        -- 
        ([xshift=0.4cm]right.west) % 指向右图左侧边界稍左一点
        node[midway, above, font=\large] {$f$}; % 可加标签，也可不加
    \end{tikzpicture}
    \caption{Schematic diagram of $f : T_{m+2} \to G$ in Lemma\ref{lem:image2}}
    \label{fig:image2}
\end{figure}

\begin{proof}
Let the vertices of the trapezohedron $T_{m+2}$ be $a', b', i'_0, \dots, i'_{m+1}, j'_0,  \dots, j'_{m+1}$
satisfying 
$$a' \to i'_k \to j'_k \to b', \quad i'_{k+1} \to j'_k$$ 
for all $k = 0, \dots, m+1 \pmod{m+2}$.
Then 
$$\tau_{m+2} = \sum_{k=0}^{m+1} \bigl( e_{a' i'_k j'_k b'} - e_{a' i'_{k+1} j'_k b'} \bigr).$$
Define a morphism $f : T_{m+2} \to G$ by
$$f(a')=a, f(b')=b, f(i'_k)=i_k, f(j'_k)=j_k, f(i'_{m+1})=i_{m+1}, f(j'_{m+1})=b$$
for all $k = 0, \dots, m$. (cf. Fig.\ref{fig:image2}) Let $f_* : \mathcal{A}_3(T_{m+2}) \to \mathcal{A}_3(G)$ be the induced mapping, we have
$$\begin{aligned}
f_*(\tau_{m+2}) &= \sum_{k=0}^m \bigl( e_{a i_k j_k b} - e_{a i_{k+1} j_k b} \bigr) + e_{a i_{m+1} b b} - e_{a i_0 b b}\\
&=\sum_{k=0}^m \bigl( e_{a i_k j_k b} - e_{a i_{k+1} j_k b} \bigr) =\omega.
\end{aligned}$$
We have $\omega$ is a merging image of $\tau_{m+2}$ since $f(b') = f(j'_{m+1}) = b$. Since $\tau_{m+2}\in \Omega_3(T_{m+2})$, by Proposition \ref{pro:morphism}, it follows that $\omega \in \Omega_3$.
\end{proof}

\iffalse
The boundary of $\omega$ is
$$\begin{aligned}
\partial \omega 
&= \sum_{k=0}^{m} (\partial e_{a i_k j_k b} - \partial e_{a i_{k+1} j_k b})\\
&= \sum_{k=0}^{m} (e_{i_k j_k b} - e_{a j_k b} + e_{a i_k b} - e_{a i_k j_k} - e_{i_{k+1} j_k b} + e_{a j_k b} - e_{a i_{k+1} b} + e_{a i_{k+1} j_k} ) \\
&= \sum_{k=0}^{m} ( e_{i_k j_k b} - e_{a i_k j_k} + e_{i_{k+1} j_k b} - e_{a i_{k+1} j_k} )+ e_{a i_0 b} - e_{a i_{m+1} b} \\
&\in \mathcal{A}_2.
\end{aligned}$$
Therefore, $\omega \in \Omega_3$.
\fi

\begin{lemma}\label{lem:image3}
Let $G$ be a directed graph. 
Suppose that the vertices $a,\ \{i_k\}_{k=0}^m,\ \{j_k\}_{k=0}^m,\ b$ (not necessarily pairwise distinct) satisfy
$$a \to i_k \to j_k \to b,\quad a \to i_m \to j_m \to b,\quad i_{k+1} \to j_k,\quad i_0 \to b \; or \; i_0 = b,\quad a \to j_m \; or \;a=j_m,$$
for all $k = 0,1,\dots,m-1$.
Then
$$\omega= e_{a i_0 j_0 b} - e_{a i_1 j_0 b} + e_{a i_1 j_1 b} - \cdots - e_{a i_m j_{m-1} b} + e_{a i_m j_m b}$$
satisfies $\omega \in \Omega_3$, and $\omega$ is a merging image of $\tau_{m+2}$.
\end{lemma}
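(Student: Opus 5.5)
The plan is to imitate the proofs of Lemma \ref{lem:image1} and Lemma \ref{lem:image2}: realize $\omega$ as $f_*(\tau_{m+2})$ for a suitable digraph morphism $f : T_{m+2} \to G$, and then invoke Proposition \ref{pro:morphism}. Label the vertices of $T_{m+2}$ as $a', b', i'_0, \dots, i'_{m+1}, j'_0, \dots, j'_{m+1}$, with arrows $a' \to i'_k \to j'_k \to b'$ and $i'_{k+1} \to j'_k$, indices taken modulo $m+2$. Then
$$\tau_{m+2} = \sum_{k=0}^{m+1} \bigl( e_{a' i'_k j'_k b'} - e_{a' i'_{k+1} j'_k b'} \bigr).$$
The terms with $k = 0, \dots, m$, together with $-e_{a' i'_{k+1} j'_k b'}$ for $k \le m-1$, are exactly the pattern of $\omega$. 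The leftover terms are
$$-e_{a' i'_{m+1} j'_m b'}, \qquad +e_{a' i'_{m+1} j'_{m+1} b'}, \qquad -e_{a' i'_0 j'_{m+1} b'},$$
and each of these must be sent to an irregular path.

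To achieve this, define $f$ by
$$f(a') = a,\quad f(b') = b,\quad f(i'_k) = i_k,\quad f(j'_k) = j_k \quad (0 \le k \le m),$$
together with
$$f(i'_{m+1}) = j_m, \qquad f(j'_{m+1}) = b.$$
Under this $f$ the three leftover terms become $-e_{a j_m j_m b}$, $e_{a j_m b b}$ and $-e_{a i_0 b b}$. All three are irregular, hence zero in $\mathcal{R}_3$, so $f_*(\tau_{m+2}) = \omega$.

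The main step is to check that $f$ is a digraph morphism; I go through every arrow of $T_{m+2}$.
\begin{itemize}
\item The arrows among $a', b', i'_k, j'_k$ with $k \le m$, and the arrows $i'_{k+1} \to j'_k$ with $k \le m-1$, go to arrows of $G$ by hypothesis.
\item $a' \to i'_{m+1}$ goes to $a \to j_m$ or collapses when $a = j_m$; this is exactly the assumption ``$a \to j_m$ or $a = j_m$''.
\item $i'_{m+1} \to j'_m$ collapses to the single vertex $j_m$.
\item $i'_{m+1} \to j'_{m+1}$ goes to $j_m \to b$.
\item $j'_{m+1} \to b'$ collapses to $b$.
\item $i'_0 \to j'_{m+1}$ goes to $i_0 \to b$ or collapses when $i_0 = b$; this is exactly the other hypothesis.
\end{itemize}
So $f$ is a morphism, and Proposition \ref{pro:morphism} gives $\omega = f_*(\tau_{m+2}) \in \Omega_3$. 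Since $f(b') = f(j'_{m+1}) = b$, the map is not injective, so $\omega$ is a merging image of $\tau_{m+2}$, in the same sense as in Lemma \ref{lem:image2}.

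The only delicate point is choosing the images of the two extra vertices $i'_{m+1}$ and $j'_{m+1}$ so that all three surplus terms degenerate simultaneously while every arrow remains an arrow or collapses. The choice $i'_{m+1} \mapsto j_m$ and $j'_{m+1} \mapsto b$ is forced by the form of the two hypotheses, and the check above confirms it works.
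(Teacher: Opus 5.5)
Your proof is correct and uses the same strategy as the paper: you realize $\omega$ as $f_*(\tau_{m+2})$ for a morphism $f\colon T_{m+2}\to G$ that agrees with the given vertices on $a',b',i'_k,j'_k$ ($k\le m$), and then apply Proposition~\ref{pro:morphism}. The only difference is where the two extra vertices go. The paper takes $f(i'_{m+1})=a$ and $f(j'_{m+1})=i_0$, which kills the surplus terms as $e_{aaj_mb}$, $e_{aai_0b}$, $e_{ai_0i_0b}$. Your mirror choice $f(i'_{m+1})=j_m$, $f(j'_{m+1})=b$ works equally well and reduces to the map of Lemma~\ref{lem:image1} when $m=0$; so your choice is not actually ``forced,'' though that remark does not affect your argument.
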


\begin{figure}[htbp]
    \centering
    \begin{tikzpicture}[>= {Stealth[scale=1.5]}, scale=0.7, font=\small]
    % ========== 全局参数（左右图共用） ==========
    \def\xR{4}
    \def\yR{0.4}
    \def\jY{-3.2}
    
    % ------------------ 左图 ------------------
    \begin{scope}[local bounding box=left] % 为左图创建边界框，便于画箭头
        % 虚线椭圆
        \draw[dashed] (0,0) ellipse [x radius=\xR, y radius=\yR];
        \draw[dashed] (0,\jY) ellipse [x radius=\xR, y radius=\yR];
        
        % 节点坐标（与原来完全相同）
        \coordinate (im1)  at ({\xR*cos(180)}, {\yR*sin(180)});
        \coordinate (i0)   at ({\xR*cos(220)}, {\yR*sin(220)});
        \coordinate (i1)   at ({\xR*cos(240)}, {\yR*sin(240)});
        \coordinate (ikm1) at ({\xR*cos(300)}, {\yR*sin(300)});
        \coordinate (ik)   at ({\xR*cos(320)}, {\yR*sin(320)});
        \coordinate (ikp1) at ({\xR*cos(360)}, {\yR*sin(360)});
        
        \coordinate (jm1)  at ({\xR*cos(180)}, {\jY+\yR*sin(180)});
        \coordinate (j0)   at ({\xR*cos(220)}, {\jY+\yR*sin(220)});
        \coordinate (j1)   at ({\xR*cos(240)}, {\jY+\yR*sin(240)});
        \coordinate (jkm1) at ({\xR*cos(300)}, {\jY+\yR*sin(300)});
        \coordinate (jk)   at ({\xR*cos(320)}, {\jY+\yR*sin(320)});
        \coordinate (jkp1) at ({\xR*cos(360)}, {\jY+\yR*sin(360)});
        
        \coordinate (A) at (0,2);
        \coordinate (B) at (0,\jY-2);
        
        % 小黑点
        \foreach \p in {im1,i0,i1,ikm1,ik,ikp1, jm1,j0,j1,jkm1,jk,jkp1, A, B}
            \draw[fill=black] (\p) circle (2pt);
        
        % 省略号
        \node at ({\xR*cos(270)}, {\yR*sin(270)-0.5}) {$\boldsymbol{\cdots}$};
        \node at ({\xR*cos(270)}, {\jY+\yR*sin(270)+0.4}) {$\boldsymbol{\cdots}$};
        
        % 标签
        \node[label={[label distance=0.5mm]below left:$i'_{m}$}] at (im1) {};
        \node[label={[label distance=0.5mm]below:$i'_{m+1}$}] at (i0)  {};
        \node[label={[label distance=0.5mm]below right:$i'_0$}] at (i1)  {};
        \node[label={[label distance=0.5mm]below left:$i'_{k-1}$}] at (ikm1) {};
        \node[label={[label distance=0.5mm]below left:$i'_k$}] at (ik)   {};
        \node[label={[label distance=0.5mm]below right:$i'_{k+1}$}] at (ikp1) {};
        \node[label={[label distance=0.5mm]above left:$j'_{m}$}]  at (jm1)  {};
        \node[label={[label distance=1.2mm]above:$j'_{m+1}$}]  at (j0)  {};
        \node[label={[label distance=0.5mm]above right:$j'_0$}]    at (j1)   {};
        \node[label={[label distance=0.5mm]above left:$j'_{k-1}$}] at (jkm1) {};
        \node[label={[label distance=0.5mm]above left:$j'_k$}]    at (jk)   {};
        \node[label={[label distance=0.5mm]above right:$j'_{k+1}$}] at (jkp1) {};
        \node[label={[label distance=0.5mm]above:$a'$}] at (A) {};
        \node[label={[label distance=0.5mm]below:$b'$}] at (B) {};
        
        % 箭头（直线）
        \foreach \i in {im1,i0,i1,ikm1,ik,ikp1} \draw[->, thick] (A) -- (\i);
        \foreach \j in {jm1,j0,j1,jkm1,jk,jkp1} \draw[->, thick] (\j) -- (B);
        \draw[->, thick] (im1)--(jm1); \draw[->, thick] (i0)--(j0);
        \draw[->, thick] (i1)--(j1);   \draw[->, thick] (ikm1)--(jkm1);
        \draw[->, thick] (ik)--(jk);   \draw[->, thick] (ikp1)--(jkp1);
        \draw[->, thick] (i0)--(jm1);  \draw[->, thick] (i1)--(j0);
        \draw[->, thick] (ik)--(jkm1); \draw[->, thick] (ikp1)--(jk);
    \end{scope}
    
    % ------------------ 右图（整体向右平移12cm）------------------
    \begin{scope}[xshift=12cm, local bounding box=right] % 右移12cm
        % 内容与左图完全一致（直接复制）
        \draw[dashed] (0,0) ellipse [x radius=\xR, y radius=\yR];
        \draw[dashed] (0,\jY) ellipse [x radius=\xR, y radius=\yR];
        
        \coordinate (im1)  at ({\xR*cos(180)}, {\yR*sin(180)});
        \coordinate (i0)   at ({\xR*cos(220)}, {\yR*sin(220)});
        \coordinate (i1)   at ({\xR*cos(230)}, {\yR*sin(230)});
        \coordinate (ikm1) at ({\xR*cos(300)}, {\yR*sin(300)});
        \coordinate (ik)   at ({\xR*cos(320)}, {\yR*sin(320)});
        \coordinate (ikp1) at ({\xR*cos(360)}, {\yR*sin(360)});
        
        \coordinate (jm1)  at ({\xR*cos(180)}, {\jY+\yR*sin(180)});
        \coordinate (j0)   at ({\xR*cos(220)}, {\jY+\yR*sin(220)});
        \coordinate (j1)   at ({\xR*cos(230)}, {\jY+\yR*sin(230)});
        \coordinate (jkm1) at ({\xR*cos(300)}, {\jY+\yR*sin(300)});
        \coordinate (jk)   at ({\xR*cos(320)}, {\jY+\yR*sin(320)});
        \coordinate (jkp1) at ({\xR*cos(360)}, {\jY+\yR*sin(360)});
        
        \coordinate (A) at (0,2);
        \coordinate (B) at (0,\jY-2);
        
        \foreach \p in {im1,i1,ikm1,ik,ikp1, jm1,j1,jkm1,jk,jkp1, A, B}
            \draw[fill=black] (\p) circle (2pt);
        
        \node at ({\xR*cos(270)}, {\yR*sin(270)-0.5}) {$\boldsymbol{\cdots}$};
        \node at ({\xR*cos(270)}, {\jY+\yR*sin(270)+0.4}) {$\boldsymbol{\cdots}$};
        
        \node[label={[label distance=0.5mm]below left:$i_{m}$}] at (im1) {};
        \node[label={[label distance=0.5mm]below left:$i_0$}] at (i1)  {};
        \node[label={[label distance=0.5mm]below left:$i_{k-1}$}] at (ikm1) {};
        \node[label={[label distance=0.5mm]below left:$i_k$}] at (ik)   {};
        \node[label={[label distance=0.5mm]below right:$i_{k+1}$}] at (ikp1) {};
        \node[label={[label distance=0.5mm]above left:$j_{m}$}]  at (jm1)  {};
        \node[label={[label distance=0.5mm]above left:$j_0$}]    at (j1)   {};
        \node[label={[label distance=0.5mm]above left:$j_{k-1}$}] at (jkm1) {};
        \node[label={[label distance=0.5mm]above left:$j_k$}]    at (jk)   {};
        \node[label={[label distance=0.5mm]above right:$j_{k+1}$}] at (jkp1) {};
        \node[label={[label distance=0.5mm]above:$a$}] at (A) {};
        \node[label={[label distance=0.5mm]below:$b$}] at (B) {};
        
        \foreach \i in {im1,i1,ikm1,ik,ikp1} \draw[->, thick] (A) -- (\i);
        \foreach \j in {jm1,i1,j1,jkm1,jk,jkp1} \draw[->, thick] (\j) -- (B);
        \draw[->, thick] (im1)--(jm1); \draw[->, thick] (A)--(i1);
        \draw[->, thick] (i1)--(j1);   \draw[->, thick] (ikm1)--(jkm1);
        \draw[->, thick] (ik)--(jk);   \draw[->, thick] (ikp1)--(jkp1);
        \draw[->, thick] (A)--(jm1);
        \draw[->, thick] (ik)--(jkm1); \draw[->, thick] (ikp1)--(jk);
    \end{scope}
    
    % ========== 中间大箭头：从左图指向右图 ==========
    \draw[->, ultra thick, line width=1.5pt] 
        ([xshift=-0.4cm]left.east)   % 从左图右侧边界稍右一点出发
        -- 
        ([xshift=0.4cm]right.west) % 指向右图左侧边界稍左一点
        node[midway, above, font=\large] {$f$}; % 可加标签，也可不加
    \end{tikzpicture}
    \caption{Schematic diagram of $f : T_{m+2} \to G$ in Lemma\ref{lem:image3}}
    \label{fig:image3}
\end{figure}

\begin{proof}
Let the vertices of the trapezohedron $T_{m+2}$ be $a', b', i'_0, \dots, i'_{m+1}, j'_0,  \dots, j'_{m+1}$
satisfying 
$$a' \to i'_k \to j'_k \to b', \quad i'_{k+1} \to j'_k$$ 
for all $k = 0, \dots, m+1 \pmod{m+2}$.
Then 
$$\tau_{m+2} = \sum_{k=0}^{m+1} \bigl( e_{a' i'_k j'_k b'} - e_{a' i'_{k+1} j'_k b'} \bigr).$$
Define a morphism $f : T_{m+2} \to G$ by
$$f(a')=a, f(b')=b, f(i'_k)=i_k, f(j'_k)=j_k, f(i'_{m+1})=a, f(j'_{m+1})=i_0$$
for all $k = 0, \dots, m$. (cf. Fig.\ref{fig:image3}) Let $f_* : \mathcal{A}_3(T_{m+2}) \to \mathcal{A}_3(G)$ be the induced map, we have
$$\begin{aligned}
f_*(\tau_{m+2}) &= \sum_{k=0}^{m-1} \bigl( e_{a i_k j_k b} - e_{a i_{k+1} j_k b} \bigr) + e_{a i_m j_m b} - e_{a a j_m b} + e_{a a i_0 b} - e_{a i_0 i_0 b}\\
&=\sum_{k=0}^m \bigl( e_{a i_k j_k b} - e_{a i_{k+1} j_k b} \bigr) + e_{a i_m j_m b} = \omega.
\end{aligned}$$
Since $f(a') = f(i'_{m+1}) = a$, $\omega$ is a merging image of $\tau_{m+2}$. By Proposition \ref{pro:morphism}, it follows that $\omega \in \Omega_3$.
\end{proof}

\iffalse
The boundary of $\omega$ is
$$\begin{aligned}
\partial \omega &= \sum_{k=0}^{m-1} (\partial e_{a i_k j_k b} - \partial e_{a i_{k+1} j_k b}) + \partial e_{a i_m j_m b}\\
&= \sum_{k=0}^{m-1} (e_{i_k j_k b} - e_{a j_k b} + e_{a i_k b} - e_{a i_k j_k} - e_{i_{k+1} j_k b} + e_{a j_k b} - e_{a i_{k+1} b} + e_{a i_{k+1} j_k} ) \\
&+ (e_{i_m j_m b} - e_{a j_m b} + e_{a i_m b} - e_{a i_m j_m}) \\
&= \sum_{k=0}^{m-1} ( e_{i_k j_k b} - e_{a i_k j_k} + e_{i_{k+1} j_k b} - e_{a i_{k+1} j_k} ) + ( e_{i_m j_m b} - e_{a i_m j_m}) + e_{a i_0 b} - e_{a j_m b} \\
&\in \mathcal{A}_2.
\end{aligned}$$
Therefore, $\omega \in \Omega_3$.
\fi

\subsection{The structure of $\Omega_3$}\label{subsec2:structureomega3}

To prove the first statement of Theorem \ref{thm:main}, we only need to consider all \emph{minimal} $\partial$-invariant paths. What we need to prove is that all such paths are trapezohedral paths and their merging images. The following theorem shows that the space of paths of $\Omega_3$ is spanned by minimal $\partial$-invariant paths.

\begin{lemma}\label{lem:minimal}
Let G be a digraph. Let a path $\omega \in \Omega_3(G)$ be a minimal $\partial$-invariant path, denote by $P$ the set of all elementary terms $e_{aijb}$ of $\omega$ with non-zero coefficients. If $\omega_0 = \sum_{e \in P} c(e)\, e \ne 0$ satisfies $\omega_0 \in \Omega_3$, where $c(e) \in \mathbb{K}$ for all $e \in P$, then there exists a scalar $c \in \mathbb{K}$ such that $\omega = c\,\omega_0$. 
\end{lemma}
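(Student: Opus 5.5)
The plan is the standard ``pivot'' argument from linear algebra: use the fact that $\Omega_3$ is a linear subspace to cancel one term of $\omega$ against $\omega_0$, and then apply minimality. Write $\omega = \sum_{e\in P} w(e)\, e$ with $w(e)\neq 0$ for every $e\in P$. Since $\omega_0\neq 0$, we may fix some $e_1\in P$ with $c(e_1)\neq 0$. Put
$$c := \frac{w(e_1)}{c(e_1)}, \qquad \eta := \omega - c\,\omega_0 = \sum_{e\in P}\bigl(w(e) - c\,c(e)\bigr)e.$$

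The first step is to check that $\eta\in\Omega_3$. Both $\omega$ and $\omega_0$ lie in $\mathcal{A}_3$, and both $\partial\omega$ and $\partial\omega_0$ lie in $\mathcal{A}_2$. Hence $\eta\in\mathcal{A}_3$, and $\partial\eta = \partial\omega - c\,\partial\omega_0\in\mathcal{A}_2$ by linearity of $\partial$. So $\eta$ is $\partial$-invariant.

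The second step is to look at the support of $\eta$. By the choice of $c$, the coefficient of $e_1$ in $\eta$ is zero. Therefore the set of elementary paths on which $\eta$ has nonzero coefficient is contained in $P\setminus\{e_1\}$, which is a proper subset of $P=e(\omega)$.

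Now suppose $\eta\neq 0$. Then its support $S$ is nonempty, and $S\subseteq P\setminus\{e_1\}$ is a proper subset of $e(\omega)$. Moreover, $\eta$ is a linear combination of the elements of $S$ with all coefficients nonzero, and it is $\partial$-invariant. This contradicts the minimality of $\omega$ as defined above. Hence $\eta=0$, that is, $\omega = c\,\omega_0$.

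The only point needing care is reading the definition of minimality correctly. I interpret ``a nonzero linear combination of a nonempty proper subset of $e(\omega)$ that is $\partial$-invariant'' to include combinations whose coefficients are all nonzero on a subset; this is exactly what $\eta$ provides. If instead the definition is read as allowing arbitrary coefficients, the argument is the same: $\eta$ is a nonzero combination of elements of $P\setminus\{e_1\}$. Note also that $c\neq 0$, since $w(e_1)\neq 0$, although the statement does not require this. Everything else is routine linearity, so I do not expect a genuine obstacle.
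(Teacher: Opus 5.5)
Your proposal is correct and follows the paper's own argument: choose $c$ to cancel the coefficient of one term $e_1 \in P$ with $c(e_1)\neq 0$, so that $\omega - c\,\omega_0$ is supported on a proper subset of $P$, and then minimality forces $\omega - c\,\omega_0 = 0$. You are slightly more explicit than the paper in checking by linearity that $\omega - c\,\omega_0 \in \Omega_3$, a step the paper leaves implicit.
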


\begin{proof}
Since $\omega_0 \neq 0$, there exists a scalar $c \in \mathbb{K}$ such that $|| \omega - c\omega_0 || < || \omega ||$. This is because, assuming $e_{aijb} \in P$ and $c(e_{aijb}) \neq 0$, we can choose a nonzero constant $c$ such that the coefficient of $e_{aijb}$ in $\omega - c\omega_0$ becomes zero. Consequently, the set of all elementary terms of $\omega - c\omega_0$ with nonzero coefficients is strictly contained in $P$. By the minimality of $\omega$, we must have $\omega - c \omega_0 = 0$.
\end{proof}

\begin{comment}
If $\omega - c \, \omega_0 \ne 0$, there exists a scalar $c' \in \mathbb{K}$ such that $\| \omega - c' \, (\omega - c \, \omega_0) \| < \| \omega \|$. 
Clearly, $\omega - c'\,(\omega - c\,\omega_0) \in \Omega_3$ and $c'\,(\omega - c\,\omega_0) \in \Omega_3$. Hence,
$$\omega = \bigl[\omega - c'\,(\omega - c\,\omega_0)\bigr] + c'\,(\omega - c\,\omega_0)$$
is a sum of other $\partial-$invariant paths with strictly smaller widths,
which contradicts the minimality of $\omega$. 

Hence $\omega - c \, \omega_0 = 0$, that is, $\omega = c\,\omega_0$. 
\end{comment}

Let $G$ be a digraph. By Proposition \ref{pro:cluster}, $\Omega_3(G)$ has a basis that consists of minimal $\partial-$invariant clusters. Let a path $\omega \in \Omega_3$ be a minimal $\partial-$invariant $(a,b)$-cluster.To prove Theorem \ref{thm:main}, it therefore suffices to show that $\omega$ is a merging image of one of the trapezohedral paths $\tau_m$ up to a constant factor.

Denote by $V$ the set of all elementary terms of $\omega$ of the form $e_{a i j b}$. We construct a graph $\Gamma$ with $V=V(\Gamma)$ as the vertex set, and add edges according to the following rules:

\begin{itemize}
\item	We add an edge of color 1 between distinct vertices $e_{a i_1 j b}$ and $e_{a i_2 j b}$ if $a \not\to j$ and $a \ne j$;

\item	We add an edge of color 2 between distinct vertices $e_{a i j_1 b}$ and $e_{a i j_2 b}$ if $i \not\to b$ and $i \ne b$.
\end{itemize}

The edge set is denoted by $E=E(\Gamma)$. We denote the set of edges of color $1$ by
$$E_1 := \{ \, e \in E(\Gamma) \mid e \text{ has color } 1 \, \},$$
and the set of edges of color $2$ by
$$E_2 := \{ \, e \in E(\Gamma) \mid e \text{ has color } 2 \, \}.$$
Clearly $E=E_1\cup E_2$. We then have the following two propositions:

\begin{proposition}\label{clm:exclusion}
For $v, v' \in V$, the relations $vv' \in E_1$ and $vv' \in E_2$ cannot hold simultaneously.
\end{proposition}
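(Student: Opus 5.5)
The argument is a direct comparison of the two edge definitions; no earlier result beyond the definition of $\Gamma$ is needed. Suppose $vv' \in E_1$ and $vv' \in E_2$ at the same time, and derive a contradiction from the shape that each colour forces on the pair $\{v,v'\}$.

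First I unpack what $vv'\in E_1$ says. By the definition of colour-$1$ edges, there are vertices $i_1, i_2, j$ with
$$v = e_{a i_1 j b}, \qquad v' = e_{a i_2 j b}.$$
Since $v$ and $v'$ are distinct vertices of $\Gamma$, we must have $i_1 \neq i_2$. So $v$ and $v'$ share their third vertex and differ in their second vertex.

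Next I unpack $vv'\in E_2$. By the definition of colour-$2$ edges, there are vertices $i, j_1, j_2$ with
$$v = e_{a i j_1 b}, \qquad v' = e_{a i j_2 b}.$$
Here distinctness forces $j_1 \neq j_2$. So $v$ and $v'$ share their second vertex and differ in their third vertex.

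Now compare the two descriptions, using that an elementary path $e_{a x y b}$ determines its entries $x$ and $y$. From the second description, $v$ and $v'$ have the same second vertex $i$. The first description then gives $i_1 = i = i_2$, which contradicts $i_1 \neq i_2$. Hence no pair can carry both colours.

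There is no real obstacle; the only point needing care is that distinct vertices of $\Gamma$ are distinct elementary paths. A colour-$1$ edge changes exactly the second position and a colour-$2$ edge changes exactly the third, so a single pair cannot satisfy both. The side conditions $a\not\to j$, $a\neq j$ and $i\not\to b$, $i\neq b$ are not used.
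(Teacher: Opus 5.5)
Your proof is correct and follows essentially the same argument as the paper. A colour-$1$ edge forces equal third entries and a colour-$2$ edge forces equal second entries, so having both would give $v=v'$, which contradicts the requirement that edges join distinct vertices.
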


\begin{proof}
Assume that both $vv' \in E_1$ and $vv' \in E_2$ hold. Suppose that $v = e_{a i j b}$ and $v' = e_{a i' j' b}$. On the one hand, since $v v' \in E_1$, we must have $j = j'$. On the other hand, since $v v' \in E_2$, we must have $i = i'$, which together imply $v = v'$, a contradiction.
\end{proof}

The second proposition shows that each color class forms a disjoint collection of cliques:
\begin{proposition}\label{clm:transitive}
Let $v_1, v_2, v_3$ be the vertices of the graph $\Gamma$, and let $i \in \{1,2\}$. If $v_1v_2 \in E_i$ and $v_2v_3 \in E_i$, then $v_1v_3 \in E_i$.
\end{proposition}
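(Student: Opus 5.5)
The plan is to treat the two colors separately and argue directly from the edge rules, since both are stated entirely in terms of which coordinates are shared and which arrows are missing. Write $v_1 = e_{a i_1 j_1 b}$, $v_2 = e_{a i_2 j_2 b}$, $v_3 = e_{a i_3 j_3 b}$; all three lie in $V$ and share the endpoints $a$ and $b$, since $\omega$ is an $(a,b)$-cluster. The key observation is that each color relation is ``same value in one fixed coordinate, different value in the other, plus a non-arrow condition that depends only on the shared coordinate.'' This is an equivalence-type relation restricted to distinct vertices, so transitivity should follow almost immediately.

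\textbf{Color 1.} From $v_1v_2 \in E_1$ we get that $v_1 \neq v_2$, that $j_1 = j_2 =: j$, and that $a \not\to j$, $a \neq j$. From $v_2v_3 \in E_1$ we get that $v_2 \neq v_3$, that $j_2 = j_3$, and the same condition on $j_3 = j$. Hence $v_1$ and $v_3$ share the third vertex $j$, and the condition $a\not\to j$, $a\neq j$ already holds. The only remaining requirement in the definition of a color-1 edge is that $v_1$ and $v_3$ be \emph{distinct}. \textbf{Color 2} is symmetric: the shared coordinate is $i$, and the condition is $i\not\to b$, $i\neq b$.

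The one genuine subtlety is the degenerate case $v_1 = v_3$, where there is no edge $v_1v_3$ because $\Gamma$ has no loops. The statement is therefore to be read for distinct $v_1, v_3$ (as is implicit, since the goal is to show that each color class is a disjoint union of cliques). I will state this convention explicitly, and note that when $v_1=v_3$ the conclusion is vacuous for the clique structure. When $v_1 \neq v_3$, the defining rule directly yields $v_1v_3\in E_i$, which completes the proof. I do not expect any real obstacle beyond being careful about this distinctness convention.
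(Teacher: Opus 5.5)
Your proof is correct and is essentially the paper's argument: each color-$i$ relation fixes one coordinate ($j$ for color 1, $i$ for color 2) and imposes a non-arrow condition that depends only on that shared coordinate, so the relation passes from $v_1v_2$ and $v_2v_3$ to $v_1v_3$. Your remark that one must assume $v_1 \neq v_3$, because $\Gamma$ has no loops, is a valid precision that the paper leaves implicit. It causes no trouble in the paper's only use of this proposition (in the proof of Proposition~\ref{clm:alternating}), where $v_0$ and $v_{t-1}$ are distinct vertices of a path.
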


\begin{proof}
Consider the case $i=1$; the case $i=2$ follows similarly. Suppose $v_1 = e_{a i_1 j b}$. Since $v_1v_2 \in E_1$, there exists $i_2 \in V(G)$ such that $v_2 = e_{a i_2 j b}$. Similarly, from $v_2v_3 \in E_1$, there exists $i_3 \in V(G)$ such that $v_3 = e_{a i_3 j b}$. Finally by the definition of $E_1$, it then follows that $v_1v_3 \in E_1$.
\end{proof}

We say that a path or a cycle in $\Gamma$ is \emph{alternating} if any two adjacent edges on it are assigned distinct colors. We next state a structural property of $\Gamma$.

\begin{proposition}\label{clm:alternating}
Assume that $\Gamma$ contains at least one edge and has no alternating cycle. Let $v_0 v_1 \cdots v_n$ be a longest alternating path in $\Gamma$, and suppose $v_0 = e_{a i j b}$. Then the following hold:
\begin{itemize}
\item If $v_0 v_1 \in E_1$, then either $i \to b$ or $i = b$.
\item If $v_0 v_1 \in E_2$, then either $a \to j$ or $a = j$.
\end{itemize}
\end{proposition}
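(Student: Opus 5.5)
Argue by contradiction on the first bullet; the second is symmetric, with the roles of $(a,j)$ and $(i,b)$ exchanged via color $2$. Assume $v_0v_1\in E_1$, $v_0=e_{aijb}$, and $i\not\to b$, $i\neq b$. I will use $\partial$-invariance of $\omega$ to produce a new vertex $v_{-1}$ adjacent to $v_0$ by a color-$2$ edge, and then show that $v_{-1}v_0v_1\cdots v_n$ is a longer alternating path, contradicting maximality.

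\textbf{Finding $v_{-1}$.} Since $i\not\to b$ and $i\ne b$, the $2$-path $e_{aib}$ is not allowed, so its coefficient in $\partial\omega$ must vanish. Terms of $\partial e_{ai'j'b}$ of the form $e_{aib}$ arise only from deleting $j'$ with $i'=i$; deleting $i'$ gives $e_{aj'b}$, which equals $e_{aib}$ only if $j'=i$, impossible since $i\to j'$. Thus the coefficient of $e_{aib}$ in $\partial\omega$ is, up to sign, $\sum_{j'}\omega^{aij'b}$. This sum is zero, and $\omega^{aijb}\neq0$, so there is some $j'\ne j$ with $e_{aij'b}\in V$. The vertex $v_{-1}:=e_{aij'b}$ is joined to $v_0$ by an edge of color $2$, precisely because $i\not\to b$ and $i\neq b$. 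Caveat: if $\mathrm{char}\,\mathbb K$ divides something, the argument still works, since only cancellation of distinct nonzero terms is needed.

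\textbf{Checking the extended path.} The walk $v_{-1}v_0v_1\cdots v_n$ alternates, since $v_{-1}v_0\in E_2$ and $v_0v_1\in E_1$. It remains to show that $v_{-1}\notin\{v_1,\dots,v_n\}$. Suppose $v_{-1}=v_k$ for some $k\ge1$. If $k=1$, then $v_0v_1$ would lie in both $E_1$ and $E_2$, contradicting Proposition~\ref{clm:exclusion}. If $k\ge2$, then $v_0v_1\cdots v_kv_0$ is a closed alternating walk. At the closing vertex $v_0$ the colors are $E_2$ (edge $v_kv_0$) and $E_1$ (edge $v_0v_1$), which differ. At $v_k$ the colors of $v_{k-1}v_k$ and $v_kv_0$ might coincide, however. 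If both are color $2$, Proposition~\ref{clm:transitive} gives $v_{k-1}v_0\in E_2$, so we can shortcut to the cycle $v_0v_1\cdots v_{k-1}v_0$. I will repeat this shortcutting, which terminates because the length drops each time. When the length falls to $2$, the edge $v_0v_1$ would carry both colors, which is excluded. Otherwise the process ends in a genuine alternating cycle, which contradicts the hypothesis that $\Gamma$ has none. Hence $v_{-1}$ is new, and the path $v_{-1}v_0\cdots v_n$ is longer than the longest one, a contradiction.

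The main obstacle is this cycle bookkeeping: making sure that the shortcutting via Proposition~\ref{clm:transitive} really yields an alternating cycle in the paper's sense. That requires the repeated vertices to be distinct and the length to be at least $3$, and the degenerate short cases must be handled by Proposition~\ref{clm:exclusion}. The coefficient computation itself is routine. The second bullet follows identically, using the non-allowed term $e_{ajb}$ and a color-$1$ neighbor $e_{ai'jb}$.
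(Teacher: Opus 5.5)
Your proposal is correct and follows the paper's proof essentially step for step. In both, cancelling the non-allowed term $e_{aib}$ produces a color-$2$ neighbour $e_{aij'b}$ of $v_0$, and maximality forces it onto the path as some $v_t$. The case $t=1$ is ruled out by Proposition~\ref{clm:exclusion}, and otherwise one closes an alternating cycle, shortcutting once via Proposition~\ref{clm:transitive} when $v_{t-1}v_t\in E_2$.

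Two small corrections. First, terms $e_{ai'ib}$ contribute nothing to the coefficient of $e_{aib}$ because such a path would be allowed only if $i\to b$; the reason is not ``$i\to j'$''. Second, no repeated shortcutting is needed. Alternation of the original path forces $v_{t-2}v_{t-1}\in E_1$, so a single shortcut already yields an alternating cycle, or, when $t=2$, an edge $v_0v_1$ carrying both colors.
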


\begin{proof}
Consider the case $v_0 v_1 \in E_1$; the case $v_0 v_1 \in E_2$ can be proved analogously.

Assume that $i \nrightarrow b$ and $i \ne b$, the term $e_{a i b}$ appearing in $\partial e_{a i j b}$ is non-allowed and must be canceled in $\partial \omega$ by the boundary of another elementary 3-path $v' \in V$ and with endpoints $a,b$. Such a 3-path can only be of the form $e_{a i j' b}$ with $i \to j' \to b$. By definition, $v_0 v' \in E_2$. Since the path $v_0 v_1 \cdots v_n$ is the longest, it follows that $v' \in \{ v_1, v_2, \ldots, v_n \}$. Hence, there exists $t \in \{1,2,\ldots,n\}$ such that $v' = v_t$. 

If $t = 1$, then $v_0 v_1 \in E_1$ and $v_0 v_1 \in E_2$, which contradicts Proposition \ref{clm:exclusion}. Hence $t > 1$. If $v_{t-1} v_t \in E_1$, then $v_0 v_1 \cdots v_{t-1} v_t v_0$ forms an alternating cycle, which leads to a contradiction. If $v_{t-1} v_t \in E_2$, then, since $v_0 v_t \in E_2$, Proposition \ref{clm:transitive} implies that $v_0 v_{t-1} \in E_2$. Consequently, $v_0 v_1 \cdots v_{t-1} v_0$ forms a cycle of Case 2, which leads to a contradiction. Therefore, we must have $i \to b$ or $i = b$.

\end{proof}

Based on the preparation above, we finally prove the first statement of Theorem \ref{thm:main}, which is restated as follows:

\begin{theorem}\label{thm:thm}
Let $G$ be a digraph. Then every minimal $\partial$-invariant path is either a trapezohedron or a merging image thereof, as described in Lemma \ref{lem:image1}, Lemma \ref{lem:image0}, Lemma \ref{lem:image2} or Lemma \ref{lem:image3}. Moreover, $\Omega_3(G)$ admits a basis consisting of trapezohedral paths $\tau_m$ with $m \ge 2$ together with their merging images.
\end{theorem}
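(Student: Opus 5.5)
Let $\omega$ be a minimal $\partial$-invariant $(a,b)$-cluster. By Proposition \ref{pro:cluster} and Lemma \ref{lem:minimal}, it suffices to exhibit a nonzero $\partial$-invariant combination $\omega_0$ supported on $e(\omega)$ that matches one of Lemmas \ref{lem:image1}, \ref{lem:image0}, \ref{lem:image2}, \ref{lem:image3}. Then $\omega=c\,\omega_0$, and the basis statement follows from Proposition \ref{pro:cluster}. The tool is the colored graph $\Gamma$ on $V=e(\omega)$. First I record the cancellation mechanism. Take $v=e_{aijb}\in V$. If $a\not\to j$ and $a\neq j$, the non-allowed term $-e_{ajb}$ of $\partial v$ can only cancel against terms $e_{ai'jb}$ of other elements of $V$, since in a $3$-path cluster $e_{ajb}$ arises only by deleting the second vertex. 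The analogous statement holds for $e_{aib}$ when $i\not\to b$, $i\neq b$. Thus $\partial$-invariance says that, for each color-1 clique (fixed $j$, with $a\not\to j$), the coefficients sum to zero, and likewise for each color-2 clique (fixed $i$). In particular every vertex whose ``$j$-condition'' fails has a color-1 neighbour, and every vertex whose ``$i$-condition'' fails has a color-2 neighbour.

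\textbf{Case A: $\Gamma$ has no edges.} Then every $v=e_{aijb}$ has $a\to j$ or $a=j$, and $i\to b$ or $i=b$. Hence $e_{aijb}\in\Omega_3$ by Lemma \ref{lem:image1}, and minimality forces $\|\omega\|=1$.

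\textbf{Case B: $\Gamma$ contains an alternating cycle.} I take a shortest one, $v_0v_1\cdots v_{2m-1}v_0$. It has even length, since colors alternate around the cycle; after a cyclic shift it has the form $e_{ai_0j_0b},e_{ai_1j_0b},e_{ai_1j_1b},\dots,e_{ai_0j_{m-1}b}$. Color 1 joins equal $j$'s and color 2 joins equal $i$'s, so the relations $a\to i_k\to j_k\to b$ and $i_{k+1}\to j_k$ hold. Lemma \ref{lem:image0} then gives the alternating-sign sum in $\Omega_3$, supported in $V$, and Lemma \ref{lem:minimal} concludes. I need $m\ge2$; Proposition \ref{clm:exclusion} rules out $m=1$.

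\textbf{Case C: edges exist but no alternating cycle.} I take a longest alternating path $v_0\cdots v_n$. Proposition \ref{clm:alternating} applies at both ends, the second by reversing the path. Writing the path as consecutive terms sharing $j$ or $i$, the parity of $n$ and the colors of the first and last edges give the possible shapes. If the path starts and ends with color-1 edges, the endpoints $e_{ai_0j_0b}$ and $e_{ai_{m+1}j_mb}$ satisfy $i_0\to b$ or $i_0=b$, and $i_{m+1}\to b$ or $i_{m+1}=b$. This is exactly Lemma \ref{lem:image2}. If it starts with color 1 and ends with color 2, we get Lemma \ref{lem:image3}. If it starts and ends with color 2, the roles of $a$ and $b$ are swapped. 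I expect this to reduce to Lemma \ref{lem:image2} or \ref{lem:image3} via the symmetric construction, or to a dual version of these lemmas proved by the same merging trick: collapse $i'_{m+1}$ to $a$ rather than $j'_{m+1}$ to $b$.

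\textbf{Main obstacle.} The delicate point is Case C: checking that the signed alternating sum is genuinely $\partial$-invariant. This needs the boundary conditions at both endpoints, which Proposition \ref{clm:alternating} supplies. It also needs the interior vertices to raise no new non-allowed faces, since within the sum each interior non-allowed face cancels in pairs along the path. Getting the reversal or symmetric cases to fit the stated lemmas exactly is the other fiddly part, as is handling degenerate vertex coincidences such as $i_0=b$ or $a=j_m$.
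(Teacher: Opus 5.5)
Your proposal is correct and is essentially the paper's proof. It uses the same colored graph $\Gamma$ and the same three-way split; your ``$\Gamma$ has no edges'' case is equivalent to the paper's ``some term has terminal number $2$'' case. It then applies Proposition~\ref{clm:alternating} at both ends of a longest alternating path and concludes with Lemma~\ref{lem:minimal}.

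The color-$2$/color-$2$ endpoint case you flag is absorbed by the paper into its ``without loss of generality''; this really rests on the $a\leftrightarrow b$ reversal symmetry, using $T_m^{\mathrm{op}}\cong T_m$. Your dual construction does settle that case. Take the morphism $T_{m+2}\to G$ given by $a'\mapsto a$, $b'\mapsto b$, $i'_k\mapsto i_k$ and $j'_k\mapsto j_{k+1}$ for $0\le k\le m$, $i'_{m+1}\mapsto a$, $j'_{m+1}\mapsto j_0$. It sends $-\tau_{m+2}$ to $\sum_{k=0}^{m}\bigl(e_{ai_kj_kb}-e_{ai_kj_{k+1}b}\bigr)$. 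This is a merging image of $\tau_{m+2}$, although it is not literally of the shape in Lemma~\ref{lem:image2} or Lemma~\ref{lem:image3}.
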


\begin{proof}

Using the definition of the graph $\Gamma$ and the edge generation 
procedure described above, we distinguish the following cases 
according to whether $\Gamma$ contains edges and whether it contains 
alternating cycles.

\medskip
\noindent\textbf{Case 1.}
If there exists an element $e_{a i j b} \in V$ such that $(a \to j \text{ or } a = j)$ and $(i \to b \text{ or } i = b)$, then by Lemma \ref{lem:image1}, $e_{a i j b} \in \Omega_3$. 
By Lemma \ref{lem:minimal}, there exists a scalar $c \in \mathbb{K}$ such that $\omega = c\, e_{a i j b}$. 

By Lemma \ref{lem:image1}, the path $\omega$ is a merging image of $c\, \tau_2$.

\medskip
\noindent\textbf{Case 2.}
Suppose there exists a cycle $v_0 v_1 \cdots v_{n-1} v_0$ in $\Gamma$ such that adjacent edges along the cycle have distinct colors. Without loss of generality, we may assume
$$v_i v_{i+1} \in E_1 \quad \text{for even } i, \quad v_i v_{i+1} \in E_2 \quad \text{for odd } i,$$
where indices are taken modulo $n$.

It is clear that $n$ is even, write $n = 2m$. 
If $m = 1$, then the cycle is $v_0 v_1 v_0$, and hence $v_0 v_1 \in E_1$ and $v_1 v_0 \in E_2$, which contradicts Claim \ref{clm:exclusion}. Hence, we must have $m \ge 2$. For $k = 0, 1, \dots, m-1$, Suppose that
$$v_{2k} = e_{a i_k j_k b}, \quad v_{2k+1} = e_{a i'_k j'_k b}.$$
Since $v_{2k} v_{2k+1} \in E_1$, we have $j'_k = j_k$. 
Since $v_{2k+1} v_{2k+2} \in E_2$, we have $i'_k = i_{k+1}$, where we identify $i_m = i_0$. 
Hence, $v_{2k+1} = e_{a i_{k+1} j_k b}.$ Let $\omega_0 = \sum_{k=0}^{n-1} (-1)^k \, v_k$, then by Lemma \ref{lem:image0}, $\omega_0 \in \Omega_3$. By Lemma \ref{lem:minimal}, there exists a scalar $c \in \mathbb{K}$ such that $\omega = c\, \omega_0$. Hence, $\omega$ can be written in the form
$$\omega = c\, (e_{a i_0 j_0 b} - e_{a i_1 j_0 b} + e_{a i_1 j_1 b} - \cdots + e_{a i_{m-1} j_{m-1} b} - e_{a i_0 j_{m-1} b}), $$
where the indices satisfy
$$a \to i_k \to j_k \to b,\quad i_{k+1} \to j_k.$$
By Lemma \ref{lem:image0}, the path $\omega$ is a merging image of $c\, \tau_{m}$.

\medskip
\noindent\textbf{Case 3.}
In this case, the graph $\Gamma$ contains no alternating cycle and no vertex in $V(\Gamma)$ satisfies the condition of Case 1, that is, there is no $e_{a i j b} \in V(\Gamma)$ such that both $(a \to j \text{ or } a = j)$ and $(i \to b \text{ or } i = b)$ hold.

Our strategy is to analyze the structure of $\Gamma$ by considering a maximal alternating path.
We will use Proposition \ref{clm:alternating} to show that such a path must terminate at vertices satisfying special boundary conditions, which will force $\omega$ to be a merging image of a trapezohedron. 

\begin{enumerate}
\item We first show that the graph $\Gamma$ contains at least one edge, that is, it does not consist entirely of isolated vertices.

Since Case 1 does not occur, for any vertex $e_{a i j b} \in V(\Gamma)$ we have either $(a \nrightarrow j$ and $a \ne j)$ or $(i \nrightarrow b$ and $i \ne b)$. Without loss of generality, assume $a \nrightarrow j$ and $a \ne j$. Then the term $e_{a j b}$ appearing in $\partial e_{a i j b}$ is non-allowed and must be canceled in $\partial \omega$ by the boundary of another elementary 3-path from $V$. Such a 3-path can only be of the form $e_{a i' j b}$ with $a \to i' \to j$. By definition, there is an edge connecting $e_{a i j b}$ and $e_{a i' j b}$. Hence, the graph $\Gamma$ contains edges.

\item Consider a maximal alternating path.

To understand the global structure of $\Gamma$, we consider a longest path $v_0 v_1 \cdots v_n$ such that the colors of adjacent edges along the path are different. The maximality of this path will impose strong restrictions on its endpoints. Without loss of generality, we may assume that
$$v_k v_{k+1} \in E_1 \text{ for even } k, \quad v_k v_{k+1} \in E_2 \text{ for odd } k.$$

Suppose that $v_0 = e_{a i j b}$. By Proposition \ref{clm:alternating}, we must have $i \to b$ or $i = b$. Similarly, let $v_n = e_{a i' j' b}$. If $n$ is odd, then $v_{n-1}v_n \in E_1$, and by Proposition \ref{clm:alternating} we obtain either $i' \to b$ or $i' = b$. If $n$ is even, then $v_{n-1}v_n \in E_2$, and Proposition \ref{clm:alternating} yields either $a \to j'$ or $a = j'$.

\medskip
\begin{itemize}
\item
If $n$ is odd, then $v_{n-1} v_n \in E_1$. Suppose that $n = 2m + 1$. 

For $k = 0, 1, \dots, m$, suppose that
$$v_{2k} = e_{a i_k j_k b}, \quad v_{2k+1} = e_{a i'_k j'_k b}.$$
From the discussion above, we have $(i_0 \to b$ or $i_0 = b)$ and $(i'_m \to b$ or $i'_m = b)$. 
Since $v_{2k} v_{2k+1} \in E_1$, we have $j'_k = j_k$. 
For $k \le m-1$, since $v_{2k+1} v_{2k+2} \in E_2$, we have $i'_k = i_{k+1}$. Rename $i'_m$ as $i_{m+1}$, hence, for $k = 0, 1, \dots, m$, $v_{2k+1} = e_{a i_{k+1} j_k b}$. (cf. Fig.\ref{fig:image2})

Let $\omega_0 = \sum_{k=0}^{n} (-1)^k \, v_k$, then by Lemma \ref{lem:image2}, $\omega_0 \in \Omega_3$. By Lemma \ref{lem:minimal}, there exists a scalar $c \in \mathbb{K}$ such that $\omega = c\, \omega_0$. 
Hence, $\omega$ can be written in the form
$$\omega = c\, (e_{a i_0 j_0 b} - e_{a i_1 j_0 b} + e_{a i_1 j_1 b} - \cdots + e_{a i_m j_m b} - e_{a i_{m+1} j_m b}), $$
where the indices satisfy
$$a \to i_k \to j_k \to b, \quad i_{k+1} \to j_k,\quad i_0 \to b \; or \; i_0 = b,\quad i_{m+1} \to b \; or \; i_{m+1} = b,\quad a \to i_{m+1},$$
by Lemma \ref{lem:image2}, the path $\omega$ is a merging image of $c\, \tau_{m+2}$.

\medskip
\item
If $n$ is even, then $v_{n-1} v_n \in E_2$. Suppose that $n = 2m$. 

For $k = 0, 1, \dots, m-1$, suppose that
$$v_{2k} = e_{a i_k j_k b}, \quad v_{2k+1} = e_{a i'_k j'_k b}, \quad v_{2m} = e_{a i_m j_m b}.$$
From the discussion above, we have $(i_0 \to b$ or $i_0 = b)$ and $(a \to j_m$ or $a = j_m)$. 
Since $v_{2k} v_{2k+1} \in E_1$, we have $j'_k = j_k$. 
Since $v_{2k+1} v_{2k+2} \in E_2$, we have $i'_k = i_{k+1}$. Hence,  $v_{2k+1} = e_{a i_{k+1} j_k b}$. (cf. Fig.\ref{fig:image3})

Let $\omega_0 = \sum_{k=0}^{n} (-1)^k \, v_k$, then by Lemma \ref{lem:image3}, $\omega_0 \in \Omega_3$. By Lemma \ref{lem:minimal}, there exists a scalar $c \in \mathbb{K}$ such that $\omega = c\, \omega_0$. 
Hence, $\omega$ can be written in the form
$$\omega = c\,(e_{a i_0 j_0 b} - e_{a i_1 j_0 b} + e_{a i_1 j_1 b} - \cdots - e_{a i_m j_{m-1} b} + e_{a i_m j_m b}),$$
where the indices satisfy
$$a \to i_k \to j_k \to b,\quad i_{k+1} \to j_k,\quad i_0 \to b \; or \; i_0 = b,\quad a \to j_m \; or \; a = j_m,$$
by Lemma \ref{lem:image3}, the path $\omega$ is a merging image of $c\, \tau_{m+2}$.
\end{itemize}
\end{enumerate}

\end{proof}

\section{The dimension and basis of $\Omega_3$}\label{sec:dimandbasis}

In this section, based on the results of the previous section, we deduce an explicit formula for determining the dimension of $\Omega_3(G)$, and moreover, we give an algorithm for explicitly finding a basis of $\Omega_3(G)$.

By Proposition \ref{pro:cluster}, $\Omega_3(G)$ admits a basis consisting of minimal $\partial$-invariant clusters. Hence it suffices to analyze the structure of minimal $(a,b)$-clusters.
For $a,b \in V(G)$, denote by $\Omega_3^{(a,b)}(G)$ the subspace of all
$\partial$-invariant $(a,b)$-clusters.
By Lemma \ref{lem:cluster}, we have the direct sum decomposition
$$\Omega_3(G) = \bigoplus_{a,b\in V(G)} \Omega_3^{(a,b)}(G).$$
Consequently,
$$\dim \Omega_3(G) = \sum_{a,b\in V(G)} \dim \Omega_3^{(a,b)}(G).$$

Throughout this section, we fix $a, b \in V(G)$. For convenience, we write $A := N^+(a) \setminus \{b\}$ and $B := N^-(b) \setminus \{a\}$ unless otherwise specified. For $\omega= \sum_{i_1, i_2} u^{i_1i_2} e_{i_0 i_1 i_2i_3}\in\Omega_3^{(a,b)}(G)$, define 
\begin{equation}\label{eq:Eomega}
  E_\omega:=\sum_{i_1\in N^+(a), i_2\in N^-(b)} u^{i_1i_2} e_{i_1 i_2}.  
\end{equation}
Recall that the induced digraph from $A \setminus B$ to $B \setminus A$ is defined in Definition \ref{def:ind}.

\begin{center}
\begin{tikzpicture}[>=stealth, thick]

\usetikzlibrary{fit,backgrounds}

% a 点
\node[circle, fill, inner sep=1.5pt, label=left:$a$] (a) at (0,0) {};

\node[draw, circle, minimum size=1.8cm] 
(AminusB) at (3,2) {$A\setminus B$};

\node[draw, circle, minimum size=1.8cm] 
(BminusA) at (6,2) {$B\setminus A$};

\node[draw, ellipse, minimum width=3cm, minimum height=1.6cm] 
(AinterB) at (4.5,-1.8) {$A\cap B$};

% b 点
\node[circle, fill, inner sep=1.5pt, label=right:$b$] (b) at (9,0) {};

% Arrows from a
\draw[->] (a) -- (AminusB);
\draw[->] (a) -- (AinterB);

% ===== 三条严格平行直线 =====

\coordinate (topL) at ([yshift=18pt]AminusB.east);
\coordinate (topR) at ([yshift=18pt]BminusA.west);

\coordinate (midL) at (AminusB.east);
\coordinate (midR) at (BminusA.west);

\coordinate (botL) at ([yshift=-18pt]AminusB.east);
\coordinate (botR) at ([yshift=-18pt]BminusA.west);

% 画箭头
\draw[->] (topL) -- (topR) node[midway, above] {$H_1$};

\draw[->] (midL) -- (midR)
      node[midway, yshift=2pt, inner sep=0pt, text=orange]
      {\fontsize{17pt}{17pt}\selectfont $\cdots$};

\draw[->] (botL) -- (botR) node[midway, below] {$H_t$};

% ===== 为两个框构造“延伸”坐标 =====

% 上框延伸（左右各多 8mm）
\coordinate (topBoxL) at ([xshift=-8mm]topL);
\coordinate (topBoxR) at ([xshift=8mm]topR);

% 下框延伸
\coordinate (botBoxL) at ([xshift=-8mm]botL);
\coordinate (botBoxR) at ([xshift=8mm]botR);

% ===== 空心框 =====
\begin{scope}[on background layer]

\node[
    draw=orange,
    line width=1pt,
    rounded corners=4pt,
    fit=(topBoxL)(topBoxR),
    inner sep=6pt
] {};

\node[
    draw=orange,
    line width=1pt,
    rounded corners=4pt,
    fit=(botBoxL)(botBoxR),
    inner sep=6pt
] {};

\end{scope}

% 向下箭头
\draw[->] (AminusB) -- (AinterB);
\draw[->] (AinterB) -- (BminusA);

% 指向 b
\draw[->] (BminusA) -- (b);
\draw[->] (AinterB) -- (b);

\end{tikzpicture}
\end{center}

\begin{definition}\label{def:edge}
Let $G=(V,E)$ be a directed graph. For any subsets $A,B \subseteq V$, we define
$$E(A,B) = \{\, (u,v)\in E \mid u\in A,\ v\in B \,\}.$$
\end{definition}

 A formula for computing $\dim \Omega_3^{(a,b)}(G)$ is given as follows:

\begin{theorem}\label{thm:main2}
Let $H$ be the induced digraph from $A\setminus B$ to $B\setminus A$, and let $t=t(a,b)$ be the number of its connected components. Write these components as $H_1, \dots, H_t$. For each $k\in\{1,...,t\}$. Define $S_k = S_k(a,b)$ to be the set of edges that either start in $V(H_k) \setminus B$ and end in $ (N^{+}(a) \cup \{ a \} \bigr) \cap N^{-}(b)$, or start in $N^{+}(a) \cap \bigl( N^{-}(b) \cup \{ b \} \bigr)$ and end in $V(H_k) \setminus A$. Then
$$\begin{aligned}
\dim \Omega_3^{(a,b)}(G) =& (|E(H)|-|V(H)|+t) + |E\bigl( N^{+}(a) \cap \bigl( N^{-}(b) \cup \{ b \} \bigr), \bigl( N^{+}(a) \cup \{ a \} \bigr) \cap N^{-}(b) \bigr)|\\
&+ \sum_{k=1}^t \max\{0, |S_k|-1\}.
\end{aligned}$$
\end{theorem}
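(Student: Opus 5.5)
Rather than working with the merging-image classification, I would reduce the computation of $\dim \Omega_3^{(a,b)}(G)$ to linear algebra on a bipartite incidence structure.

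\textbf{Step 1: a linear system.} An $(a,b)$-cluster in $\mathcal{A}_3$ has the form $\omega=\sum u^{ij}e_{aijb}$, summed over arrows $i\to j$ with $i\in N^+(a)$ and $j\in N^-(b)$. Here $i=b$ or $j=a$ is allowed, and $i\neq j$ automatically. Expanding $\partial e_{aijb}=e_{ijb}-e_{ajb}+e_{aib}-e_{aij}$, the first and last terms are always allowed. The term $e_{ajb}$ is non-allowed exactly when $j\notin N^+(a)\cup\{a\}$, and $e_{aib}$ is non-allowed exactly when $i\notin N^-(b)\cup\{b\}$. Arguing as in Lemma \ref{lem:cluster}, non-allowed terms can only cancel within the cluster. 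So $\Omega_3^{(a,b)}(G)$ is the solution space of the following system:
\begin{equation*}
\sum_{i}u^{ij}=0 \ \text{ for } j\in N^-(b)\setminus(N^+(a)\cup\{a\}), \qquad \sum_{j}u^{ij}=0 \ \text{ for } i\in N^+(a)\setminus(N^-(b)\cup\{b\}).
\end{equation*}
One checks directly that these constrained index sets are exactly $B\setminus A$ (in the $j$-role) and $A\setminus B$ (in the $i$-role).

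\textbf{Step 2: a bipartite graph.} Build a bipartite graph $\mathcal{B}$ with a left copy of each $i\in N^+(a)$ and a right copy of each $j\in N^-(b)$. A vertex of $G$ lying in both sets is given two distinct copies. Put one edge for each arrow $i\to j$; this edge is the variable $u^{ij}$. A left vertex is called \emph{constrained} if it lies in $A\setminus B$, and a right vertex is constrained if it lies in $B\setminus A$. The constraint matrix then consists of the rows of the unsigned incidence matrix of $\mathcal{B}$ indexed by constrained vertices.

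For a bipartite graph, the unsigned incidence matrix of a connected component with vertex set $W$ has rank $|W|-1$. Its only row dependency is the alternating sum of left rows minus right rows. Hence, on a component $K$, the constrained rows have rank $c_K$ (the number of constrained vertices of $K$) if $K$ contains an unconstrained vertex, and rank $c_K-1$ otherwise. Therefore
\begin{equation*}
\dim\Omega_3^{(a,b)}(G)=|E(\mathcal{B})|-\sum_K c_K+\#\{K:\ K \text{ is entirely constrained}\}.
\end{equation*}

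\textbf{Step 3: regrouping the edges.} I would split the edges of $\mathcal{B}$ by the status of their endpoints.
\begin{itemize}
\item Edges between two constrained vertices are exactly $E(H)$, and the constrained vertices are exactly $V(H)$.
\item Edges between two unconstrained vertices are exactly $E\bigl(N^{+}(a)\cap(N^{-}(b)\cup\{b\}),\,(N^{+}(a)\cup\{a\})\cap N^{-}(b)\bigr)$. These contribute freely, giving the middle term of the formula.
\item Mixed edges attached to a component $H_k$ are exactly $S_k$. Note that a constrained left vertex can only point to an unconstrained right vertex, and vice versa, by the definitions in Step 1.
\end{itemize}
The components of $\mathcal{B}$ that are entirely constrained are precisely the $H_k$ with $S_k=\emptyset$. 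Each such component contributes $|E(H_k)|-|V(H_k)|+1$. If $S_k\neq\emptyset$, the contribution attributable to $H_k$ is $|E(H_k)|+|S_k|-|V(H_k)|$, which equals $(|E(H_k)|-|V(H_k)|+1)+(|S_k|-1)$. Summing over $k$ gives the stated formula.

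\textbf{Main obstacle.} The delicate point is justifying that the rank count is additive over the $H_k$ even when several $H_k$ are glued through shared unconstrained vertices into one component of $\mathcal{B}$. Here the claim is that the constrained rows remain independent. I would prove this by orienting a spanning forest of $\mathcal{B}$ toward an unconstrained root in each component, and eliminating constrained rows leaf-first. Along the way I would carefully track the degenerate cases $i=b$, $j=a$, and vertices appearing in both roles, since the two-copy convention is what makes the count correct there.
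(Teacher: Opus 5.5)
Your proof is correct, and it takes a genuinely different route from the paper's.

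\textbf{How the two routes differ.} The paper derives the formula from the structural classification of Section~3. It sorts minimal $(a,b)$-clusters by their number of terminal elements and exhibits three explicit families:
\begin{itemize}
\item $\mathcal{B}_0$, the fundamental cycles of a spanning forest of $H$;
\item $\mathcal{B}_1$, the single terms $e_{aijb}$ of terminal number $2$;
\item $\mathcal{B}_2$, for each $H_k$, the alternating paths joining a fixed edge of $S_k$ to each of the others.
\end{itemize}
It then argues independence using terms that occur in only one basis element, and spanning using the classification of minimal clusters.

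You bypass the classification altogether. In your version, $\partial$-invariance of an $(a,b)$-cluster becomes vertex-sum conditions at the constrained vertices of your bipartite graph $\mathcal{B}$. The dimension is then a rank count for rows of an incidence matrix. The identity $t+\sum_k\max\{0,|S_k|-1\}=\sum_k|S_k|+\#\{k:S_k=\emptyset\}$ reconciles your count with the stated formula.

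\textbf{What each route buys.} Yours is shorter, self-contained and valid over any field. It also treats the degenerate cases $i=b$, $j=a$, $a=b$ uniformly through the two-copy convention.

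The paper's route produces an explicit basis of trapezohedral merging images, which is its actual goal. Your framework recovers this as well. The solution space is the cycle space of $\mathcal{B}$ with all unconstrained vertices identified to a single vertex $g$; these are flows along arrows conserved at constrained vertices. A fundamental-cycle basis of that multigraph consists of:
\begin{itemize}
\item loops at $g$, which give the paper's $\mathcal{B}_1$;
\item cycles inside $H$, which give $\mathcal{B}_0$;
\item cycles through $g$ that enter and leave a single $H_k$, which give $\mathcal{B}_2$.
\end{itemize}

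\textbf{Two small remarks.}

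First, in Step~1 you should state explicitly that the coefficient of a middle face $e_{axb}$ in $\partial\omega$ is $\sum_j u^{xj}-\sum_i u^{ix}$. When $e_{axb}$ is non-allowed, only one of these two sums can be nonempty; for example, $x\in A\setminus B$ forces $x\notin N^-(b)$. You should also note that the outer faces $e_{ijb}$ and $e_{aij}$ never coincide with a face $e_{axb}$, because $i\neq a$ and $j\neq b$.

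Second, your ``main obstacle'' is already resolved by Step~2. The left kernel of the incidence matrix of a connected component $K$ is spanned by a vector all of whose entries are $\pm1$. Hence every proper subset of the rows of $K$ is independent, however many $H_k$ are glued together through unconstrained vertices. Since the rank is computed per component of $\mathcal{B}$, Step~3 only needs $\sum_K c_K=|V(H)|$ and the identification of entirely constrained components with the $H_k$ having $S_k=\emptyset$. No leaf-elimination argument is required.
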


As a direct corollary, by summing over all $a, b \in V(G)$, we obtain an explicit formula for the dimension of $\Omega_3(G)$, which can be stated as follows:

\begin{theorem}\label{thm:main3}
The dimension of $\Omega_3(G)$ is the sum of $\operatorname{dim}\Omega_3^{a,b}(G)$ for all pairs of vertices $(a,b)$. More precisely,
\begin{equation}
\begin{aligned}
    \dim \Omega_3(G) =& \sum_{a,b\in V(G)} \left(|E(\operatorname{Ind}(N^+(a)\setminus N^-(b), N^-(b)\setminus N^+(a)))|\right.\\
    &-\left.|V(\operatorname{Ind}(N^+(a)\setminus N^-(b), N^-(b)\setminus N^+(a)))|+t(a,b)\right)\\
    &+ \sum_{a,b\in V(G)}|E\bigl( N^{+}(a) \cap \bigl( N^{-}(b) \cup \{ b \} \bigr), \bigl( N^{+}(a) \cup \{ a \} \bigr) \cap N^{-}(b) \bigr)|\\
    &+ \sum_{a,b\in V(G)}\sum_{k=1}^{t(a,b)} \max\{0, |S_k(a,b)|-1\}. 
\end{aligned}
\end{equation}
\end{theorem}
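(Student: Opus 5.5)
The statement to prove is Theorem \ref{thm:main3}. It is a direct corollary of Theorem \ref{thm:main2} together with the cluster decomposition recalled at the start of this section, so the plan is short. First, I invoke Lemma \ref{lem:cluster} to get
$$\Omega_3(G)=\bigoplus_{a,b\in V(G)}\Omega_3^{(a,b)}(G).$$
The sum is direct because elementary $3$-paths with different endpoint pairs $(a,b)$ are linearly independent in $\mathcal{A}_3$. Moreover, the $(a,b)$-component $\omega_{a,b}$ of any $\omega\in\Omega_3$ is itself $\partial$-invariant. This is exactly the cancellation argument in the proof of Lemma \ref{lem:cluster}: the non-allowed terms of $\partial\omega_{a,b}$ are $(a,b)$-clusters and can only cancel within $\partial\omega_{a,b}$. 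Taking dimensions gives $\dim\Omega_3(G)=\sum_{a,b}\dim\Omega_3^{(a,b)}(G)$.

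Second, for each fixed pair $(a,b)$, I substitute the formula of Theorem \ref{thm:main2}. The key bookkeeping is to identify $H$ with $\operatorname{Ind}(N^+(a)\setminus N^-(b),\,N^-(b)\setminus N^+(a))$. With $A=N^+(a)\setminus\{b\}$ and $B=N^-(b)\setminus\{a\}$, we have $A\setminus B=N^+(a)\setminus(N^-(b)\cup\{b\})$. Since $b\notin N^+(a)\setminus N^-(b)$ unless $a\to b$, a small check is needed: the vertex $b$ (respectively $a$) may lie in one set but not the other. In that case it appears as an extra vertex. It is isolated in $H$, because $b$ has no out-edge into $N^-(b)\setminus N^+(a)$ except via edges that are themselves already counted. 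An isolated vertex contributes $+1$ to both $|V|$ and $t$, and $0$ to $|E|$, so the quantity $|E(H)|-|V(H)|+t$ is unchanged. Moreover, any isolated component has $S_k$ of size at most one, so it contributes $0$ to $\sum_k\max\{0,|S_k|-1\}$.

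The main (though mild) obstacle is verifying this boundary case: that removing or adding $a,b$ changes neither the cyclomatic term nor the $S_k$ term. In particular, I must check that an isolated vertex $b$ cannot acquire edges into $V(H)\setminus A$ that would make $|S_k|\ge 2$. I would handle this by noting that $S_k$ edges ending in $V(H_k)\setminus A$ start in $N^+(a)\cap(N^-(b)\cup\{b\})$ and end in $N^-(b)$, so $b$ itself is never the endpoint of such an edge. Summing the per-pair formula over all $(a,b)$ then yields exactly the displayed expression, with the middle and last sums copied verbatim.
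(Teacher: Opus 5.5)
Your reduction is exactly the paper's one-line argument: the direct sum $\Omega_3(G)=\bigoplus_{a,b}\Omega_3^{(a,b)}(G)$ from Lemma~\ref{lem:cluster}, followed by Theorem~\ref{thm:main2} pair by pair. You also correctly notice a real discrepancy. For $a\to b$, $a\neq b$, the digraph $H'=\operatorname{Ind}(N^+(a)\setminus N^-(b),\,N^-(b)\setminus N^+(a))$ in the statement is not the $H=\operatorname{Ind}(A\setminus B,B\setminus A)$ of Theorem~\ref{thm:main2}: it has the extra vertices $b$ on the source side and $a$ on the target side.

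The gap is your claim that these extra vertices are isolated. An edge of $H'$ at $b$ is an arrow $b\to j$ with $j\in(B\setminus A)\cup\{a\}$. An edge at $a$ is an arrow $i\to a$ with $i\in(A\setminus B)\cup\{b\}$. Each is one half of a double arrow, which is exactly what this paper permits. Such arrows are not ``already counted'' in $H$. They are counted in $\Delta_k^2$, in $\Delta_k^1$, or (for $b\to a$) in the middle term, so putting them into $H'$ counts them a second time.

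For instance, take the arrows $a\to b$, $a\to i$, $i\to j_1$, $i\to j_2$, $j_1\to b$, $j_2\to b$, $b\to j_1$, $b\to j_2$. Then $\Omega_3^{(a,b)}(G)$ is spanned by $e_{abj_1b}-e_{aij_1b}+e_{aij_2b}-e_{abj_2b}$. Theorem~\ref{thm:main2} correctly gives $0+0+(2-1)=1$, this generator being accounted for by $|S_1|-1$ with $S_1=\{b\to j_1,\,b\to j_2\}$. But $H'$ contains the $4$-cycle $b\,j_1\,i\,j_2$, so $|E(H')|-|V(H')|+t(H')=4-5+2=1$ instead of $0$. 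Meanwhile the component $\{b,i,j_1,j_2\}$ still has $S=\{b\to j_1,\,b\to j_2\}$. The same generator is counted twice, and the formula returns $2$.

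Your claim that an isolated component has $|S_k|\le 1$ fails too. Under the definition of $S_k$ in Theorem~\ref{thm:main2}, the component $\{a\}$ has $V(H_k)\setminus B=\{a\}$. So every arrow $a\to x$ with $x\in A\cap B$ lies in $S_k$, even when there are no double arrows. Your remark that $S_k$-edges end in $N^-(b)$ is not part of that definition, and in any case it only treats $b$.

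So the reconciliation you plan cannot be carried out. With $t(a,b)$ and $S_k(a,b)$ computed from $H'$, the displayed identity is false. With them taken from $H$ it is worse: the digraph with a single arrow $a\to b$ would get $\dim\Omega_3=-2$. The statement is correct only when its induced digraph is read as the $H$ of Theorem~\ref{thm:main2}, namely $\operatorname{Ind}\bigl(N^+(a)\setminus(N^-(b)\cup\{b\}),\,N^-(b)\setminus(N^+(a)\cup\{a\})\bigr)$. Under that reading it is literally the sum of Theorem~\ref{thm:main2} over all pairs $(a,b)$ and no boundary analysis is needed; this is what the paper tacitly assumes. You should make that reading explicit rather than try to prove that the extra vertices are harmless.
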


%在陈述这个定理之前，首先应该将这个定理背后的一些定义写出来，陈述这个定理需要一个subsection。然后把定理讲清楚之后，证明可以放到下一个subsection。

%我还没改完，先改到这里

The structure of this section is organized around the proof of Theorem \ref{thm:main2}. We begin by introducing the notion of \textit{terminal elements}. An elementary term $e_{aijb}$ is called a \textit{terminal element} if it satisfies at least one of the following two conditions: (i) $a \rightarrow j$ or $a = j$; (ii) $i \rightarrow b$ or $i = b$. More precisely, we define the \textit{terminal number} of $e_{aijb}$ to be the number of conditions among (i) and (ii) that hold. Thus each $e_{aijb}$ has terminal number $0$, $1$, or $2$. %According to the analysis in Section \ref{subsec2:structureomega3}, each generator contains at most two terminal elements with nonzero coefficients. 

Note that every elementary term $e_{aijb}\in A_3(G)$ satisfies $i\in N^+(a)$ and $j\in N^-(b)$, hence $(a,b)$-clusters correspond to subsets of directed edges from $A$ to $B$. According to the structural classification in Section \ref{subsec2:structureomega3}, we divide minimal generators into three types due to the number of terminal elements:
\begin{enumerate}
\item No terminal elements, which is discussed in Section \ref{sec:basis1}, it corresponds to a trapezohedron with endpoints $a$ and $b$, whose remaining vertices lie in $A \setminus B$ and $B \setminus A$.
\item Exactly one terminal element with terminal number 2, which is discussed in Section \ref{sec:basis2}. More precisely, each such generator consists of a single term $e_{aijb}$ with terminal number $2$, where $(i,j)$ is an edge inside $(A \cap B) \cup \{a,b\}$.
\item Exactly two terminal elements with terminal number 1, which is discussed in Section \ref{sec:basis3}. More precisely, each such generator contains two edges from $A \setminus B$ or $B \setminus A$ to $(A \cap B) \cup \{a,b\}$, while all other edges from $A$ to $B$ lie inside $\operatorname{Ind}(A \setminus B, B \setminus A)$.
\end{enumerate}
Note that in each case, we need to consider two subcases: $a \nrightarrow b$ and $a \rightarrow b$, which require slightly different treatments. Moreover, the case $a = b$ cannot be omitted; it is handled in the same way as the case $a \nrightarrow b$.

\subsection{Generators without terminal elements}\label{sec:basis1}

Concerning the number of terminal elements in a generator, note that the discussion in this case remains valid regardless of whether $a \to b$, $a \nrightarrow b$, or $a = b$. If a generator has no terminal elements, then every term $e_{a i j b}$ with nonzero coefficient must satisfy $i \nrightarrow b$ and $i \neq b$, as well as $a \nrightarrow j$ and $a \neq j$. Consequently, such terms arise exclusively from the directed bipartite graph from $A \setminus B$ to $B \setminus A$. Let $H$ be the induced subgraph $\operatorname{Ind}(A \setminus B, B \setminus A)$, and we regard $H$ as an undirected graph when discussing cycles.

\begin{center}
\begin{tikzpicture}[
    dot/.style={circle,fill,inner sep=1.2pt},
    set/.style={draw,ellipse,minimum width=2.4cm,minimum height=3cm},
    edge/.style={-Stealth,thick},
    rededge/.style={-Stealth,red,thick},
    blueedge/.style={-Stealth,blue,thick},
    node distance=1cm
]

% a,b
\node[dot,label=left:$a$] (a) at (-4,0) {};
\node[dot,label=right:$b$] (b) at (4,0) {};

% sets (A\B,B\A) 分开一点
\node[set,label=above:$A\setminus B$] (A) at (-1.7,2) {};
\node[set,label=above:$B\setminus A$] (B) at (1.7,2) {};
\node[set,minimum width=2.8cm,minimum height=1.6cm,
      label=below:$A\cap B$] (C) at (0,-2) {};

% points in A\B 竖着排列
\node[dot] (A1) at (-1.7,3.0) {};
\node[dot] (A2) at (-1.7,2.4) {};
\node[dot] (A3) at (-1.7,1.8) {};
\node[dot] (A4) at (-1.7,1.2) {};

% points in B\A 竖着排列
\node[dot] (B1) at (1.7,3.0) {};
\node[dot] (B2) at (1.7,2.4) {};
\node[dot] (B3) at (1.7,1.8) {};
\node[dot] (B4) at (1.7,1.2) {};

% 黑色结构箭头
\draw[edge] (a) -- ([xshift=-0pt,yshift=-0pt]A.west);
\draw[edge] ([xshift=-0pt,yshift=-0pt]B.east) -- (b); 
\draw[edge] (a) -- (C); % a -> A∩B
\draw[edge] (C) -- (b); % A∩B -> b

% 红色箭头
\draw[rededge] (a) -- (A1);
\draw[rededge] (a) -- (A2);
\draw[rededge] (a) -- (A3);
\draw[rededge] (a) -- (A4);

\draw[rededge] (B1) -- (b);
\draw[rededge] (B2) -- (b);
\draw[rededge] (B3) -- (b);
\draw[rededge] (B4) -- (b);

% 蓝色箭头 A -> B
\draw[blueedge] (A1) -- (B1);
\draw[blueedge] (A2) -- (B2);
\draw[blueedge] (A3) -- (B3);
\draw[blueedge] (A4) -- (B4);
\draw[blueedge] (A2) -- (B1);
\draw[blueedge] (A3) -- (B2);
\draw[blueedge] (A4) -- (B3);
\draw[blueedge] (A1) -- (B4);

\end{tikzpicture}
\end{center}

\begin{proposition}\label{pro:0}
Each cycle $C$ in $H$ gives rise to an $(a,b)$-cluster $\omega_C$ with no terminal elements. Conversely, every minimal generator without terminal elements corresponds to a cycle in $H$.
\end{proposition}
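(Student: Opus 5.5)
The plan is to treat both directions through the dictionary between elementary terms $e_{aijb}$ and edges $(i,j)$ from $A$ to $B$. A term $e_{aijb}$ with no terminal condition is exactly an edge of $H=\operatorname{Ind}(A\setminus B, B\setminus A)$. Such an edge also has $i\neq a$ and $j \ne b$, and $a\ne j$, $i\ne b$. Since $H$ is bipartite between $A\setminus B$ and $B\setminus A$, every cycle in $H$ (as an undirected graph) has even length $2m$ with $m\ge 2$. Its vertices alternate $i_0, j_0, i_1, j_1, \dots, i_{m-1}, j_{m-1}$ with edges $i_k\to j_k$ and $i_{k+1}\to j_k$ (indices mod $m$).

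\textbf{Forward direction.} For a cycle $C$, I will set
$$\omega_C=\sum_{k=0}^{m-1}\bigl(e_{a i_k j_k b}-e_{a i_{k+1} j_k b}\bigr).$$
The hypotheses of Lemma \ref{lem:image0} hold: $a\to i_k$ because $i_k\in A$, and $j_k\to b$ because $j_k\in B$. Hence $\omega_C\in\Omega_3$, and it is an $(a,b)$-cluster. All of its terms come from edges of $H$, so none is terminal. Since the vertices of a cycle are distinct, the $2m$ terms are distinct and $\omega_C\neq 0$.

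\textbf{Converse.} Let $\omega$ be a minimal generator (a minimal $\partial$-invariant $(a,b)$-cluster) with no terminal elements, and build the colored graph $\Gamma$ of Section \ref{subsec2:structureomega3}. Because no term is terminal, every vertex $e_{aijb}$ of $\Gamma$ satisfies both $a\nrightarrow j,\ a\ne j$ and $i\nrightarrow b,\ i\ne b$. Consequently the $E_1$-edges join exactly the pairs of terms sharing $j$, and the $E_2$-edges join exactly the pairs sharing $i$.

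\begin{itemize}
\item Case 1 of Theorem \ref{thm:thm} cannot occur, since it requires a term of terminal number $2$.
\item Case 3 cannot occur either. Proposition \ref{clm:alternating} would force the endpoint $v_0$ of a longest alternating path to satisfy a terminal condition. That proposition needs $\Gamma$ to have at least one edge, which is guaranteed by step 1 of Case 3.
\end{itemize}

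So we are in Case 2, and $\omega=c\,\omega_0$ with
$$\omega_0=\sum_{k=0}^{m-1}\bigl(e_{a i_k j_k b}-e_{a i_{k+1} j_k b}\bigr),\qquad m\ge2.$$
The vertices $i_k$ lie in $A\setminus B$, the $j_k$ lie in $B\setminus A$, and each $(i_k,j_k)$ and $(i_{k+1},j_k)$ is an edge of $H$. It remains to see that these edges form a cycle in $H$ and not merely a closed walk.

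\textbf{Main obstacle.} Alternating cycles in $\Gamma$ may revisit the same $i$ or $j$, so the closed walk $i_0 j_0 i_1 \dots j_{m-1} i_0$ might not be a simple cycle. I will take the alternating cycle in $\Gamma$ of minimal length and argue as follows.

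\begin{itemize}
\item The alternating cycle has distinct vertices of $\Gamma$, so consecutive edges of the walk are distinct edges of $H$; this already gives $i_k\neq i_{k+1}$ and $j_k\neq j_{k+1}$.
\item Suppose some vertex of $H$ repeats, say $i_p=i_q$ with $p\neq q$. Then the walk splits into two shorter closed walks. The terms $e_{a i_p j_{p-1} b}$ and $e_{a i_q j_q b}$ share $i$, so they are joined by an $E_2$-edge by Proposition \ref{clm:transitive}. This produces a strictly shorter alternating cycle using a proper subset of $e(\omega)$, contradicting minimality of length.
\item If the shortcut degenerates to a length-$2$ cycle, invoke Proposition \ref{clm:exclusion}. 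The case of a repeated $j$ is symmetric.
\item Alternatively, by Lemma \ref{lem:image0} the shorter sub-walk already yields a nonzero $\partial$-invariant path supported on a proper subset of $e(\omega)$, contradicting the minimality of $\omega$ via Lemma \ref{lem:minimal}.
\end{itemize}

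The delicate point is checking that the shortcut cycle is still alternating and nonempty, i.e. that the sign and color bookkeeping survives the splitting. I will handle it by the explicit index analysis above.

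Once all $i_k$ and $j_k$ are distinct, the edges form a simple cycle $C$ in $H$, and $\omega=c\,\omega_C$, as claimed.
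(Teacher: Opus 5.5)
Your argument is correct, but it takes a more roundabout route than the paper.

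The paper works directly in $H$. Since no term of $\omega$ is terminal, the non-allowed faces $e_{ajb}$ and $e_{aib}$ of each term $e_{aijb}$ must be cancelled by other terms sharing $j$, resp.\ $i$. So the support of $E_\omega$, viewed as a subgraph of $H$, has minimum degree at least two and contains a simple cycle $C$. Then $\omega_C$ is a nonzero $\partial$-invariant path supported in $e(\omega)$, and Lemma~\ref{lem:minimal} gives $\omega=c\,\omega_C$ immediately.

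You instead pass through $\Gamma$ and the case analysis of Theorem~\ref{thm:thm}. You exclude Case~1 and Case~3 (the latter via Proposition~\ref{clm:alternating}) and land in Case~2. That only yields a closed trail $i_0j_0i_1\cdots j_{m-1}i_0$ in $H$, which is why you need the extra repair step for repeated vertices.

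Your repair is sound, in both versions:
\begin{itemize}
\item The minimal-length alternating cycle shortcut works, with the length-$2$ degeneration excluded by Proposition~\ref{clm:exclusion}.
\item Applying Lemma~\ref{lem:image0} to the two sub-walks also works. Since $i_k\neq i_{k+1}$ and $j_k\neq j_{k+1}$, each sub-walk has length at least $2$, so it gives a nonzero $\partial$-invariant path on a proper subset of $e(\omega)$.
\end{itemize}

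Notice that this second version is exactly the paper's mechanism: find a smaller $\partial$-invariant piece inside $e(\omega)$ and invoke minimality. Applying it from the outset to a simple cycle in the minimum-degree-two support graph lets you skip $\Gamma$, the case analysis, and the repeated-vertex issue entirely. What your version buys in exchange is an explicit identification of the no-terminal generators with Case~2 of Theorem~\ref{thm:thm}. It also shows that the alternating cycle found there necessarily has a simple underlying cycle in $H$.
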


\begin{proof}
\begin{comment}
    Let $C = i_0 j_0 \cdots i_{m-1} j_{m-1} i_0$
be a cycle in $H$ such that $i_k \in A \setminus B$ and $j_k \in B \setminus A$. 
Then, for every $k = 0,1,\ldots,m-1$, we have
$$i_k \to j_k \quad \text{and} \quad i_{k+1} \to j_k,$$
where the indices are taken modulo $m$. 
By Lemma \ref{lem:image0}, the alternating sum
$$\omega_C= e_{a i_0 j_0 b} - e_{a i_1 j_0 b} + e_{a i_1 j_1 b} - \cdots + e_{a i_{m-1} j_{m-1} b} - e_{a i_0 j_{m-1} b}$$
belongs to $\Omega_3(G)$ and contains no terminal elements.
\end{comment}

Let $\omega$ be a minimal generator without terminal elements. By the structural description given in Section \ref{subsec2:structureomega3}, every vertex in the corresponding graph\footnote{The corresponding graph means the edges with nonzero coefficient in $E_\omega$.} has degree at least two. Hence, the graph contains a cycle $C$. By the preceding argument, $\omega_C \in \Omega_3(G)$. It then follows from Lemma \ref{lem:minimal} that there exists a scalar $c \in K$ such that $\omega = c\, \omega_C$. Therefore, $\omega$ corresponds to a cycle.
\end{proof}

Let $F$ be a spanning forest of $H$. For each edge $e \in E(H) \setminus E(F)$, there exists a unique cycle $C_e$ in $H$ such that
$$e \in C_e \quad \text{and} \quad C_e \setminus \{e\} \subseteq E(F).$$
This cycle $C_e$ is called the fundamental cycle determined by $e$ 
with respect to the spanning forest $F$. Since linear combinations of $(a,b)$-clusters without terminal 
elements again have no terminal elements, the collection of all 
such clusters forms a vector space, which we denote by 
$\Omega_3^{0}$.\footnote{Although $\Omega_3^{0}$ depends on $a$ and $b$, we omit them from the notation for convenience.} The following lemma provides a basis of $\Omega_3^{0}$.

\begin{lemma}\label{lem:b0}
$\mathcal{B}_0=\{\omega_{C_e} \mid e \in E(H) \setminus E(F)\}$ is a basis of $\Omega_3^{0}$.
\end{lemma}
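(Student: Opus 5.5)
The plan is to identify $\Omega_3^{0}$ with the cycle space of $H$ and transfer the standard fact that fundamental cycles form a basis of the cycle space. By (\ref{eq:Eomega}), a cluster $\omega\in\Omega_3^{0}$ is encoded by its weight function $E_\omega$ on $E(H)$, and the assignment $\omega\mapsto E_\omega$ is linear and injective.

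\textbf{Step 1: $\Omega_3^{0}$ is the cycle space.} Write a general allowed $(a,b)$-cluster without terminal elements as $\omega=\sum_{(i,j)\in E(H)}u^{ij}e_{aijb}$. Every term $e_{aijb}$ has $a\nrightarrow j$, $a\neq j$, $i\nrightarrow b$, $i\neq b$. In $\partial\omega$ the non-allowed terms are therefore exactly $e_{ajb}$ for $j\in B\setminus A$ and $e_{aib}$ for $i\in A\setminus B$. Their coefficients are $-\sum_i u^{ij}$ and $\sum_j u^{ij}$ respectively, and no other term of the cluster can cancel them. Hence $\omega\in\Omega_3$ if and only if $u$ has zero sum at every vertex of $H$. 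Orient every edge of $H$ from $A\setminus B$ to $B\setminus A$; this condition says precisely that $u$ is a flow in $H$, that is, $u\in\ker(\mathbb{K}^{E(H)}\to\mathbb{K}^{V(H)})$, the boundary (incidence) map. Conversely, each such $u$ gives an element of $\Omega_3^{0}$. So $\omega\mapsto E_\omega$ is an isomorphism from $\Omega_3^{0}$ onto the cycle space $Z(H)$.

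\textbf{Step 2: $\omega_C$ corresponds to the cycle.} Since $H$ is bipartite, a cycle $C=i_0j_0i_1j_1\cdots i_{m-1}j_{m-1}i_0$ alternates sides. The cluster $\omega_C$, as in Lemma~\ref{lem:image0}, has coefficients alternating between $+1$ on the edges $i_kj_k$ and $-1$ on the edges $i_{k+1}j_k$. This is exactly the signed indicator vector of $C$ under the orientation chosen in Step 1.

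\textbf{Step 3: fundamental cycles form a basis.} The set $\{\omega_{C_e}\}$ is linearly independent, because $e$ lies in $C_e$ and in no other $C_{e'}$, since $C_{e'}\setminus\{e'\}\subseteq E(F)$. For spanning, take $\omega\in\Omega_3^{0}$ and set
\[
\omega'=\omega-\sum_{e\in E(H)\setminus E(F)}u^{e}\,\omega_{C_e}.
\]
Then $\omega'\in\Omega_3^{0}$ and its support lies in $E(F)$. A nonzero flow supported on a forest is impossible: take a leaf of the support subforest; the flow condition at that leaf forces the coefficient of its unique support edge to vanish. Hence $\omega'=0$.

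In conclusion, the set has size $|E(H)|-|V(H)|+t$. The only real care point is Step 1: checking that the non-allowed boundary terms are exactly the $e_{aib}$ and $e_{ajb}$ with the stated coefficients, and that the interior terms $e_{ijb}$ and $e_{aij}$ are automatically allowed.
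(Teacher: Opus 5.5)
Your proof is correct and follows the paper's argument: independence holds because each non-forest edge $e$ occurs only in $\omega_{C_e}$, and spanning follows by subtracting $\sum u^{e}\omega_{C_e}$ and then using a leaf of the supporting subforest to show the remainder vanishes. The one real difference is how you get the zero-vertex-sum condition: you derive it directly from $\partial$-invariance for every element of $\Omega_3^{0}$, which identifies $\Omega_3^{0}$ with the cycle space of $H$. The paper instead reduces to minimal clusters and takes that condition from the structural fact (Proposition~\ref{pro:0}) that minimal clusters are cycles, so your version is more self-contained and avoids that reduction. One small point to state explicitly: normalize $\omega_{C_e}$ so that its coefficient on $e$ is $+1$.
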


\begin{proof}
For each edge $e = (i,j) \in E(H) \setminus E(F)$, let $C_e$ denote the fundamental cycle determined by $e$ with respect to the spanning forest $F$, and assume that the coefficient of $e_{aijb}$ in the representation of $\omega_{C_e}$ equals $+1$. To prove linear independence, suppose that
$$\sum_{e \in E(H)\setminus E(F)} \lambda_e \, \omega_{C_e} = 0.$$
Fix $e_0 \in E(H)\setminus E(F)$. 
By construction, the edge $e_0$ belongs to $C_{e_0}$ and does not 
belong to any other cycle $C_e$ with $e \neq e_0$. 
It follows that the coefficient of $e_0$ in the above sum equals 
$\lambda_{e_0}$, hence $\lambda_{e_0}=0$. 
Since $e_0$ was arbitrary, all coefficients vanish, and thus 
$\mathcal{B}_0$ is linearly independent. For the spanning property, by Lemma \ref{lem:cluster}, it suffices 
to show that every minimal $(a,b)$-cluster without terminal elements 
can be expressed as a linear combination of the elements in 
$\mathcal{B}_0$.

Let $\omega= \sum_{i_1\in A, i_2\in B} u^{i_1i_2} e_{a i_1 i_2b}\in\Omega_3^{(a,b)}(G)$ be a minimal $(a,b)$-cluster. Recall that $ E_\omega:=\sum_{i_1\in A, i_2\in B} u^{i_1i_2} e_{i_1 i_2}$ is defined in \eqref{eq:Eomega} and represents a linear combination of edges in $\operatorname{Ind}(A \setminus B, B \setminus A)$. Define $$\omega':=\omega-\sum_{i_1\in A, i_2\in B,(i_1,i_2)\notin E(F)}u^{i_1i_2}\omega_{C_{(i_1,i_2)}}\in\Omega_3^{(a,b)}(G).$$ 
 
It remains to prove that $\omega' = 0$. We first show that if $(i_1,i_2) \in E(H) \setminus E(F)$, then the coefficient of $e_{a i_1 i_2 b}$ in $\omega'$ vanishes. Indeed, for each edge $(i_1,i_2) \in E(H) \setminus E(F)$, the coefficient of $e_{a i_1 i_2 b}$ in $\omega$ is $u^{i_1 i_2}$, while its coefficient in $\omega_{C_{(i_1,i_2)}}$ is $1$. Moreover, for any distinct edge $(i_1',i_2') \neq (i_1,i_2)$ in $E(H) \setminus E(F)$, the coefficient of $e_{a i_1 i_2 b}$ in $\omega_{C_{(i_1',i_2')}}$ is $0$.

Second, we show that for each $i_1 \in A \setminus B$, the sum of the coefficients of $e_{a i_1 i_2 b}$ in $\omega'$ over all $i_2 \in B \setminus A$ with $(i_1,i_2) \in E(H)$ is zero. Indeed, this property holds for $\omega$ and for every $\omega_{C_{(i_1,i_2)}}$ because $E_\omega$ and each $C_{(i_1,i_2)}$ are cycles. Similarly, for each $i_2 \in B \setminus A$, the sum of the coefficients of $e_{a i_1 i_2 b}$ in $\omega'$ over all $i_1 \in A \setminus B$ with $(i_1,i_2) \in E(H)$ is zero.

Together with the two facts established above, we conclude that $\omega' = 0$. Indeed, suppose $\omega' \neq 0$. Then the set of edges with nonzero coefficients in $\omega'$ is contained in $E(F)$, and therefore forms a subgraph of the forest $F$. Consequently, there exists a vertex $v \in V(F)$ that is incident to exactly one edge with a nonzero coefficient in $\omega'$. This, however, contradicts the second fact. Therefore, $\mathcal{B}_0$ spans $\Omega_3^{0}$.
\end{proof}

Since the number of edges in a spanning forest of a graph equals the number of vertices minus the number of connected components, we obtain the following corollary:
\begin{corollary}\label{cor:b0}
Let $E(H)$, $V(H)$ denote the edge and vertex sets of $H$,
and let $t$ be the number of connected components of $H$.
Then 
$$|\mathcal{B}_0| = |E(H)| - |V(H)| + t.$$
\end{corollary}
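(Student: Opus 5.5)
The corollary follows from Lemma \ref{lem:b0} together with the standard count of edges in a spanning forest. By Lemma \ref{lem:b0}, the elements of $\mathcal{B}_0$ are indexed by the non-forest edges $e\in E(H)\setminus E(F)$. The assignment $e\mapsto \omega_{C_e}$ is injective, because each such $e$ occurs in $C_e$ and in no other fundamental cycle $C_{e'}$ with $e'\neq e$. This was already used in the linear independence argument, and it guarantees that distinct edges give distinct (indeed linearly independent) paths. Hence
$$|\mathcal{B}_0| = |E(H)\setminus E(F)| = |E(H)| - |E(F)|,$$
where the last equality uses $E(F)\subseteq E(H)$.

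It remains to compute $|E(F)|$. Here $H$ is regarded as an undirected graph with vertex set $V(H)=(A\setminus B)\cup(B\setminus A)$ and connected components $H_1,\dots,H_t$. A spanning forest $F$ restricts to a spanning tree $F_k$ of each component $H_k$. A tree on $n$ vertices has $n-1$ edges, which follows by induction on $n$ by removing a leaf. Summing over the components gives
$$|E(F)|=\sum_{k=1}^t\bigl(|V(H_k)|-1\bigr)=|V(H)|-t.$$
Substituting this into the previous display yields $|\mathcal{B}_0|=|E(H)|-|V(H)|+t$.

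The only point needing care is that $t$ must count every component of $H$, including isolated vertices of $A\setminus B$ or $B\setminus A$ that have no edges in $H$. Each isolated vertex is a trivial tree with $0=1-1$ edges, so the formula remains consistent. Note also that $H$ has no parallel edges, since it is built from the simple digraph $G$, so the tree edge count applies without modification.
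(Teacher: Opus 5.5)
Your proof is correct and follows the paper's own one-line argument: $|\mathcal{B}_0|=|E(H)\setminus E(F)|$, and a spanning forest has $|V(H)|-t$ edges. Your injectivity remark (each non-forest edge lies only in its own fundamental cycle) makes explicit a point the paper leaves implicit. One small quibble: $H$ has no parallel edges because $\operatorname{Ind}(A\setminus B,B\setminus A)$ keeps only arrows from $A\setminus B$ to $B\setminus A$, not because $G$ is simple, since a simple digraph may still have double arrows; in any case the forest edge count does not need this.
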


\subsection{Generators with exactly one terminal element}\label{sec:basis2}

This subsection studies the generators of $\Omega_3$ with exactly one terminal element. 

According to the analysis in Section \ref{subsec2:structureomega3}, if there is only one terminal element $e_{a i j b}$ in a generator, then we must have $a \to i \to j \to b$ and $(a \to j \text{ or } a = j)$ and $(i \to b \text{ or } i = b)$. Equivalently,
$$(i,j) \in E\bigl( N^{+}(a) \cap \bigl( N^{-}(b) \cup \{ b \} \bigr), \bigl( N^{+}(a) \cup \{ a \} \bigr) \cap N^{-}(b) \bigr).$$

If $a \nrightarrow b$, then $a \notin N^{-}(b)$ and $b \notin N^{+}(a)$. Consequently, the term $e_{a i j b}$ satisfies $i, j \in A \cap B$. If $a \rightarrow b$, then $a \in N^{-}(b)$ and $b \in N^{+}(a)$. Consequently, the term $e_{a i j b}$ satisfies $i \in (A \cap B) \cup \{b\}$ and $j \in (A \cap B) \cup \{a\}$.

\begin{center}
\begin{tikzpicture}[
    dot/.style={circle,fill,inner sep=1.2pt},
    set/.style={draw,ellipse,minimum width=2.4cm,minimum height=3cm},
    edge/.style={-Stealth,thick},
    rededge/.style={-Stealth,red,thick},
    blueedge/.style={-Stealth,blue,thick},
    node distance=1cm
]

% a,b
\node[dot,label=left:$a$] (a) at (-4,0) {};
\node[dot,label=right:$b$] (b) at (4,0) {};

% sets (A\B,B\A) 分开一点
\node[set,label=above:$A\setminus B$] (A) at (-1.7,2) {};
\node[set,label=above:$B\setminus A$] (B) at (1.7,2) {};
\node[set,minimum width=2.8cm,minimum height=1.6cm,
      label=below:$A\cap B$] (C) at (0,-2) {};

\node[dot] (C1) at (-0.4,-2) {};
\node[dot] (C2) at (0.4,-2) {};

% 黑色结构箭头
\draw[edge] (a) -- ([xshift=-0pt,yshift=-0pt]A.west);
\draw[edge] ([xshift=-0pt,yshift=-0pt]B.east) -- (b); 
\draw[edge] (a) -- (C.west); % a -> A∩B
\draw[edge] (C.east) -- (b); % A∩B -> b

\draw[rededge] (a) -- (C1);
\draw[rededge] (C2) -- (b);
\draw[rededge] (a) -- (C2);
\draw[rededge] (C1) -- (b);

\draw[blueedge] (C1) -- (C2);

\end{tikzpicture}
\end{center}

\begin{proposition}\label{pro:1}
Each directed edge 
$$(i,j) \in E\bigl( N^{+}(a) \cap \bigl( N^{-}(b) \cup \{ b \} \bigr), \bigl( N^{+}(a) \cup \{ a \} \bigr) \cap N^{-}(b) \bigr)$$
determines a minimal generator of $\Omega_3^{(a,b)}(G)$, and these generators are linearly independent.  
\end{proposition}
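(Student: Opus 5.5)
The generator attached to an edge $(i,j)$ in the displayed edge set is the single elementary path $\omega_{(i,j)}:=e_{aijb}$. We check that it lies in $\Omega_3^{(a,b)}(G)$, that it is minimal, and that these paths are linearly independent.

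\textbf{Membership.} Since $i\in N^+(a)$, $j\in N^-(b)$ and $i\to j$, the path $a\to i\to j\to b$ is allowed. Moreover $i\in N^-(b)\cup\{b\}$ gives ($i\to b$ or $i=b$), and $j\in N^+(a)\cup\{a\}$ gives ($a\to j$ or $a=j$). These are exactly the hypotheses of Lemma \ref{lem:image1}. Hence $e_{aijb}\in\Omega_3(G)$, and it is a merging image of $\tau_2$. Being a single term with endpoints $a,b$, it is an $(a,b)$-cluster, so $e_{aijb}\in\Omega_3^{(a,b)}(G)$.

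\textbf{Minimality.} The width of $e_{aijb}$ is $1$. The only nonempty subset of $e(\omega_{(i,j)})$ is therefore the whole set, so there is no nonempty proper subset to test. Thus $e_{aijb}$ is a minimal $\partial$-invariant cluster.

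\textbf{Linear independence.} Distinct edges $(i,j)\neq(i',j')$ give distinct elementary paths $e_{aijb}\neq e_{ai'j'b}$ in $\Lambda_3$. Elementary paths form a basis of $\Lambda_3$, and allowed paths are regular, so they remain independent in $\mathcal{R}_3$. Hence the family $\{e_{aijb}\}$ is linearly independent.

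\textbf{Degenerate cases.} The only point needing care is when $a\to b$, where $i=b$ or $j=a$ may occur. For example, $(b,j)$ requires $a\to b$ and $b\to j\to b$, and then $e_{abjb}$ is still a regular allowed path. Lemma \ref{lem:image1} explicitly allows non-distinct vertices, so these cases need no separate treatment. The case $i=b$ and $j=a$ simultaneously gives the path $e_{abab}$, which is regular because $a\neq b$ (the case $a=b$ does not arise here: $i=b$ would force $i=a$, but $a\to a$ is excluded since $E$ contains no loops). This path already appears as the example of the remark in Section \ref{sec:prelim}.
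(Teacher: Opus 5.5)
Your proof is correct and follows the same route as the paper: membership via Lemma~\ref{lem:image1}, and linear independence because distinct edges give distinct elementary paths. The paper's text cites Lemma~\ref{lem:image0} at this point, but Lemma~\ref{lem:image1} is the one that covers a single term, so your citation is the right one; your remark that a width-one path is automatically minimal also states a point the paper leaves implicit.
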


\begin{proof}
If $(i,j) \in E\bigl( N^{+}(a) \cap \bigl( N^{-}(b) \cup \{ b \} \bigr), \bigl( N^{+}(a) \cup \{ a \} \bigr) \cap N^{-}(b) \bigr)$, then $a \to i \to j \to b$ and  $(a \to j \text{ or } a = j)$ and $(i \to b \text{ or } i = b)$. By Lemma \ref{lem:image0}, we have $e_{a i j b} \in \Omega_3(G)$. Distinct edges $i \to j$ give rise to distinct elementary basis elements $e_{a i j b}$ in $A_3(G)$. Consequently, the corresponding generators are linearly independent.
\end{proof}

Let 
$$\mathcal{B}_1 = \{ e_{a i j b} \mid (i,j) \in E\bigl( N^{+}(a) \cap \bigl( N^{-}(b) \cup \{ b \} \bigr), \bigl( N^{+}(a) \cup \{ a \} \bigr) \cap N^{-}(b) \bigr) \}.$$ 
We have $|\mathcal{B}_1|=|E\bigl( N^{+}(a) \cap \bigl( N^{-}(b) \cup \{ b \} \bigr), \bigl( N^{+}(a) \cup \{ a \} \bigr) \cap N^{-}(b) \bigr)|$. Since the elements in $\mathcal{B}_0$ do not involve any terminal elements, it follows that $\mathcal{B}_0 \cup \mathcal{B}_1$ is linearly independent.

\subsection{Generators with two terminal elements}\label{sec:basis3}
This subsection studies generators containing two terminal elements. Such a generator $\omega$ can be described as follows. Recall that $E_\omega$ is defined in \eqref{eq:Eomega}. The edges in $G$ with nonzero coefficients in $E_\omega$ form a path\footnote{This path is different from the path of digraphs defined above, it do not require the direction of each edge.} $ i_0 i_1, i_1 i_2, \dots, i_{t-1} i_t $, where $i_0\in( N^{+}(a) \cup \{ a \} \bigr) \cap N^{-}(b)$, $i_t\in\bigl(  N^{+}(a) \cap \bigl( N^{-}(b) \cup \{ b \} \bigr)$ and $i_1, \dots, i_{t-1} \in (A \setminus B) \cup (B \setminus A)$, note that the direction of edges are ignored in this setting. The coefficients of these edges are $\pm 1$, and the coefficients of any two adjacent edges have opposite signs.

\begin{center}
\begin{tikzpicture}[
    dot/.style={circle,fill,inner sep=1.2pt},
    set/.style={draw,ellipse,minimum width=2.4cm,minimum height=3cm},
    edge/.style={-Stealth,thick},
    rededge/.style={-Stealth,red,thick},
    blueedge/.style={-Stealth,blue,thick},
    node distance=1cm
]

% a,b
\node[dot,label=left:$a$] (a) at (-4,0) {};
\node[dot,label=right:$b$] (b) at (4,0) {};

% sets (A\B,B\A) 分开一点
\node[set,label=above:$A\setminus B$] (A) at (-1.7,2) {};
\node[set,label=above:$B\setminus A$] (B) at (1.7,2) {};
\node[set,minimum width=2.8cm,minimum height=1.6cm,
      label=below:$A\cap B$] (C) at (0,-2) {};

% points in A\B 竖着排列
\node[dot] (A1) at (-1.7,3.0) {};
\node[dot] (A2) at (-1.7,2.4) {};
\node[dot] (A3) at (-1.7,1.8) {};
\node[dot] (A4) at (-1.7,1.2) {};

% points in B\A 竖着排列
\node[dot] (B1) at (1.7,3.0) {};
\node[dot] (B2) at (1.7,2.4) {};
\node[dot] (B3) at (1.7,1.8) {};
\node[dot] (B4) at (1.7,1.2) {};

\node[dot] (C1) at (-0.4,-2) {};
\node[dot] (C2) at (0.4,-2) {};

% 黑色结构箭头
\draw[edge] (a) -- ([xshift=-0pt,yshift=-0pt]A.west);
\draw[edge] ([xshift=-0pt,yshift=-0pt]B.east) -- (b); 
\draw[edge] (a) -- (C); % a -> A∩B
\draw[edge] (C) -- (b); % A∩B -> b

% 红色箭头
\draw[rededge] (a) -- (A1);
\draw[rededge] (a) -- (A2);
\draw[rededge] (a) -- (A3);
\draw[rededge] (a) -- (A4);

\draw[rededge] (B1) -- (b);
\draw[rededge] (B2) -- (b);
\draw[rededge] (B3) -- (b);
\draw[rededge] (B4) -- (b);

\draw[rededge] (a) -- (C1);
\draw[rededge] (C2) -- (b);

% 蓝色箭头 A -> B
\draw[blueedge] (A1) -- (B1);
\draw[blueedge] (A2) -- (B2);
\draw[blueedge] (A3) -- (B3);
\draw[blueedge] (A4) -- (B4);
\draw[blueedge] (A2) -- (B1);
\draw[blueedge] (A3) -- (B2);
\draw[blueedge] (A4) -- (B3);
\draw[blueedge] (A1) -- (C2);
\draw[blueedge] (C1) -- (B4);

\end{tikzpicture}
\end{center}

Suppose that $H=\operatorname{Ind}(A\setminus B,B\setminus A)$ is decomposed into connected components
$$H = H_1 \sqcup \cdots \sqcup H_t.$$
For each $k = 1,\ldots,t$, we have
$$H_k = \operatorname{Ind}\big[(A \setminus B)_k , (B \setminus A)_k\big],$$
where $(A \setminus B)_k$ and $(B \setminus A)_k$ represents the intersection of $V(H_k)$ and $A \setminus B$ and $B \setminus A$. For each component $H_k$, define
$$\Delta_k^1 = E \bigl( (A\setminus B)_k,\ \bigl( N^{+}(a) \cup \{ a \} \bigr) \cap N^{-}(b) \bigr),$$
$$\Delta_k^2 = E \bigl(  N^{+}(a) \cap \bigl( N^{-}(b) \cup \{ b \} \bigr),\  (B\setminus A)_k \bigr).$$

Note that the edge set $E(A,B)$ is defined in Definition \ref{def:edge}. Equivalently, if $a \nrightarrow b$,
$$\Delta_k^1 = E \bigl(  (A\setminus B)_k,\  A\cap B \bigr),$$
$$\Delta_k^2 = E \bigl(  A\cap B,\  (B\setminus A)_k \bigr),$$
if $a \rightarrow b$,
$$\Delta_k^1 = E \bigl(  (A\setminus B)_k,\  (A\cap B) \cup \{ a \}  \bigr),$$
$$\Delta_k^2 = E \bigl(  (A\cap B) \cup \{ b \},\  (B\setminus A)_k \bigr).$$

Set $S_k=\Delta_k^1\cup\Delta_k^2$. We have the following proposition, which states that every pair of distinct edges determines a generator:
\begin{proposition}
If $|S_k|=n_k\ge2$, every unordered pair of distinct edges in $S_k$ determines a generator of $\Omega_3^{(a,b)}(G)$.
\end{proposition}

\begin{proof}
If $|\Delta_k^1 \cup \Delta_k^2|\ge2$, let $(i_0,j_0)$ and $(i',j')$ be an unordered pair of distinct edges in $\Delta_k^1 \cup \Delta_k^2$. If both $(i_0,j_0)$ and $(i',j')$ lie in $\Delta_k^2$, then $i_0,i' \in N^{+}(a) \cap \bigl( N^{-}(b) \cup \{ b \} \bigr)$ and $j_0,j' \in (B \setminus A)_k$. By connectivity, there exists a path $j_0 i_1 j_1 \cdots i_m j'$,\footnote{This path is different from the path of digraphs defined above, it do not require the direction of each edge.} where $i_t \in (A \setminus B)_k$ and $j_t \in (B \setminus A)_k$ for all relevant indices $t$. Writing $(i',j') = (i_{m+1}, j_m)$, we have $i_t \to j_t \text{ and } i_{t+1}\to j_t  \text{ for all } t=0,1,\ldots,m.$
Hence, by Lemma \ref{lem:image3},
$$\omega = e_{a i_0 j_0 b} - e_{a i_1 j_0 b} + e_{a i_1 j_1 b} - \cdots + e_{a i_m j_m b} - e_{a i_{m+1} j_m b} \in \Omega_3.$$
The case $(i_0,j_0)$ and $(i',j')$ lie in $\Delta_k^1$ can be treated analogously.

If $(i_0,j_0)$ lie in $\Delta_k^2$ and $(i',j')$ lie in $\Delta_k^1$, then $i_0\in N^{+}(a) \cap \bigl( N^{-}(b) \cup \{ b \} \bigr), j' \in \bigl( N^{+}(a) \cup \{ a \} \bigr) \cap N^{-}(b) $ and $j_0 \in (B \setminus A)_k$, $i' \in (A \setminus B)_k$.  By connectivity, there exists a path $j_0 i_1 j_1 \cdots j_{m-1} i'$, where $i_t \in (A \setminus B)_k$ and $j_t \in (B \setminus A)_k$ for all relevant indices $t$. Writing $(i',j') = (i_m, j_m)$, we have
$i_t \to j_t  \text{ for all } t=0,1,\ldots,m  \text{ and }  i_{t+1} \to j_t \text{ for all } t=0,1,\ldots,m-1.$
Hence, by Lemma \ref{lem:image2},
$$\omega = e_{a i_0 j_0 b} - e_{a i_1 j_0 b} + e_{a i_1 j_1 b} - \cdots + e_{a i_m j_m b} - e_{a i_{m+1} j_m b} \in \Omega_3.$$

For different paths, the coefficients of $e_{a i_0 j_0 b}$ and 
$e_{a i' j' b}$ in the corresponding $\omega$ remain unchanged. 
Consequently, their difference contains no terminal elements and is 
therefore linearly dependent on the elements of $\mathcal{B}_0$.
Hence, every unordered pair of distinct edges in $\Delta_k^1 \cup \Delta_k^2$ determines a generator of $\Omega_3(G)$.
\end{proof}

Enumerate the edges in $\Delta_k^1 \cup \Delta_k^2$ as $(i^k_0,j^k_0),\ldots,(i^k_{n_k-1},j^k_{n_k-1}).$
For each $s \in \{1,2,\ldots,n_k-1\}$, let $\omega^k_s$ denote the element of $\Omega^{(a,b)}_3(G)$ determined by the pair $(i^k_0,j^k_0)$ and $(i^k_s,j^k_s)$. 
Then, for any $s,t \in \{1,\ldots,n_k-1\}$, the element determined by 
$(i^k_s,j^k_s)$ and $(i^k_t,j^k_t)$ can be expressed as $\omega^k_s - \omega^k_t$. Let 
$$\mathcal{B}_2 = \bigcup_{k=1}^t \{ \omega_s^k \mid 1 \le s \le n_k - 1 \}.$$
We compute the number of elements in $\mathcal{B}_2$.

\begin{corollary}\label{cor:b2}
$$|\mathcal{B}_2|=\sum_{k=1}^t \max\{0, |S_k|-1\}.$$
\end{corollary}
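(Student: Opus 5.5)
The count follows directly from the construction of $\mathcal{B}_2$, so the plan is simply to count its elements component by component. First I will check that the sets $\{\omega^k_s \mid 1\le s\le n_k-1\}$ for different $k$ are indexed disjointly. Here $n_k=|S_k|$, and the union in the definition of $\mathcal{B}_2$ is indexed by pairs $(k,s)$. The sets $S_k=\Delta_k^1\cup\Delta_k^2$ are pairwise disjoint for distinct $k$. Indeed, an edge of $\Delta_k^1$ starts in $(A\setminus B)_k\subseteq V(H_k)$, and an edge of $\Delta_k^2$ ends in $(B\setminus A)_k\subseteq V(H_k)$. The vertex sets $V(H_k)$ are disjoint because the $H_k$ are distinct connected components. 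Moreover, $\Delta_k^1\cap\Delta_k^2=\emptyset$: an edge in both would start in $A\setminus B$ and also in $N^+(a)\cap(N^-(b)\cup\{b\})\subseteq B\cup\{b\}$, which is impossible since $b\notin A$ by the definition $A=N^+(a)\setminus\{b\}$.

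Next, for a fixed $k$, the construction produces exactly one element $\omega^k_s$ for each $s\in\{1,\dots,n_k-1\}$ when $n_k\ge 2$. When $n_k\le1$ there is no $s$ in this range and no pair of distinct edges, so the construction produces nothing. In both cases the number of indices is $\max\{0,n_k-1\}$. Summing over $k=1,\dots,t$ gives $\sum_{k}\max\{0,|S_k|-1\}$.

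The only real obstacle is showing that these indexed elements are pairwise distinct, so that $|\mathcal{B}_2|$ is not smaller than the number of indices. I would settle this through terminal elements. The generator $\omega^k_s$ contains, with nonzero coefficient, exactly the two terminal elements $e_{a i^k_0 j^k_0 b}$ and $e_{a i^k_s j^k_s b}$, corresponding to the edges $(i^k_0,j^k_0)$ and $(i^k_s,j^k_s)$ of $S_k$. All its other terms come from edges of $H$, which are not terminal. Two generators with different index pairs $(k,s)\ne(k',s')$ therefore have different sets of terminal terms, because the sets $S_k$ are disjoint and the edges within each $S_k$ are enumerated distinctly. Hence the generators are distinct.

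The same observation yields linear independence, which is what the dimension formula needs later. Fix a component $k$. For each $s\ge1$, the terminal term $e_{a i^k_s j^k_s b}$ appears in $\omega^k_s$ and in no other element of $\mathcal{B}_2$. So the family is triangular with respect to these terms, and the count $\sum_{k=1}^t\max\{0,|S_k|-1\}$ equals $|\mathcal{B}_2|$.
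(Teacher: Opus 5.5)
Your proof is correct and takes the same approach as the paper. The paper treats the count as immediate from the definition of $\mathcal{B}_2$, and the distinctness you establish rests on the same observation the paper uses in Lemma \ref{lem:Bindependent}: the terminal term $e_{a i^k_s j^k_s b}$ occurs only in $\omega^k_s$. One small point: your start-vertex argument for $\Delta_k^1\cap\Delta_k^2=\emptyset$ never uses $k$, so it also gives $\Delta_k^1\cap\Delta_{k'}^2=\emptyset$ for $k\neq k'$, which the vertex-disjointness of the components $H_k$ alone does not settle.
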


\subsection{Completion of the proof}\label{sec:basis4}

We complete the proof of Theorem \ref{thm:main2} in this subsection. It remains to show that the elements of $\mathcal{B}_0 \cup \mathcal{B}_1 \cup \mathcal{B}_2$ form a basis of $\Omega_3(G)$. In other words, we must prove that these elements are linearly independent in Lemma \ref{lem:Bindependent} and that they span $\Omega_3(G)$ in Lemma \ref{lem:Blinearspan}.

\begin{lemma}\label{lem:Bindependent}
The set 
$$\mathcal{B}_{a b}=\mathcal{B}_0 \cup \mathcal{B}_1 \cup \mathcal{B}_2$$
is linearly independent.
\end{lemma}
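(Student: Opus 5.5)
The plan is to suppose a vanishing linear combination
$$\sum_{e\in E(H)\setminus E(F)} \lambda_e\,\omega_{C_e}+\sum_{(i,j)} \mu_{ij}\, e_{aijb}+\sum_{k=1}^t\sum_{s=1}^{n_k-1}\nu^k_s\,\omega^k_s=0,$$
and show that all coefficients vanish. I will read off the coefficients of the terminal elementary terms first, then reduce to Lemma \ref{lem:b0}. The whole argument rests on one bookkeeping fact. Every elementary term $e_{aijb}$ in any element of $\mathcal{B}_0\cup\mathcal{B}_1\cup\mathcal{B}_2$ corresponds to an edge $(i,j)$ lying in exactly one of three disjoint classes:
\begin{itemize}
\item $E(H)$, which has terminal number $0$;
\item the edge set $E\bigl(N^+(a)\cap(N^-(b)\cup\{b\}),(N^+(a)\cup\{a\})\cap N^-(b)\bigr)$ indexing $\mathcal{B}_1$, which has terminal number $2$;
\item $\bigcup_k S_k$, which has terminal number exactly $1$.
\end{itemize}
The last class splits further into the pairwise disjoint sets $S_k$, one for each component, since an edge in $\Delta^1_k$ starts in $V(H_k)$ and an edge in $\Delta^2_k$ ends in $V(H_k)$.

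\textbf{Step 1.} Take an edge $(i,j)$ that indexes an element of $\mathcal{B}_1$. The term $e_{aijb}$ has terminal number $2$. It appears in no $\omega_{C_e}$, because those contain only terminal-number-$0$ terms. It appears in no $\omega^k_s$, because by the construction in Lemma \ref{lem:image2} and Lemma \ref{lem:image3} the only terminal terms there are the two endpoint edges, both in $S_k$. Comparing coefficients of $e_{aijb}$ therefore gives $\mu_{ij}=0$.

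\textbf{Step 2.} Fix $k$ and $s\ge1$, and look at the coefficient of $e_{a i^k_s j^k_s b}$. This term occurs in $\omega^k_s$ with coefficient $\pm1$. It cannot occur in $\omega^{k}_{s'}$ for $s'\neq s$, since that element's only terminal terms are the edges $(i^k_0,j^k_0)$ and $(i^k_{s'},j^k_{s'})$. It cannot occur in any $\omega^{k'}_{s'}$ with $k'\neq k$, because the sets $S_{k'}$ are disjoint from $S_k$. It does not occur in $\mathcal{B}_0$ or $\mathcal{B}_1$ by terminal number. Hence $\nu^k_s=0$.

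\textbf{Step 3.} What remains is $\sum\lambda_e\omega_{C_e}=0$, and Lemma \ref{lem:b0} then gives $\lambda_e=0$. This completes the independence argument.

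The main obstacle is making Step 2 rigorous, and specifically certifying that the intermediate terms of $\omega^k_s$ really are non-terminal. The path $j_0 i_1 j_1\cdots$ chosen by connectivity inside $H_k$ uses vertices of $(A\setminus B)_k$ and $(B\setminus A)_k$, so its interior edges lie in $E(H)$. I should also make sure the path can be chosen simple, so that no interior edge coincides with an endpoint edge. This holds because the endpoint edges involve a vertex outside $A\setminus B$ or $B\setminus A$, namely in $(A\cap B)\cup\{a,b\}$.

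A further degenerate case needs checking: when $a\to b$, the vertices $a$ and $b$ may appear as $j$ and $i$ respectively in endpoint edges. Such a term $e_{aabb}$-like irregular path cannot arise, since $(i,j)$ is a genuine edge. So the classification by terminal number, together with the disjointness of the $S_k$, is clean, and the triangular structure above gives independence.
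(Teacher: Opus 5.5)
Your proof is correct and follows essentially the same route as the paper. Both arguments rest on the fact that the terminal term $e_{a i^k_s j^k_s b}$ occurs only in $\omega^k_s$, which kills the $\mathcal{B}_2$ coefficients and reduces everything to the independence of $\mathcal{B}_0\cup\mathcal{B}_1$ (terminal number $2$ versus $0$) and Lemma \ref{lem:b0}. Your terminal-number classification and the disjointness of the $S_k$ make explicit what the paper asserts without justification; for the mixed case $\Delta^1_k\cap\Delta^2_{k'}=\emptyset$, note that $\Delta^1$-edges satisfy only condition (i) and $\Delta^2$-edges only condition (ii).
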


\begin{proof}

We have already shown in Section \ref{sec:basis2} that $\mathcal{B}_0 \cup \mathcal{B}_1$ is linearly independent. It remains to prove that this independence is preserved after adding the elements of $\mathcal{B}_2$. For each pair $(s,k)$ with $1 \le s \le n_k - 1$, the term $e_{a i_s^k j_s^k b}$ appears only in $\omega_s^k$ among all elements of $\mathcal{B}_0 \cup \mathcal{B}_1 \cup \mathcal{B}_2$.  Therefore, in any linear relation among the elements of $\mathcal{B}_0 \cup \mathcal{B}_1 \cup \mathcal{B}_2$, the coefficient of $\omega_s^k$ must vanish. It follows that all coefficients are zero, and hence the union $B_{a b}$ is linearly independent.
\end{proof}

\begin{lemma}\label{lem:Blinearspan}
Every minimal $\partial$-invariant $(a,b)$-cluster belongs to the 
linear span $\langle \mathcal{B}_{ab} \rangle$.
\end{lemma}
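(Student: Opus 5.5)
We use Theorem \ref{thm:thm} to enumerate the possible shapes of a minimal $\partial$-invariant $(a,b)$-cluster $\omega$, and in each case express $\omega$ explicitly in terms of $\mathcal{B}_0$, $\mathcal{B}_1$ and $\mathcal{B}_2$. By Theorem \ref{thm:thm} and Lemma \ref{lem:minimal}, up to a nonzero scalar, $\omega$ falls into one of four cases:
\begin{enumerate}
\item a single term $e_{aijb}$ of terminal number $2$ (Lemma \ref{lem:image1});
\item an alternating cyclic sum as in Lemma \ref{lem:image0};
\item an alternating path sum as in Lemma \ref{lem:image2};
\item an alternating path sum as in Lemma \ref{lem:image3}.
\end{enumerate}
The plan is to classify the terms of $\omega$ by terminal number and subtract known basis elements until nothing remains.

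\textbf{Case (1).} Here $(i,j)$ lies in $E\bigl(N^{+}(a)\cap(N^{-}(b)\cup\{b\}),(N^{+}(a)\cup\{a\})\cap N^{-}(b)\bigr)$, so $\omega\in\langle\mathcal{B}_1\rangle$ immediately.

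\textbf{Case (2).} No term of a minimal cycle-type cluster can be terminal. Suppose some $e_{a i_k j_k b}$ were terminal, say $a\to j_k$ or $a=j_k$. Then the cancellation of $e_{aj_kb}$ between the two terms containing $j_k$ is no longer needed. We would then split $\omega$ into a smaller $\partial$-invariant path of the type of Lemma \ref{lem:image3} or Lemma \ref{lem:image1} supported on a proper subset of $e(\omega)$. This contradicts minimality unless that sub-path equals $\omega$, in which case $\omega$ already belongs to another case.

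So we may assume all $i_k\in A\setminus B$ and all $j_k\in B\setminus A$. The terms then trace a closed walk in $H$, and hence lie in a single component. Since $\omega\in\Omega_3^{0}$, Lemma \ref{lem:b0} gives $\omega\in\langle\mathcal{B}_0\rangle$.

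\textbf{Cases (3) and (4).} The two end terms $e_{ai_0j_0b}$ and $e_{a i_{m+1}j_mb}$ (resp.\ $e_{ai_mj_mb}$) are terminal. By the same minimality argument, none of the interior terms is terminal. Consequently:
\begin{itemize}
\item the interior vertices lie in $(A\setminus B)\cup(B\setminus A)$;
\item consecutive interior vertices are joined by edges of $H$, so they lie in one component $H_k$;
\item each end edge joins $V(H_k)$ to $(N^{+}(a)\cup\{a\})\cap N^{-}(b)$ or from $N^{+}(a)\cap(N^{-}(b)\cup\{b\})$, i.e.\ it lies in $S_k=\Delta_k^1\cup\Delta_k^2$.
\end{itemize}
A degenerate end term of terminal number $2$ is excluded, since it would give Case (1) on a proper subset.

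Let $\sigma,\sigma'\in S_k$ be the two end edges, and let $\eta$ be the generator built in the previous Proposition from the pair $\{\sigma,\sigma'\}$. After normalizing signs so that the end coefficients agree, $\eta$ is one of $\pm\omega^k_s$, $\pm(\omega^k_s-\omega^k_t)$ (this is the observation just before Corollary \ref{cor:b2}), so $\eta\in\langle\mathcal{B}_2\rangle$. The difference $\omega-\eta$ is $\partial$-invariant and has no terminal elements: its end coefficients cancel, and all interior edges lie in $H$. Hence $\omega-\eta\in\Omega_3^{0}=\langle\mathcal{B}_0\rangle$ by Lemma \ref{lem:b0}.

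The main obstacle is the claim that interior terms are non-terminal and that the end terms land exactly in $S_k$ for a single $k$. This requires a careful minimality argument: whenever an interior term is terminal, truncating the alternating sum there yields a $\partial$-invariant sub-sum by Lemma \ref{lem:image2} or Lemma \ref{lem:image3}. One must also treat $a\to b$ separately, since then $i=b$ or $j=a$ can occur and the end vertices may be $a$ or $b$ rather than vertices of $A\cap B$.
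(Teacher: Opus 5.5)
Your proposal is correct and is essentially the paper's argument. You sort minimal clusters by the shapes of Theorem \ref{thm:thm} rather than by number of terminal elements, but the resulting split is the same: a single term in $\mathcal{B}_1$, a terminal-free cycle in $\langle\mathcal{B}_0\rangle$, or a path whose two end edges lie in one $S_k$ and which agrees with an element of $\langle\mathcal{B}_2\rangle$ modulo $\langle\mathcal{B}_0\rangle$. Your truncation-plus-minimality argument for the interior terms supplies detail the paper omits. There are two harmless imprecisions: a minimal cycle-shaped sum can carry one terminal pair (e.g.\ $a\to j_0$ only), so it is really a path case as your escape clause allows; and the pair-generator $\eta$ equals $\pm\omega^k_s$ or $\pm(\omega^k_s-\omega^k_t)$ only modulo $\langle\mathcal{B}_0\rangle$.
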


\begin{proof}
    According to the analysis above, every minimal $\partial$-invariant cluster with no terminal elements belongs to $\mathcal{B}_0$, and every such cluster with exactly one terminal element belongs to $\mathcal{B}_1$.

    The remaining case is the minimal $\partial$-invariant cluster $\omega$ with two terminal elements. In this case, $E_\omega$ forms an undirected path whose endpoints lie in $A \cap B$ and whose internal vertices lie in $A \setminus B$ or $B \setminus A$. Consequently, both the first and last edges of the path go from the same connected component of $\operatorname{Ind}(  A \setminus B,B \setminus A)$ to $A \cap B$. In this case, every minimal $\partial$-invariant cluster with two terminal elements is linearly dependent modulo $\mathcal{B}_0$ on an element of the form $\omega_s^k - \omega_t^k$. In particular, every minimal $\partial$-invariant $(a,b)$-cluster with two terminal elements lies in the linear span $\langle \mathcal{B}_0 \cup \mathcal{B}_2 \rangle$.
\end{proof}

\subsection{An algorithm for computing basis of $\Omega_3$}\label{sec:algorithm}

In this subsection, we present an explicit algorithm for computing a basis of $\Omega_3(G)$.

\medskip
\noindent\textbf{Input:} A directed graph $G = (V,E)$.

\noindent\textbf{Output:} A basis of the vector space $\Omega_3(G)$.

\medskip
\noindent\textbf{Algorithm.}

\begin{enumerate}

\item[(I)] \textbf{Local computation for fixed $(a,b)$.}

For each ordered pair $(a,b) \in V(G)\times V(G)$, compute a basis $B_{a,b}$ of $\Omega_3^{(a,b)}(G)$ as follows:

\begin{enumerate}

\item[(i)] Define
$A := \{ i \in V(G) \mid a \to i \}\setminus\{b\},  B := \{ j \in V(G) \mid j \to b \}\setminus\{a\}.$

Decompose
$A \cap B,  A \setminus B, B \setminus A.$

Construct the bipartite graph
$H : A \setminus B \longrightarrow B \setminus A,$
whose edges are all arrows
$i \to j  \text{ with }  i \in A \setminus B, \; j \in B \setminus A.$
When counting connected components and cycles,regard $H$ as an undirected graph.

\item[(ii)] \textbf{Generators without terminal elements.}

Compute the connected components $H = H_1 \sqcup \cdots \sqcup H_t$ of $H$.

Then compute a spanning forest $F$ of $H$.

For each edge $e \in E(H) \setminus E(F)$, determine the fundamental cycle 
$C_e=i_0 j_0 \cdots i_{m-1} j_{m-1} i_0  (i_k \in A \setminus B, j_k \in B \setminus A,)$
and construct the alternating sum 
$\omega_{C_e} = e_{a i_0 j_0 b} - e_{a i_1 j_0 b} + e_{a i_1 j_1 b} - \cdots + e_{a i_{m-1} j_{m-1} b} - e_{a i_0 j_{m-1} b}$
as in Section \ref{sec:basis1}.

Let 
$\mathcal{B}_0 := \{ \omega_{C_e} : e\in E(H) \setminus E(F) \}.$

\item[(iii)] \textbf{Generators with one terminal element.}

For each directed edge
$(i , j) \in E\bigl( N^{+}(a) \cap \bigl( N^{-}(b) \cup \{ b \} \bigr), \bigl( N^{+}(a) \cup \{ a \} \bigr) \cap N^{-}(b) \bigr),$
add the generator $e_{aijb}$.

Let 
$\mathcal{B}_1 := \{ e_{aijb} \mid (i , j) \in E\bigl( N^{+}(a) \cap \bigl( N^{-}(b) \cup \{ b \} \bigr), \bigl( N^{+}(a) \cup \{ a \} \bigr) \cap N^{-}(b) \bigr) \}.$

\item[(iv)] \textbf{Generators with two terminal elements.}

Let
$H = H_1 \sqcup \cdots \sqcup H_t$
be the connected components of $H$, where
$H_k = H[(A \setminus B)_k \cup (B \setminus A)_k], \quad (A \setminus B)_k \ \subseteq A \setminus B, \quad (B \setminus A)_k \ \subseteq B \setminus A$.

For each component $H_k$, define
$S_k = \Delta_k^1 \cup \Delta_k^2,$
where
$\Delta_k^1 = E \bigl( (A\setminus B)_k,\ \bigl( N^{+}(a) \cup \{ a \} \bigr) \cap N^{-}(b) \bigr),$
$\Delta_k^2 = E \bigl(  N^{+}(a) \cap \bigl( N^{-}(b) \cup \{ b \} \bigr),\  (B\setminus A)_k \bigr),$

If $|S_k| = n_k \ge 2$, construct $n_k-1$ independent generators of trapezohedral type. 

Define $\omega_s^k$ as in Section \ref{sec:structureomega3},let
$\mathcal{B}_2 = \bigcup_{k=1}^{t}\{ \omega_s^k \mid 1 \le s \le n_k-1 \}.$

\item[(v)] Set
$\mathcal{B}_{a b} := \mathcal{B}_0 \cup \mathcal{B}_1 \cup \mathcal{B}_2.$

\end{enumerate}

\item[(II)] \textbf{Output: global basis construction.}

Define
$\mathcal{B} := \bigcup_{a,b \in V(G)} \mathcal{B}_{a b}.$ The set $\mathcal{B}$ forms a basis of $\Omega_3(G)$.

%\item[(III)] \textbf{Output.}

\end{enumerate}

\begin{proposition}
    The algorithm has time complexity $O(|V|^5)$.
\end{proposition}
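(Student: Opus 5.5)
The plan is to bound the cost of the local computation (I) for a single ordered pair $(a,b)$ by $O(|V|^3)$ and then multiply by the $|V|^2$ pairs. Throughout, $G$ is stored as an adjacency matrix, so each test $u\to v$ costs $O(1)$ and each neighbourhood $N^\pm(v)$ is listed in $O(|V|)$.

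\textbf{Step (i).} The sets $A$, $B$, $A\cap B$, $A\setminus B$, $B\setminus A$ are obtained in $O(|V|)$ by marking vertices. The bipartite graph $H=\operatorname{Ind}(A\setminus B,B\setminus A)$ is built by testing every pair $(i,j)$ with $i\in A\setminus B$ and $j\in B\setminus A$. This costs $O(|V|^2)$, and $|E(H)|=O(|V|^2)$.

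\textbf{Step (ii).} The components $H_1,\dots,H_t$ and a spanning forest $F$ are found by one BFS or DFS in $O(|V(H)|+|E(H)|)=O(|V|^2)$. Root each tree of $F$ and record parents and depths. For each non-tree edge $e$, the fundamental cycle $C_e$ is read off by climbing from both endpoints to their lowest common ancestor, which takes $O(|V|)$ steps. Writing $\omega_{C_e}$ with its alternating signs also costs $O(|V|)$. There are at most $|E(H)|=O(|V|^2)$ non-tree edges, so this step costs $O(|V|^3)$.

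\textbf{Step (iii).} The two vertex sets $N^+(a)\cap(N^-(b)\cup\{b\})$ and $(N^+(a)\cup\{a\})\cap N^-(b)$ are computed in $O(|V|)$. Scanning all ordered pairs between them costs $O(|V|^2)$ and yields $\mathcal{B}_1$.

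\textbf{Step (iv).} Label each vertex of $H$ by its component index from step (ii). The sets $\Delta_k^1$ and $\Delta_k^2$ are then collected for all $k$ at once by scanning the edges from $A\setminus B$ into $(N^+(a)\cup\{a\})\cap N^-(b)$ and from $N^+(a)\cap(N^-(b)\cup\{b\})$ into $B\setminus A$. Each such edge is bucketed by the component label of its endpoint in $H$, for $O(|V|^2)$ in total. The quantity $\sum_k n_k$ is $O(|V|^2)$.

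For each $\omega^k_s$, we need an undirected path in $H_k$ between the endpoint in $H$ of $(i^k_0,j^k_0)$ and the endpoint in $H$ of $(i^k_s,j^k_s)$. The tree path in $F$ serves, and it is found with the same LCA climb in $O(|V|)$. The alternating sum from Lemma \ref{lem:image2} or Lemma \ref{lem:image3} is then written down in $O(|V|)$. The total cost is $O(\sum_k n_k\cdot|V|)=O(|V|^3)$.

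\textbf{Conclusion and main obstacle.} Summing, each pair $(a,b)$ costs $O(|V|^3)$. Over the $|V|^2$ ordered pairs, and including the union in step (II), the total is $O(|V|^5)$.

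The main obstacle is making sure that outputting the basis vectors, not just counting them, stays within $O(|V|^3)$ per pair. Each vector of $\mathcal{B}_0\cup\mathcal{B}_2$ has support of size $O(|V|)$, because a cycle or path in the forest-plus-one-edge has at most $|V(H)|+1$ edges. Their number per pair is at most $O(|V|^2)$ by Corollary \ref{cor:b0} and Corollary \ref{cor:b2}. So the output size is $O(|V|^3)$, which is exactly the bound we need. To make this rigorous, I would use the parent pointers of $F$ so that every path extraction is linear in its length rather than requiring a fresh search.
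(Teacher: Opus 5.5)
Your proposal is correct and follows the same route as the paper: you bound the cost for each ordered pair $(a,b)$ by $O(|V|^3)$ and multiply by the $|V|^2$ pairs. Within a pair, the sets cost $O(|V|)$ and $\mathcal{B}_1$ costs $O(|V|^2)$; there are $O(|V|^2)$ fundamental cycles and $\sum_k n_k = O(|V|^2)$ connecting paths, each written in $O(|V|)$. Your parent-pointer/LCA extraction and your explicit use of only the $n_k-1$ pairs anchored at $(i^k_0,j^k_0)$ make precise two points the paper states loosely: it says it ``enumerates all shortest paths'' during Prim's algorithm, and it speaks of ``every pair of edges'' in $\Delta^1_k\cup\Delta^2_k$, which taken literally would cost more.
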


\begin{proof}
    The algorithm proceeds by summing over all ordered pairs $(a,b) \in V \times V$. Thus it suffices to consider a fixed pair $a, b$ and show that a basis for $\Omega_3^{(a,b)}(G)$ can be computed in $O(|V|^3)$ time. 
    
    First, decomposing $V(G)$ into $A \cap B$, $A \setminus B$, and $B \setminus A$ can be done in $O(|V|)$ time. Finding a basis in $\mathcal{B}_1$, equivalent to enumerating all edges in $G[A \cap B]$ requires $O(|V|^2)$ time, as we need to check for each pair of vertices in $A \cap B$ whether they are connected by an edge.

    Second, to find a basis for $\mathcal{B}_0$, we construct the graph $H = \operatorname{Ind}(A \setminus B, B \setminus A)$, which can be done in $O(|V|^2)$ time. Computing its connected components via BFS or DFS takes $O(|V| + |E|) \leq O(|V|^2)$ time. We then compute spanning trees for all components of $H$ using Prim's algorithm, also in $O(|V|^2)$ time. During the execution of Prim's algorithm, we can simultaneously enumerate all shortest paths between every pair of vertices within each component. For each edge not belonging to any spanning tree, we obtain a corresponding generator in $\mathcal{B}_0$. Since there are at most $O(|V|^2)$ such edges and writing down each generator takes $O(|V|)$ time, this step requires $O(|V|^3)$ time in total.

    Third, to find a basis for $\mathcal{B}_2$, we enumerate all edges in each $\Delta_k^1$ and $\Delta_k^2$, which takes $O(|V|^2)$ time. For every pair of edges in $\Delta_k^1 \cup \Delta_k^2$, writing down the corresponding generator requires $O(|V|)$ time. Therefore, this step has a total time complexity of $O(|V|^3)$.

\end{proof}

%\section{Concluding Remarks}


\begin{thebibliography}{99}

\bibitem{chen2023path}
D. Chen, J. Liu, J. Wu, G.-W. Wei, F. Pan, and S.-T. Yau,
\emph{Path topology in molecular and materials sciences},
J. Phys. Chem. Lett. \textbf{14} (2023), 954--964.

\bibitem{di2024path}
S. Di, S. O. Ivanov, L. Mukoseev, and M. Zhang,
\emph{On the path homology of Cayley digraphs and covering digraphs},
J. Algebra \textbf{653} (2024), 156--199.

\bibitem{fu2024path}
X. Fu and S. O. Ivanov,
\emph{Path homology of digraphs without multisquares and its comparison with homology of spaces},
arXiv:2407.17001.

\bibitem{grigoryan2022advances}
A. Grigor'yan,
\emph{Advances in path homology theory of digraphs},
2022.

\bibitem{grigoryan_hodge}
A. Grigor'yan,
\emph{Path homology and Hodge Laplacian on digraphs},
\url{https://www.math.uni-bielefeld.de/~grigor/hodge2.pdf}

\bibitem{GrigorYan2019homology}
A. Grigor'yan, R. Jimenez, Yu. Muranov, and S.-T. Yau,
\emph{Homology of path complexes and hypergraphs},
Topol. Appl. \textbf{267} (2019), 106877.

\bibitem{GrigorYan2013homologies}
A. Grigor'yan, Y. Lin, Yu. Muranov, and S.-T. Yau,
\emph{Homologies of path complexes and digraphs},
arXiv:1207.2834v4 (2013).

\bibitem{GrigorYan2014homotopy}
A. Grigor'yan, Y. Lin, Yu. Muranov, and S.-T. Yau,
\emph{Homotopy theory for digraphs},
Pure Appl. Math. Q. \textbf{10} (2014), no. 4, 619--674.

\bibitem{GrigorYan2020path}
A. Grigor'yan, Y. Lin, Yu. Muranov, and S.-T. Yau,
\emph{Path complexes and their homologies},
J. Math. Sci. \textbf{248} (2020), no. 5, 564--599.

\bibitem{grigoryan2025eigenvalues}
A. Grigor'yan, Y. Lin, S.-T. Yau, and H. Zhang,
\emph{Eigenvalues of the Hodge Laplacian on digraphs},
Comm. Anal. Geom. \textbf{33} (2025), no. 4, 981--1023.

\begin{comment}
    \bibitem{GrigorYan2020torsions}
A. Grigor'yan, Y. Lin, and S.-T. Yau,
\emph{Analytic and Reidemeister torsions of digraphs and path complexes},
preprint (2020).

\bibitem{GrigorYanPacific}
A. Grigor'yan and Yu. Muranov,
\emph{On homology theories of cubical digraphs},
Pacific J. Math., to appear.
\end{comment}


\bibitem{GrigorYan2018multigraphs}
A. Grigor'yan, Yu. Muranov, V. Vershinin, and S.-T. Yau,
\emph{Path homology theory of multigraphs and quivers},
Forum Math. \textbf{30} (2018), no. 5, 1319--1337.

\bibitem{GrigorYan2014graphs}
A. Grigor'yan, Yu. Muranov, and S.-T. Yau,
\emph{Graphs associated with simplicial complexes},
Homology Homotopy Appl. \textbf{16} (2014), no. 1, 295--311.

\begin{comment}
    \bibitem{GrigorYan2015cohomology}
A. Grigor'yan, Yu. Muranov, and S.-T. Yau,
\emph{Cohomology of digraphs and (undirected) graphs},
Asian J. Math. \textbf{19} (2015), 887--932.

\bibitem{GrigorYan2016hochschild}
A. Grigor'yan, Yu. Muranov, and S.-T. Yau,
\emph{On a cohomology of digraphs and Hochschild cohomology},
J. Homotopy Relat. Struct. \textbf{11} (2016), no. 2, 209--230.
\end{comment}


\bibitem{GrigorYan2017kunneth}
A. Grigor'yan, Yu. Muranov, and S.-T. Yau,
\emph{Homologies of digraphs and K\"unneth formulas},
Comm. Anal. Geom. \textbf{25} (2017), no. 5, 969--1018.

\bibitem{ivanov2024simplicial}
S. O. Ivanov and F. Pavutnitskiy,
\emph{Simplicial approach to path homology of quivers, marked categories, groups and algebras},
J. London Math. Soc. \textbf{109} (2024), no. 1, e12812.

\bibitem{li2025singular}
J. Li, Y. Muranov, J. Wu, and S.-T. Yau,
\emph{On singular homology theories of digraphs and quivers},
J. Combin., to appear (2025).

\bibitem{lin2021discrete}
Y. Lin, C. Wang, and S.-T. Yau,
\emph{Discrete Morse theory on digraphs},
Pure Appl. Math. Q. \textbf{17} (2021), no. 5, 1711--1737.

\bibitem{liu2023neighborhood}
J. Liu, D. Chen, F. Pan, and J. Wu,
\emph{Neighborhood path complex for the quantitative analysis of the structure and stability of carbonanes},
J. Comput. Biophys. Chem. \textbf{22} (2023), no. 4, 503--511.

\bibitem{liu2023persistent}
R. Liu, X. Liu, and J. Wu,
\emph{Persistent path-spectral (PPS) based machine learning for protein–ligand binding affinity prediction},
J. Chem. Inf. Model. \textbf{63} (2023), 1066--1075.

\bibitem{tang2025minimal}
X. Tang and S.-T. Yau,
\emph{Minimal path and acyclic model in the path complex},
Comm. Anal. Geom. \textbf{33} (2025), no. 2, 275--342;
arXiv:2208.14063v2.

\bibitem{tang2024cellular}
X. Tang and S.-T. Yau,
\emph{The cellular homology of digraphs},
arXiv:2402.05682v2, accepted by Asian J. Math.

\end{thebibliography}
\end{document}